\renewcommand{\eqref}[1]{\hyperref[#1]{(\ref{#1})}}
\newlist{enumlist}{enumerate}{2}
\setlist[enumlist,1]{labelindent=0cm,label=\arabic*.,ref=\arabic*,labelwidth=2.5ex,labelsep=0.5ex,leftmargin=3ex,align=left,topsep=0.5ex,itemsep=1ex,parsep=1ex}
\setlist[enumlist,2]{labelindent=0cm,label=\theenumlisti.\arabic*.,ref=\arabic*,labelwidth=5ex,labelsep=0.5ex,leftmargin=5.5ex,align=left,topsep=0.5ex,itemsep=1ex,parsep=1ex}
\newlist{itemlist}{itemize}{2}
\setlist[itemlist,1]{labelindent=0cm,label=$\bullet$,labelwidth=2.5ex,labelsep=0.5ex,leftmargin=3ex,align=left,topsep=0.5ex,itemsep=1ex,parsep=1ex}
\setlist[itemlist,2]{labelindent=0cm,label=$\circ$,labelwidth=2.5ex,labelsep=0.5ex,leftmargin=3ex,align=left,topsep=0.5ex,itemsep=1ex,parsep=1ex}
\numberwithin{equation}{section}
\theoremstyle{definition}\newtheorem{definition}{Definition}[section]
\newtheorem*{definition*}{Definition}
\newtheorem{letterdef}{Definition}
\newtheorem{remark}[definition]{Remark}
\newtheorem*{example*}{Example}
\newtheorem*{examples*}{Examples}}
\newtheorem{proposition}[definition]{Proposition}
\newtheorem{lemma}[definition]{Lemma}
\newtheorem{theorem}[definition]{Theorem}
\newtheorem{corollary}[definition]{Corollary}
\newtheorem{letterthm}[letterdef]{Theorem}
\newtheorem{lettercor}[letterdef]{Corollary}
\theoremstyle{definition}}
\renewcommand{\Re}{\operatorname{Re}}
\renewcommand{\Im}{\operatorname{Im}}
\newcommand{\C}{\mathbb{C}}
\newcommand{\eps}{\varepsilon}
\newcommand{\al}{\alpha}
\newcommand{\be}{\beta}
\newcommand{\ot}{\otimes}
\newcommand{\Z}{\mathbb{Z}}
\newcommand{\vphi}{\varphi}
\newcommand{\cO}{\mathcal{O}}
\newcommand{\id}{\mathord{\text{\rm id}}}
\newcommand{\om}{\omega}
\newcommand{\N}{\mathbb{N}}
\newcommand{\ovt}{\mathbin{\overline{\otimes}}}
\newcommand{\Om}{\Omega}
\newcommand{\si}{\sigma}
\newcommand{\R}{\mathbb{R}}
\newcommand{\F}{\mathbb{F}}
\newcommand{\cZ}{\mathcal{Z}}
\newcommand{\Ad}{\operatorname{Ad}}
\newcommand{\cG}{\mathcal{G}}
\newcommand{\cK}{\mathcal{K}}
\newcommand{\cF}{\mathcal{F}}
\newcommand{\actson}{\curvearrowright}
\newcommand{\cB}{\mathcal{B}}
\newcommand{\cW}{\mathcal{W}}
\newcommand{\cU}{\mathcal{U}}
\newcommand{\Ker}{\operatorname{Ker}}
\newcommand{\cR}{\mathcal{R}}
\newcommand{\cV}{\mathcal{V}}
\newcommand{\cS}{\mathcal{S}}
\newcommand{\pr}{\operatorname{pr}}
\newcommand{\Stab}{\operatorname{Stab}}
\newcommand{\SL}{\operatorname{SL}}
\newcommand{\SO}{\operatorname{SO}}
\newcommand{\Q}{\mathbb{Q}}
\newcommand{\PSL}{\operatorname{PSL}}
\newcommand{\PSO}{\operatorname{PSO}}
\newcommand{\Gtil}{\widetilde{G}}
\newcommand{\Gammatil}{\widetilde{\Gamma}}
\newcommand{\omtil}{\widetilde{\omega}}
\newcommand{\bH}{\mathbb{H}}
\newcommand{\Omtil}{\widetilde{\Omega}}
\newcommand{\Ptil}{\widetilde{P}}
\newcommand{\Spin}{\operatorname{Spin}}
\newcommand{\GL}{\operatorname{GL}}
\newcommand{\cT}{\mathcal{T}}
\newcommand{\Ind}{\operatorname{Ind}}
\newcommand{\Ctil}{\widetilde{C}}
\newcommand{\resprod}{\sideset{}{'}\prod}
\begin{document}
\begin{center}
{\boldmath\LARGE\bf Superrigidity for dense subgroups of Lie groups\vspace{0.5ex}\\
and their actions on homogeneous spaces}

\bigskip

{\sc by Daniel Drimbe\footnote{\noindent KU~Leuven, Department of Mathematics, Leuven (Belgium).\\ E-mails: daniel.drimbe@kuleuven.be and stefaan.vaes@kuleuven.be.}\textsuperscript{,}\footnote{\noindent D.D.\ holds the postdoctoral fellowship fundamental research 12T5221N of the Research Foundation -- Flanders.} and Stefaan Vaes\textsuperscript{1,}\footnote{S.V.\ is supported by FWO research project G090420N of the Research Foundation Flanders and by long term structural funding~-- Methusalem grant of the Flemish Government.}}

\end{center}

\begin{abstract}\noindent
An essentially free group action $\Gamma \actson (X,\mu)$ is called W$^*$-superrigid if the crossed product von Neumann algebra $L^\infty(X) \rtimes \Gamma$ completely remembers the group $\Gamma$ and its action on $(X,\mu)$.
We prove W$^*$-superrigidity for a class of infinite measure preserving actions, in particular for natural dense subgroups of isometries of the hyperbolic plane. The main tool is a new cocycle superrigidity theorem for dense subgroups of Lie groups acting by translation. We also provide numerous countable type II$_1$ equivalence relations that cannot be implemented by an essentially free action of a group, both of geometric nature and through a wreath product construction.
\end{abstract}

\section{Introduction}

Popa's deformation/rigidity theory \cite{Pop06b} provided a wealth of rigidity theorems for probability measure preserving (pmp) group actions $\Gamma \actson (X,\mu)$, both from the von Neumann algebra and the orbit equivalence point of view. The most extreme form of rigidity, usually referred to as W$^*$-superrigidity, requires that both the group $\Gamma$ and its action $\Gamma \actson (X,\mu)$ can be entirely recovered from the ambient crossed product II$_1$ factor $L^\infty(X) \rtimes \Gamma$, see Definition \ref{def.superrigid}. A recent survey of such rigidity theorems in a pmp context can be found in \cite{Ioa18}.

Beyond the pmp setting, there are basically no known W$^*$-superrigidity theorems. As far as we know, \cite[Proposition D]{Vae13} provides the only known W$^*$-superrigid actions that are not pmp, but these examples are very much ad hoc. In this paper, we prove that several natural families of infinite measure preserving actions are W$^*$-superrigid, including many natural isometric actions on the hyperbolic plane. Note here that also \cite[Theorems C and D]{Iso19} provides W$^*$-superrigidity beyond the type II$_1$ setting, but is entirely noncommutative, dealing with group actions on type~III factors, rather than measure spaces.

To prove W$^*$-superrigidity of an essentially free ergodic action $\Gamma \actson (X,\mu)$, one needs to prove at the same time that $L^\infty(X) \rtimes \Gamma$ has a unique (group measure space) Cartan subalgebra, up to unitary conjugacy, and that $\Gamma \actson (X,\mu)$ is orbit equivalence (OE) superrigid (see Section \ref{sec.prelim-OE} for basic terminology). This makes W$^*$-superrigidity a rare and difficult to establish phenomenon.

There are by now several orbit equivalence and cocycle superrigidity theorems for infinite measure preserving actions. We mention in particular \cite[Theorems 1.3 and 1.5]{PV08}, \cite[Theorems B and C]{Ioa14} and \cite[Theorem B and Corollary D]{GITD16}. None of these were compatible with the existing uniqueness theorems for Cartan subalgebras proven in, among others, \cite{PV11,PV12,HV12,Ioa12,BDV17}.

The main goal of this paper is to prove OE- and W$^*$-superrigidity for actions $\Gamma \actson G/P$ of dense subgroups $\Gamma$ of a Lie group $G$ acting on the homogeneous space $G/P$. Before formulating a more conceptual result, we provide a sharp dichotomy for dense subgroups of isometries of the hyperbolic plane $\bH^2$. We realize $\bH^2 = \{z \in \C \mid \Im z > 0\}$ as the upper half plane and its group of orientation preserving isometries as $\PSL(2,\R)$ acting by fractional transformations:
$$\begin{pmatrix} a & b \\ c & d \end{pmatrix} \cdot z = \frac{a z + b}{cz + d} \; .$$

\begin{letterthm}\label{thm.main-hyperbolic-plane}
Let $\cS$ be a finite set of prime numbers. Define $\Gamma = \PSL(2,\Z[\cS^{-1}])$. Consider the isometric action $\Gamma \actson \bH^2$.
\begin{enumlist}
\item If $\cS = \emptyset$, as is well known, the action $\Gamma \actson \bH^2$ admits a fundamental domain.
\item If $|\cS|=1$, the action $\Gamma \actson \bH^2$ is strongly ergodic, of type II$_\infty$ and the associated orbit equivalence relation is treeable. In particular, the action $\Gamma \actson X$ does not satisfy any cocycle superrigidity or OE/W$^*$-superrigidity property.
\item If $|\cS| \geq 2$, the action $\Gamma \actson \bH^2$ is strongly ergodic, of type II$_\infty$, cocycle superrigid with countable target groups, OE-superrigid and W$^*$-superrigid (in the sense of Definition \ref{def.superrigid}).
\end{enumlist}
\end{letterthm}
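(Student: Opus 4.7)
Via strong approximation, $\Gamma = \PSL(2,\Z[\cS^{-1}])$ embeds diagonally as a lattice in $G_\cS := \PSL(2,\R) \times \prod_{p\in\cS}\PSL(2,\Q_p)$, and the isometric action $\Gamma \actson \bH^2$ is conjugate to the left-translation action of $\Gamma$ on $G_\cS/H_\cS$ where $H_\cS := \PSO(2) \times \prod_{p\in\cS}\PSL(2,\Q_p)$. The hyperbolic area corresponds to the $G_\cS$-invariant measure on this coset space. Case 1 is then just the classical fundamental domain for $\PSL(2,\Z)$ acting on the upper half plane.

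For case 2, write $\cS = \{p\}$ and pass to the stably isomorphic dual picture: the orbit equivalence relation $\cR$ of $\Gamma \actson \bH^2$ becomes the one induced by the right $H_\cS$-action on the finite-measure space $\Gamma\backslash G_\cS$. Strong ergodicity follows from Howe--Moore applied to the noncompact simple factor $\PSL(2,\Q_p) \subset H_\cS$ acting on $\Gamma\backslash G_\cS$, and type II$_\infty$ is immediate because the hyperbolic area is infinite. For treeability, absorbing the compact factor $\PSO(2)$ reduces $\cR$ to a relation generated by a free action of $\PSL(2,\Q_p)$, which acts on its Bruhat--Tits $(p+1)$-regular tree with compact vertex stabilizers; a standard construction à la Gaboriau then produces a measurable treeing. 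Since treeable relations have the Haagerup property, any cocycle superrigidity with countable target groups fails, and so do OE- and W$^*$-superrigidity.

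For case 3 ($|\cS| \geq 2$), $\Gamma$ now sits as a dense subgroup of a Lie group with at least three noncompact simple factors (one real and at least two non-archimedean), a genuinely higher-rank configuration. The plan is to invoke the new cocycle superrigidity theorem announced in the abstract, obtaining that $\Gamma \actson G_\cS/H_\cS$ is cocycle superrigid with countable target groups. Standard arguments combining cocycle superrigidity with the rigidity of the ambient Lie group then promote this to OE-superrigidity. For W$^*$-superrigidity, I would combine OE-superrigidity with a uniqueness theorem for the group-measure-space Cartan subalgebra of $L^\infty(\bH^2) \rtimes \Gamma$ in the type II$_\infty$ setting, adapting the Popa--Vaes, Houdayer--Vaes and Boutonnet--Drimbe--Vaes strategies to the present $\cS$-arithmetic framework. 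The main obstacle, explicitly flagged in the introduction, is precisely this last compatibility: the existing Cartan-uniqueness theorems live in the II$_1$ world and are stated under hypotheses incompatible with translation actions on $G_\cS/H_\cS$. The bulk of the work will therefore be (i) the new cocycle superrigidity theorem for dense subgroups of Lie groups, strong enough to yield countable targets, and (ii) a matching unique-Cartan theorem that is both semifinite and applicable to the actions covered by (i).
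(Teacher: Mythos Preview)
Your overall architecture matches the paper's: $S$-arithmetic lattice in $G_\cS$, measure-equivalence flip to the $p$-adic side for case~2, Theorem~\ref{thm.main-cocycle-superrigid} for case~3, and Cartan uniqueness on top. But there is one genuine missing step and two points where your plan misfires.

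The missing step is in case~3. First a slip: $\Gamma$ is a \emph{lattice} in $G_\cS$, not dense; it is dense only in the real factor $G=\PSL(2,\R)$. Theorem~\ref{thm.main-cocycle-superrigid} is applied with this $G$ and with $H=\prod_{p\in\cS}\PSL(2,\Q_p)$, and it yields only \emph{essential} cocycle superrigidity of the translation action $\Gamma\actson G$, because $\PSL(2,\R)$ is not simply connected. You then need a separate argument to descend from $\Gamma\actson G$ to $\Gamma\actson G/K$ with $K=\PSO(2)$. This is the paper's Proposition~\ref{prop.cocycle-superrigid-homogeneous-spaces}, and it relies on the nontrivial hypothesis that $\pi^{-1}(K)$ is connected in the universal cover $\Gtil$ (true here because the fundamental group of $\PSO(2)$ surjects onto that of $\PSL(2,\R)$). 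Without this, you cannot conclude plain cocycle superrigidity for $\Gamma\actson\bH^2$; your proposal jumps straight from the abstract theorem to the homogeneous-space action without addressing why the essential version upgrades.

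On Cartan uniqueness you overestimate the difficulty. No new semifinite theorem is needed: via the same measure-equivalence flip, $L^\infty(\bH^2)\rtimes\Gamma$ is stably isomorphic to $L^\infty(Z)\rtimes\cG$ where $\cG=\prod_{p\in\cS}\PSL(2,\Q_p)$ acts on the finite-volume space $Z=\Gamma\backslash(\bH^2\times\cG)$. Each factor has $\Q_p$-rank one, so \cite{BDV17} applies off the shelf. Finally, two minor imprecisions in case~2: Howe--Moore gives mixing, not spectral gap; strong ergodicity comes from Clozel's $\tau$-conjecture \cite{Clo02}. And ``treeable $\Rightarrow$ Haagerup $\Rightarrow$ no cocycle superrigidity'' is not the right chain for countable targets; the paper (Section~\ref{sec.treeable}) instead uses Hjorth's result that a nonamenable treeable measure-preserving relation is stably implemented by a free $\F_n$-action, from which failure of OE- and cocycle superrigidity follow.
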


Point~2 of Theorem \ref{thm.main-hyperbolic-plane} has to be compared with the results in \cite{Ioa14}, where several strong rigidity theorems are proven for actions $\Gamma \actson G/P$ where $\Gamma < G$ is a dense subgroup of a connected Lie group and $P < G$ is a closed connected subgroup. In particular in \cite[Corollary I]{Ioa14}, under the assumption of strong ergodicity for the translation action $\Gamma \actson G$, it is proven that two such actions $\Gamma_i \actson G_i/P_i$ can only be stably orbit equivalent if they are conjugate. So while the action $\PSL(2,\Z[1/p]) \actson \bH^2$ is treeable and thus orbit equivalent to a huge variety of group actions, it follows from \cite[Corollary I]{Ioa14} that these actions are not stably orbit equivalent among each other if we vary the prime $p$.

When $\cS$ is an infinite set of prime numbers, we still have that the action $\Gamma \actson \bH^2$ in Theorem~\ref{thm.main-hyperbolic-plane} is OE-superrigid, but we do not know if W$^*$-superrigidity holds in this case (see Proposition~\ref{prop.S-infinite}).

Our main tool to prove Theorem \ref{thm.main-hyperbolic-plane} is a new cocycle superrigidity theorem for dense subgroups $\Gamma < G$ of noncompact Lie groups and the action $\Gamma \actson G$ by translation. Inspired by \cite[Theorem B]{Ioa14} and \cite[Theorem A]{GITD16}, we introduce the following concept. Recall that a surjective group homomorphism $\pi : \Gtil \to G$ between locally compact second countable (lcsc) groups is called a covering homomorphism if $\pi$ is continuous, surjective, open and $\Ker \pi$ is a discrete subgroup of $\Gtil$.

\begin{letterdef}\label{def.essential-cocycle-superrigid}
A dense subgroup $\Gamma < G$ of a lcsc group is said to be \emph{essentially cocycle superrigid with countable targets} if for any measurable $1$-cocycle $\om : \Gamma \times G \to \Lambda$ for the translation action with values in an arbitrary countable group $\Lambda$, there exists an open subgroup $G_0 < G$ and a covering $\pi : \Gtil \to G_0$ such that with $\Gammatil = \pi^{-1}(\Gamma \cap G_0)$, the lifted $1$-cocycle
$$\omtil : \Gammatil \times \Gtil \to \Lambda : \omtil(g,x) = \om(\pi(g),\pi(x))$$
is cohomologous to a group homomorphism $\Gammatil \to \Lambda$.
\end{letterdef}

If $G$ is totally disconnected, then the notion of essential cocycle superrigidity is equivalent with \emph{virtual cocycle superrigidity}: for any measurable $1$-cocycle $\om : \Gamma \times G \to \Lambda$, there exists an open subgroup $G_0 < G$ such that the restriction of $\om$ to $(\Gamma \cap G_0) \times G_0$ is cohomologous to a group homomorphism. When $G$ is a simply connected Lie group, then essential cocycle superrigidity of a dense subgroup $\Gamma < G$ is equivalent with cocycle superrigidity: any measurable $1$-cocycle $\om : \Gamma \times G \to \Lambda$ is cohomologous to a group homomorphism. Moreover, such a plain cocycle superrigidity can only hold if $G$ is simply connected. All this is explained in Propositions \ref{prop.clarify-essential-cocycle-superrigid} and \ref{prop.cocycle-superrigid-homogeneous-spaces} below.

By \cite[Theorem B]{Ioa08}, whenever $\Gamma$ has property~(T) and $\Gamma < G$ is a dense embedding in a \emph{compact profinite} group, then $\Gamma < G$ is essentially cocycle superrigid. The same was proven in \cite[Theorem 5.21]{Fur09} for dense embeddings of property~(T) groups in arbitrary \emph{compact} groups and for cocycles with values in a torsion-free group $\Lambda$. By \cite[Theorem A]{DIP19}, essential cocycle superrigidity also holds for \emph{profinite} completions $\Gamma < G$ of certain irreducible lattices $\Gamma$. We prove in this paper the cocycle superrigidity theorem \ref{thm.main-cocycle-superrigid} below for dense subgroups $\Gamma$ of lcsc groups $G$. In the particular case when $G$ is compact, we recover the just mentioned results of \cite{Ioa08,Fur09,DIP19}.

But when $G$ is noncompact, much less is known up to now.
By \cite[Theorem B]{Ioa14}, if $G$ is a simply connected lcsc group and $\Gamma < G$ is a dense subgroup such that the translation action $\Gamma \actson G$ has property~(T) in the sense of Zimmer, then $\Gamma < G$ is cocycle superrigid. So far, this is the only known noncompact case of an essentially cocycle superrigid dense subgroup $\Gamma < G$.

The key theorem of this paper is Theorem \ref{thm.main-cocycle-superrigid} below, proving essential cocycle superrigidity for dense subgroups $\Gamma < G$ that are given by projecting irreducible lattices satisfying a strong ergodicity property. We discuss numerous concrete examples in Section \ref{sec.examples}, which in particular leads to a proof of Theorem \ref{thm.main-hyperbolic-plane}, but also leads to many other superrigidity results, in particular for dense subgroups of orientation preserving isometries of hyperbolic $n$-space and for dense subgroups of $\SL(n,\R)$ acting linearly on $\R^n$, see Theorem \ref{thm.list-Wstar-superrigid} and Corollary \ref{cor.superrigid-Rn}.

We also mention here that in \cite{GITD16}, several cocycle superrigidity theorems are proven for left-right translation actions $\Gamma \times \Lambda \actson G$, when $\Gamma,\Lambda$ are dense subgroups of a lcsc group $G$. Although several of our methods are inspired by \cite{GITD16}, and also by \cite{Ioa14,DIP19}, cocycle superrigidity for left-right translation actions is quite different from cocycle superrigidity for a translation action $\Gamma \actson G$. The reason is that such a left-right translation action $\Gamma \times \Lambda \actson G$ typically remains cocycle superrigid if we enlarge $\Gamma$ and $\Lambda$, and would often hold in cases where $\Gamma \cong \Lambda \cong \F_\infty$, while cocycle superrigidity for a translation action $\Gamma \actson G$ itself is necessarily more rare: adding a generic element to $\Gamma$ would give a group that is isomorphic with the free product $\Gamma * \Z$, for which cocycle superrigidity always fails.

\begin{letterthm}\label{thm.main-cocycle-superrigid}
Let $G$ and $H$ be lcsc groups with $H$ being compactly generated. Let $\Gamma < G \times H$ be a lattice and denote by $\pr : G \times H \to G$ the projection onto the first factor. Assume that $\pr(\Gamma) < G$ is dense. Make one of the following assumptions.
\begin{enumlist}
\item $H = H_1 \times H_2$ with $H_i \actson (G \times H)/\Gamma$ being strongly ergodic for $i=1$ and ergodic for $i=2$.
\item $H$ has property~(T).
\end{enumlist}
Then, $\pr(\Gamma) < G$ is essentially cocycle superrigid with countable target groups, in the sense of Definition \ref{def.essential-cocycle-superrigid}.
\end{letterthm}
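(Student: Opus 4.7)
The plan is to reduce the cocycle superrigidity problem for the translation action $\pr(\Gamma) \actson G$ to a cocycle superrigidity problem for the finite-measure $H$-action on the lattice quotient $X := \Gamma \backslash (G \times H)$, where the hypotheses of strong ergodicity or property~(T) apply directly.

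First, I lift the given cocycle $\omega : \pr(\Gamma) \times G \to \Lambda$ to the cocycle
\[
\omtil : \Gamma \times (G \times H) \to \Lambda, \qquad \omtil(\gamma,(g,h)) := \omega(\pr(\gamma),g),
\]
for the left translation action of $\Gamma < G \times H$. The decisive point is that $\omtil$ is independent of the $H$-coordinate, so the right translation of $H$ on $G \times H$ commutes with the skew-product $\Gamma$-action on $(G \times H) \times \Lambda$ determined by $\omtil$. Consequently, this skew product descends to an $H$-action on $X \times \Lambda$, which is itself a skew product determined by a Borel cocycle $\alpha : H \times X \to \Lambda$ built from $\omtil$ together with the Zimmer cocycle of a Borel fundamental domain for $\Gamma$ in $G \times H$.

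Next, I apply cocycle superrigidity to $\alpha$. In Case~2, this is Zimmer's theorem for property~(T) actions, in its version allowing countable targets. In Case~1, it requires a Popa-style argument in the spirit of \cite{Ioa14,GITD16}, using the strong ergodicity of $H_1 \actson X$ to produce a first untwisting, which is then upgraded to all of $H$ via the ergodicity of $H_2 \actson X$. Either way the output is a Borel $\phi : X \to \Lambda$ and a continuous homomorphism $\delta : H \to \Lambda$ satisfying $\alpha(h, x) = \phi(x \cdot h)^{-1} \delta(h) \phi(x)$; since $\Lambda$ is countable, the kernel $H_0 := \Ker \delta$ is an open subgroup of $H$.

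Finally, I transfer the untwisting back to the $G$-side. Lifting $\phi$ to a Borel function $\tilde\phi : G \times H \to \Lambda$ that satisfies a $\Gamma$-equivariance twisted by $\omtil$ and $\delta$, the cocycle $\omtil$ becomes cohomologous, via $\tilde\phi$, to the group homomorphism obtained by composing $\delta$ with the projection $\Gamma \to H$. Using that $\omtil$ does not depend on $h$, I argue that $\tilde\phi$ can be modified so as to depend only on the $G$-coordinate, which yields the required cohomology between $\omega$ and a homomorphism of the dense subgroup $\pr(\Gamma \cap (G \times H_0))$ of the open subgroup $G_0 := \pr(G \times H_0) < G$. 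The covering $\pi : \Gtil \to G_0$ reflects topological obstructions of $G_0$ (its fundamental group, in the Lie case) that are invisible from the dense subgroup alone; passing to $\Gtil$ is what allows the resulting homomorphism to be extended consistently to $\Gammatil$. I expect this last step to be the main obstacle: extracting a $G$-side cohomology from the $H$-side one, and arranging $\tilde\phi$ to be $h$-independent (possibly only after passing to the cover), will require a delicate combined use of the lattice structure of $\Gamma$ in $G \times H$ and the density of $\pr(\Gamma)$ in $G$.
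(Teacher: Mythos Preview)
Your reduction to the $H$-cocycle $\alpha : H \times X \to \Lambda$ on $X = \Gamma \backslash (G \times H)$ matches the paper's setup (the paper calls this cocycle $C$). The gap is in the step ``apply cocycle superrigidity to $\alpha$.'' Neither hypothesis implies that $\alpha$ is cohomologous to a homomorphism $H \to \Lambda$. There is no ``Zimmer theorem for property~(T) actions with countable targets'' in this generality: property~(T) of $H$ alone, without additional structure on the action (malleable, Bernoulli, profinite, \dots), does not yield cocycle superrigidity for $H \actson X$. Likewise, strong ergodicity of $H_1$ plus ergodicity of $H_2$ is not a known route to untwisting $\alpha$ directly; Popa-style arguments always pair rigidity with a deformation, and you have not identified one.

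What the hypotheses \emph{do} give, and what the paper exploits, is rigidity of $\alpha$ under the commuting $G$-action, which serves as the deformation. For $k \in G$ set $C_k(h,y) = C(h, k \cdot y)$; the family $(C_k)_{k \in G}$ is a continuous perturbation of $C = C_e$, and property~(T) of $H$ (resp.\ spectral gap of $H_1$ combined with compact generation of $H_2$) forces $C_k$ to be cohomologous to $C$ for $k$ in a neighborhood $\cU$ of $e$, via a unique small $\vphi_k$. The map $k \mapsto \vphi_k$ is continuous and satisfies a local cocycle identity on $\cU$; the covering $\pi : \Gtil \to G_0$ is needed precisely to globalize this \emph{local $G$-cocycle}, via Iwasawa's splitting theorem and a monodromy argument on the simply connected Lie factor. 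The resulting global $\Gtil$-cocycle extends $C$ to a $(\Gtil \times H)$-cocycle on $Y$, which is then trivially cohomologous to a homomorphism $\Gammatil \to \Lambda$. So the role of $G$ is to provide the deformation, not to receive an untwisting transported from the $H$-side; note also that your proposed $G_0 = \pr(G \times H_0)$ equals $G$ itself, whereas the actual open subgroup $G_0 < G$ in the proof comes from Iwasawa's theorem and has nothing to do with $\Ker \delta \subset H$.
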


The main novelty of Theorem \ref{thm.main-cocycle-superrigid} is to prove cocycle superrigidity for translation actions $\Gamma \actson G$ for noncompact groups $G$ and without relying on property~(T). This provides many new cases given by irreducible lattices in products of rank one groups. This is discussed in detail in Section~\ref{sec.examples}, but we already mention here that it follows from Theorem \ref{thm.main-cocycle-superrigid} that the dense subgroups $\SL(2,\Z[\cS^{-1}]) < \SL(2,\R)$ and $\SO(n,1,\Z[\cS^{-1}]) < \SO(n,1,\R)$ are essentially cocycle superrigid whenever $\cS$ is a set of prime numbers with $|\cS| \geq 2$. The same holds for the dense subgroups $\SL(2,\Z[\cO_K]) < \SL(2,\R)$ and $\SO(n,1,\Z[\cO_K]) < \SO(n,1,\R)$ where $\Q \subset K \subset \R$ is a real algebraic number field with $[K:\Q] \geq 4$ and $\cO_K$ is its ring of algebraic integers. A more precise formulation can be found in Propositions \ref{prop.p-adic} and \ref{prop.real}.

When $G$ is a \emph{compact profinite} group, then the first point of Theorem \ref{thm.main-cocycle-superrigid} is precisely \cite[Theorem A]{DIP19} and the second point is precisely cocycle superrigidity for profinite actions of property~(T) groups, as proven in \cite[Theorem B]{Ioa08}. When $G$ is a simply connected lcsc group, then the second point of Theorem \ref{thm.main-cocycle-superrigid} is a special case of \cite[Theorem B]{Ioa14}. All this is discussed in detail in Remark \ref{rem.link-earlier-work} and shows that the main novelty of Theorem \ref{thm.main-cocycle-superrigid} really consists of the first point, using spectral gap rigidity, for noncompact groups $G$.

When $\Gamma < G$ is essentially cocycle superrigid with countable targets, one can deduce OE-superrigidity results for the associated actions $\Gamma \actson G/P$ whenever $P < G$ is a closed subgroup. Depending on the nature of $P$, we may get plain OE-superrigidity (as in Theorem \ref{thm.main-hyperbolic-plane}, in the sense of Definition \ref{def.superrigid}), or virtual OE-superrigidity (up to taking quotients by normal subgroups, see Proposition \ref{prop.OE-superrigid-homogeneous}). As an illustrative example, we mention as a corollary the following OE- and W$^*$-superrigidity for pmp actions, which complements \cite[Corollary K]{Ioa14} dealing with the case $\PSL(n,\R)$, $n \geq 3$.

\begin{lettercor}\label{cor.OE-superrigid-homogeneous-lattice}
Let $G = \PSL(2,\R)$ and let $\Sigma < \PSL(2,\R)$ be a lattice. Let $\cS$ be a set of prime numbers with $|\cS| \geq 2$. Define $\Gamma = \PSL(2,\Z[\cS^{-1}])$. The action $\Gamma \actson G/\Sigma$ is essentially free, ergodic and pmp. If $\Lambda \actson (Y,\eta)$ is any essentially free ergodic pmp action, the following are equivalent.
\begin{enumlist}
\item The actions $\Gamma \actson G/\Sigma$ and $\Lambda \actson Y$ are stably orbit equivalent.
\item There exist a finite index subgroup $\Lambda_0 < \Lambda$ and a finite normal subgroup $\Sigma_0 \lhd \Lambda_0$ such that $\Lambda \actson Y$ is induced from $\Lambda_0 \actson Y_0$ and $\Lambda_0/\Sigma_0 \actson Y_0/\Sigma_0$ is conjugate to $\Gamma \actson G/\Sigma$.
\end{enumlist}
If $\cS$ is finite, these statements are moreover equivalent with:
\begin{enumlist}[resume]
\item The actions $\Gamma \actson G/\Sigma$ and $\Lambda \actson Y$ are stably W$^*$-equivalent.
\end{enumlist}
\end{lettercor}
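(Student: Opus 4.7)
The corollary is a particular case of the general OE-superrigidity statement for homogeneous space actions (Proposition~\ref{prop.OE-superrigid-homogeneous}), combined, for finite $\cS$, with a uniqueness-of-Cartan subalgebra result. The stated basic properties of $\Gamma \actson G/\Sigma$ (essentially free, ergodic, pmp) are standard: finite $G$-invariant Haar measure on $G/\Sigma$ gives pmp, density of $\Gamma$ in $G$ promotes $\Gamma$-invariance of measurable sets to $G$-invariance and hence ergodicity, and a countable-union-of-proper-algebraic-subvarieties argument inside $G$ handles essential freeness. The implication $(2) \Rightarrow (1)$ is routine: both induction from a finite-index subgroup and the quotient by a free action of a finite normal subgroup preserve the stable OE class. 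When $\cS$ is finite, these manipulations are also crossed-product isomorphisms, so $(2) \Rightarrow (3)$ is immediate.

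For $(1) \Rightarrow (2)$, the dense inclusion $\Gamma = \PSL(2,\Z[\cS^{-1}]) < G = \PSL(2,\R)$ is essentially cocycle superrigid with countable targets by Theorem~\ref{thm.main-cocycle-superrigid}: $\Gamma$ embeds as an irreducible lattice in $G \times H$ with $H = \prod_{p\in\cS}\PSL(2,\Q_p)$, and assumption~(1) of Theorem~\ref{thm.main-cocycle-superrigid} is verified by splitting off one $p$-adic factor and invoking Howe--Moore-style mixing (which gives strong ergodicity of a single factor and ergodicity of the complementary factor acting on $(G \times H)/\Gamma$). Given a stable orbit equivalence $\Gamma \actson G/\Sigma \sim_{\mathrm{SOE}} \Lambda \actson Y$, I extract the associated Zimmer $1$-cocycle $\omega : \Gamma \times G/\Sigma \to \Lambda$ and lift it along the principal $\Sigma$-bundle $G \to G/\Sigma$ to a $\Sigma$-invariant cocycle $\omega' : \Gamma \times G \to \Lambda$ for the translation action. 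Since $G$ is connected, the open subgroup produced by Theorem~\ref{thm.main-cocycle-superrigid} must equal $G$, so one obtains a covering $\pi : \Gtil \to G$ and a group homomorphism $\delta : \Gammatil \to \Lambda$ cohomologous to the lift $\omtil$. Feeding this into Proposition~\ref{prop.OE-superrigid-homogeneous} yields the induction-plus-finite-normal-subgroup-quotient structure in~(2).

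For finite $\cS$, the implication $(3) \Rightarrow (1)$ reduces to showing that $L^\infty(G/\Sigma)$ is the unique group measure space Cartan subalgebra of $M := L^\infty(G/\Sigma) \rtimes \Gamma$ up to unitary conjugacy: a stable W$^*$-equivalence then identifies the two Cartan subalgebras and therefore provides a stable orbit equivalence, after which $(1) \Rightarrow (2)$ closes the cycle. For finite $\cS$ with $|\cS| \geq 2$, $\Gamma$ is an irreducible lattice in the finite product of rank-one bi-exact groups $G \times H$, and the uniqueness-of-Cartan machinery from \cite{PV12,Ioa12,HV12,BDV17} (referenced earlier in the introduction) applies in this lattice-in-products framework to yield the needed uniqueness.

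The main obstacle lies in the reconstruction inside Proposition~\ref{prop.OE-superrigid-homogeneous}: one must show that the discrete central kernel of $\pi : \Gtil \to G$ is mapped by $\delta$ to a finite subgroup of $\Lambda$ (using essential freeness of $\Lambda \actson Y$), isolate the finite-index subgroup $\Lambda_0 < \Lambda$ together with the finite normal subgroup $\Sigma_0 \lhd \Lambda_0$, and identify $\Lambda_0/\Sigma_0 \actson Y_0/\Sigma_0$ with $\Gamma \actson G/\Sigma$. In the finite-$\cS$ case, a second nontrivial point is to verify that the cited unique-Cartan theorems truly cover the specific irreducible-lattice setup $\Gamma < G \times H$ at hand, so that the W$^*$-to-OE reduction goes through.
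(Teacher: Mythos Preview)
Your approach is essentially the same as the paper's: reduce to essential cocycle superrigidity of $\Gamma < G$, invoke Proposition~\ref{prop.OE-superrigid-homogeneous} for the equivalence $(1)\Leftrightarrow(2)$, and for finite $\cS$ appeal to a unique-Cartan theorem (the paper cites \cite[Theorem~1.3]{PV12} specifically) to close the loop $(3)\Rightarrow(1)$.

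One genuine gap: your claim that $\Gamma$ is an irreducible lattice in $G \times H$ with $H = \prod_{p\in\cS}\PSL(2,\Q_p)$ is only valid when $\cS$ is finite; for infinite $\cS$ this product is not locally compact, so Theorem~\ref{thm.main-cocycle-superrigid} does not apply directly. The paper handles this (inside Proposition~\ref{prop.p-adic}) by first taking a finite subset $\cS_1 \subset \cS$ with $|\cS_1| \geq 2$, establishing essential cocycle superrigidity for $\Gamma_1 = \PSL(2,\Z[\cS_1^{-1}])$, and then extending to $\Gamma$ via Proposition~\ref{prop.almost-normal-stability}, using that $g\Gamma_1 g^{-1}\cap\Gamma_1$ is dense in $G$ for every $g\in\Gamma$. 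You should insert this step.

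A minor point you glide over: Proposition~\ref{prop.OE-superrigid-homogeneous} only gives that $\Sigma_0 \actson Y_0$ admits a fundamental domain, not that $\Sigma_0$ is finite or that $[\Lambda:\Lambda_0]<\infty$. Both finiteness claims follow immediately from the pmp hypothesis on $\Lambda \actson Y$ (finite invariant measure forces finite index for the induction, and a free action with a fundamental domain on a finite measure space has finite group), but you should say so explicitly. Note also that $G=\PSL(2,\R)$ has trivial center, so the conclusion of Proposition~\ref{prop.OE-superrigid-homogeneous} reads exactly $\Gamma \actson G/\Sigma$ without further quotienting.
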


Note that in \cite[Corollary I]{Ioa14}, it was already proven that the actions $\Gamma \actson G/\Sigma$ appearing in Corollary \ref{cor.OE-superrigid-homogeneous-lattice} satisfy an OE-rigidity property among each other: if two such actions are stably orbit equivalent, then the corresponding sets of primes must be equal. We now prove that these actions are plainly OE-superrigid.

Another reason why OE-superrigid, infinite measure preserving actions $\Gamma \actson (X,\mu)$ are interesting, is the following. Assume that the action is essentially free and ergodic. Then for any Borel set $Y \subset X$ with $0 < \mu(Y) < +\infty$, the restriction of the orbit equivalence relation $\cR(\Gamma \actson X)$ to $Y$ gives a countable, ergodic, type II$_1$ equivalence relation $\cR$ with the property that none of its finite amplifications $\cR^t$, $0 < t < +\infty$, can be implemented by an essentially free group action. The question whether every countable, type II$_1$ equivalence relation can be implemented by an essentially free group action was posed in \cite{FM75} and answered negatively only much later in \cite{Fur98}.

Since then, there were found several constructions of equivalence relations $\cR$ such that none of the finite amplifications $\cR^t$ can be implemented by an essentially free group action: \cite[Theorem D]{Fur98}, \cite[Corollary 5.10]{Pop05b} and \cite[Theorem 4.1 and Proposition 7.1]{PV08}. Our Theorem \ref{thm.main-hyperbolic-plane} thus adds examples to this list. In the second part of this paper, we then prove in very high generality that such a non-implementation property is stable under taking left-right wreath products.

We first recall this wreath product construction. Assume that $\cR_0$ is a countable pmp equivalence relation on a standard probability space $(X_0,\mu_0)$. We do not assume that $\cR_0$ is ergodic, nor that the orbits of $\cR_0$ are infinite. Whenever $\Gamma$ is a countable group, we consider the product space $(X,\mu) = (X_0,\mu_0)^\Gamma$ and define the left-right wreath product equivalence relation $\cR = \cR_0 \wr_\Gamma (\Gamma \times \Gamma)$ on $(X,\mu)$ by $(x,y) \in \cR$ if and only if there exist a $(g,h) \in \Gamma \times \Gamma$ and a finite subset $\cF \subset \Gamma$ such that $x_{gkh} = y_k$ for all $k \in \Gamma \setminus \cF$ and $(x_{gkh},y_k) \in \cR_0$ for all $k \in \cF$. So, $\cR$ is generated by the product equivalence relation $\cR_0^{(\Gamma)}$ and the left-right Bernoulli action $\Gamma \times \Gamma \actson (X,\mu)$. In particular, whenever $\Gamma$ is an infinite group, the equivalence relation $\cR$ is ergodic.

\begin{letterthm}\label{thm.stable-non-implement}
Let $\cR_0$ be any nontrivial countable pmp equivalence relation on a standard probability space $(X_0,\mu_0)$ and let $\Gamma$ be any nonamenable group with infinite conjugacy classes. Let $\cR = \cR_0 \wr_\Gamma (\Gamma \times \Gamma)$ be the left-right wreath product equivalence relation. Let $t > 0$.

Then $\cR^t$ can be implemented by an essentially free group action if and only if $t \in \N$ and $\cR_0$ can be implemented by an essentially free group action.
\end{letterthm}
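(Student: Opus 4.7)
The ``if'' direction is constructive. Assuming $\cR_0$ is implemented by an essentially free pmp action $\Lambda_0 \actson (X_0,\mu_0)$, the restricted wreath product group $\Lambda_0^{(\Gamma)} \rtimes (\Gamma \times \Gamma)$ acts essentially freely on $(X_0^\Gamma,\mu_0^{\otimes\Gamma})$ via coordinatewise $\Lambda_0$-translations combined with the left-right Bernoulli shift, and the resulting orbit relation is exactly $\cR = \cR_0 \wr_\Gamma (\Gamma\times\Gamma)$. For $t = n \in \N$, the product of this action with a free transitive $\Z/n\Z$-action on $\{1,\dots,n\}$ implements $\cR^n$.

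For the converse, suppose $\Lambda \actson (Y,\eta)$ is an essentially free pmp action with orbit relation isomorphic to $\cR^t$. Write $M = L(\cR) = N^{\ovt\Gamma} \rtimes (\Gamma\times\Gamma)$, where $N = L(\cR_0)$ and $\Gamma\times\Gamma$ acts by the left-right Bernoulli shift. My plan proceeds by a deformation/rigidity analysis. First, I would establish that $M^t$ has a unique group-measure-space Cartan subalgebra up to unitary conjugacy, by adapting Popa--Vaes and Ioana-style spectral-gap rigidity to left-right wreath product algebras: nonamenability and icc of $\Gamma$ supply the needed spectral gap, while the Bernoulli shift provides Popa's malleable deformation. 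This forces $L^\infty(Y)$ to be unitarily conjugate inside $M^t$ to the amplification of the natural Cartan $L^\infty(X_0^\Gamma)$, yielding a measurable Zimmer $1$-cocycle $\omega : \cR^t \to \Lambda$ whose restriction to every $\cR^t$-class is a.e.\ a bijection.

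Next, I would restrict $\omega$ to the subrelation of $\cR^t$ coming from the amplified left-right Bernoulli action of $\Gamma \times \Gamma$ on $X_0^\Gamma$. By Popa-type cocycle superrigidity for Bernoulli-like actions of nonamenable icc groups (extended to the $\Gamma \times \Gamma$ left-right setting as in work of Popa--Vaes and Ioana), after a coboundary modification $\omega$ agrees on this Bernoulli subrelation with a group homomorphism $\pi : \Gamma\times\Gamma \to \Lambda$; essential freeness of $\omega$ on orbits then forces $\pi$ to be injective. Using equivariance of $\omega$ with respect to $\pi$ across the coordinates of $X_0^\Gamma$, the remaining cocycle data is entirely determined by the restriction of $\omega$ to the $\cR_0$-subrelation at a single coordinate $g = e$. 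This restriction furnishes an essentially free cocycle for $\cR_0$ into a subgroup $\Lambda_0 \subset \Lambda$, and hence an essentially free action $\Lambda_0 \actson (X_0,\mu_0)$ implementing $\cR_0$. The integrality $t \in \N$ should then drop out of an index/amplification count comparing the ``coordinatewise'' and ``Bernoulli'' parts of the $\Lambda$-action, showing that only integer amplifications are compatible with the required bijectivity of $\omega$ on orbits.

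The main obstacle I anticipate is the first step: Cartan-uniqueness for $M^t$ in a left-right wreath product with an \emph{arbitrary} base relation $\cR_0$ is not a plug-in of any existing theorem and requires a careful analysis of the quasi-normalizer of the natural Cartan, combined with spectral-gap and malleable-deformation arguments in the Bernoulli direction. A secondary difficulty is the precise form of cocycle superrigidity used (for arbitrary countable targets, not merely abelian ones) and the clean identification, at the end of the argument, of both essential freeness of the induced $\Lambda_0$-action and integrality of $t$ from the cocycle data.
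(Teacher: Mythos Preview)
Your first step---proving uniqueness of the group measure space Cartan subalgebra in $M^t = L(\cR)^t$---is unnecessary and is precisely the obstacle you should avoid. The hypothesis is that $\cR^t$ is \emph{isomorphic as an equivalence relation} to $\cR(G \actson Y)$, not merely that the von Neumann algebras coincide. A stable orbit equivalence directly yields a Zimmer cocycle $\om : \cR \to G$ via the standard skew-product correspondence (Lemma~\ref{lem.correspondence}): one never needs to locate $L^\infty(Y)$ inside $M^t$, because the Cartan is handed to you by the OE. The paper explicitly contrasts this with \cite{KV15}, where W$^*$-rigidity for such wreath products \emph{is} established, but only under the much more restrictive hypotheses that $\cR_0$ is amenable and $\Gamma$ is torsion-free nonelementary hyperbolic---and even then without amplifications. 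For arbitrary $\cR_0$ and arbitrary nonamenable icc $\Gamma$, Cartan uniqueness is open and is not what Theorem~\ref{thm.stable-non-implement} claims.

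Once the Cartan step is removed, your outline points in the right direction but glosses over the most delicate part. After Popa's cocycle superrigidity untwists $\om$ on the Bernoulli subaction to a homomorphism $\delta : \Gamma \to G$ (injective by icc), a weak-mixing argument shows $\om|_{\cR_i}$ depends only on coordinate $i$, and one assembles a surjection $\delta : \Lambda \wr_I \Gamma \to G_1$ with kernel $\Sigma_2 \subset \Lambda^{(I)}$. The integrality $t \in \N$ does \emph{not} simply ``drop out of an index count'': the paper (step~9 in the proof of Theorem~\ref{thm.main-wreath}) passes to the von Neumann algebra $L(\cR_0^{(I)})$ and invokes a second, \emph{noncommutative} Bernoulli cocycle superrigidity theorem \cite{Pop05a,VV14} to show that $\Sigma_2$ has finite index in $\Lambda_2^{(I)}$ for a finite central subgroup $\Lambda_2 \subset \Lambda$. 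This is what forces $t = [\Lambda_2^{(I)}:\Sigma_2]\,[G:G_1] \in \N$ and simultaneously exhibits $\cR_0$ as the orbit relation of the free action of $\Lambda/\Lambda_2$ on $X_0$.
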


Since $\cR_0$ is completely arbitrary, Theorem \ref{thm.stable-non-implement} provides the following wide class of ergodic, type II$_1$ equivalence relations $\cR$ with the property that no finite amplification $\cR^t$ can be implemented by an essentially free group action. It suffices to take $X_0 = \{1,2,3\}$ with the uniform probability measure $\mu_0$ and define $\cR_0$ as the equivalence relation on $X_0$ generated by $1 \sim 2$. Clearly, $\cR_0$ cannot be implemented by an essentially free group action, since the group should have at the same time two elements and one element. So, for any nonamenable icc group $\Gamma$, the left-right wreath product $\cR = \cR_0 \wr_\Gamma (\Gamma \times \Gamma)$ has the property that no finite amplification $\cR^t$ can be implemented by an essentially free group action.

Theorem \ref{thm.stable-non-implement} is an immediate corollary of a much more general result, Theorem \ref{thm.main-wreath} below, in which we completely describe all essentially free group actions $G \actson (Y,\eta)$ that are stably orbit equivalent to $\cR = \cR_0 \wr_\Gamma (\Gamma \times \Gamma)$. We prove the following OE-superrigidity result. If $G \actson (Y,\eta)$ is any essentially free group action such that $\cR(G \actson Y) \cong \cR^t$, then $G \actson Y$ is induced from $G_1 \actson Y_1$, the group $G_1$ admits a finite normal subgroup $\Sigma \lhd G_1$ and the action $G_1/\Sigma \actson Y_1/\Sigma$ is conjugate with the natural action of the left-right wreath product group $\Lambda_0 \wr_\Gamma (\Gamma \times \Gamma)$ on $(X,\mu)$, where $\Lambda_0 \actson X_0$ is an essentially free action implementing $\cR_0$. Moreover, the amplification factor $t$ equals $|\Sigma| \, [G:G_1]$.

{\bf Acknowledgment.} We thank Filippo Calderoni for kindly pointing out that the formulation of Propositions \ref{prop.p-adic} and \ref{prop.real} in the first version of this paper was imprecise, because we overlooked that in the exceptional $4$-dimensional case, the groups $\SO(n,m)$ with $n+m=4$ are not almost simple, but locally a product of two rank $1$ groups.

\section{Preliminaries}\label{sec.prelim}

\subsection{Stable orbit equivalence and OE-superrigidity}\label{sec.prelim-OE}

A \emph{stable isomorphism} between two countable nonsingular ergodic equivalence relations $\cR_i$ on standard probability spaces $(X_i,\mu_i)$ is a nonsingular isomorphism $\Delta : \cU_1 \to \cU_2$ between nonnegligible Borel sets $\cU_i \subset X_i$ such that $\Delta$ is an isomorphism between the restricted equivalence relations $\cR_i|_{\cU_i}$. We say that $\cR_1$ and $\cR_2$ are \emph{stably isomorphic} if such a stable isomorphism exists. If the measures $\mu_i$ are $\cR_i$-invariant probability measures, the ratio $\mu_2(\cU_2)/\mu_1(\cU_1)$ is called the \emph{compression constant} of the stable isomorphism $\Delta$. Note that ergodic equivalence relations of type II$_\infty$ are stably isomorphic if and only if they are isomorphic.

We say that two nonsingular essentially free ergodic actions $\Gamma_i \actson (X_i,\mu_i)$ of countable groups $\Gamma_i$ are \emph{(stably) orbit equivalent} if their associated orbit equivalence relations are (stably) isomorphic.

Let $\Gamma \actson (X,\mu)$ be an essentially free ergodic nonsingular action of a countable group $\Gamma$. Let $\Lambda$ be a countable group. A \emph{$1$-cocycle for $\Gamma \actson X$ with target group $\Lambda$} is a Borel map $\om : \Gamma \times X \to \Lambda$ such that $\om(gh,x) = \om(g,h\cdot x) \, \om(h,x)$ for all $g,h \in \Gamma$ and a.e.\ $x \in X$. The $1$-cocycles $\om_1$ and $\om_2$ are called \emph{cohomologous} if there exists a Borel map $\vphi : X \to \Lambda$ such that $\om_1(g,x) = \vphi(g \cdot x) \, \om_2(g,x) \, \vphi(x)^{-1}$ for all $g \in \Gamma$ and a.e.\ $x \in X$. Every group homomorphism $\delta : \Gamma \to \Lambda$ gives rise to a $1$-cocycle $\om(g,x) = \delta(g)$.

Whenever $\delta : \Gamma \to \Lambda$ is an embedding of $\Gamma$ into a larger countable group $\Lambda$, there is a natural \emph{induced} action of $\Lambda$ on $\Lambda/\delta(\Gamma) \times X$, which is stably orbit equivalent with $\Gamma \actson X$. In this paper, we use the terminology \emph{OE-superrigidity} and \emph{W$^*$-superrigidity} in the following strongest possible sense.

\begin{definition}\label{def.superrigid}
Let $\Gamma \actson (X,\mu)$ be an essentially free, ergodic, nonsingular action of a countable group $\Gamma$.
\begin{itemlist}
\item We say that $\Gamma \actson X$ is \emph{OE-superrigid} if any essentially free, ergodic, nonsingular, countable group action $\Lambda \actson Y$ that is stably orbit equivalent to $\Gamma \actson X$ must be conjugate to an induction of $\Gamma \actson X$.
\item We say that $\Gamma \actson X$ is \emph{W$^*$-superrigid} if any essentially free, ergodic, nonsingular, countable group action $\Lambda \actson Y$ such that $L^\infty(Y) \rtimes \Lambda$ is stably isomorphic with $L^\infty(X) \rtimes \Gamma$ must be conjugate to an induction of $\Gamma \actson X$.
\end{itemlist}
We say that a nonsingular action $\Gamma \actson (X,\mu)$ of a countable group is \emph{cocycle superrigid with countable targets} if any $1$-cocycle with values in an arbitrary countable group is cohomologous to a group homomorphism.
\end{definition}

In the second part of this paper, proving Theorem \ref{thm.stable-non-implement} and related results on left-right wreath product equivalence relations, we will make use of a variant of Furman's approach from \cite[Section 3]{Fur98} to measure equivalence and stable orbit equivalence of group actions. We gather here the preliminary definitions and results that we need.

First recall that given an ergodic countable pmp equivalence relation $\cR$ on a standard probability space $(X,\mu)$, the amplification $\cR^t$, $t > 0$, is defined as follows: denoting by $\cR^\infty$ the equivalence relation on $X \times \N$ given by $(x,n) \sim_{\cR^\infty} (y,m)$ iff $x \sim_\cR y$, we define $\cR^t$ by restricting $\cR^\infty$ to any subset of $X \times \N$ of measure $t$. The amplification $\cR^t$ is well defined up to isomorphism.

Whenever $G_1 \actson (Y_1,\eta_1)$ is a nonsingular action of a countable group $G_1$ on a standard probability space $(Y_1,\eta_1)$ and whenever $G_1$ is embedded as a subgroup of $G$, there is a natural induced action $G \actson Y = G \times_{G_1} Y_1$. By construction, $Y_1$ is a $G_1$-invariant Borel subset of $Y$ satisfying the following conditions: $g \cdot Y_1 \cap Y_1$ is nonnegligible iff $g \in G_1$ and $G \cdot Y_1$ has a complement of measure zero. These properties precisely characterize induced actions. Note that by construction $\cR(G \actson Y) \cong \cR(G_1 \actson Y_1)^t$, where $t = [G:G_1]$.

Also recall that a countable nonsingular equivalence relation $\cT$ on a standard $\sigma$-finite measure space $(Z,\eta)$ is said to be of \emph{type~I} if one of the following equivalent conditions is satisfied.
\begin{itemlist}
\item $\cT$ admits a fundamental domain: there exists a Borel set $D \subset Z$ such that $\cT|_D$ is trivial and such that a.e.\ $x \in Z$ is equivalent with a point in $D$.
\item There exists a sequence of Borel sets $D_n \subset Z$ such that $\bigcup_n D_n$ has a complement of measure zero and $\cT|_{D_n}$ has finite orbits for every $n$.
\end{itemlist}
Note that subequivalence relations, as well as restrictions of type~I equivalence relations remain of type~I. Also note that if the measure $\eta$ is $\cT$-invariant, then every choice of fundamental domain has the same measure. When $\cT$ is of type~I, the quotient $Z/\cT$ is a well defined standard measure space, which can be either identified with a fundamental domain of $\cT$ or be characterized by $L^\infty(Z/\cT) = L^\infty(Z)^\cT$, the von Neumann algebra of $\cT$-invariant functions on $(Z,\eta)$. Also note that if $\eta$ is a $\cT$-invariant probability measure, then $\cT$ is of type~I if and only if a.e.\ $x \in Z$ has a finite orbit.

In \cite[Section 3]{Fur98}, a one-to-one correspondence between stable orbit equivalence of group actions and measure equivalence was developed. It is straightforward to adapt these constructions to our context, where we have a countable equivalence relation on one side and a group action on the other side. For completeness, we formulate and prove this result as Lemma \ref{lem.correspondence}, which is entirely similar to \cite[Section 3]{Fur98}.

Recall that a cocycle $\om : \cR \to G$ on a countable nonsingular equivalence relation $\cR$ on $(X,\mu)$ with values in a countable group $G$ is a Borel map satisfying $\om(x,y) \, \om(y,z) = \om(x,z)$ and $\om(y,x) = \om(x,y)^{-1}$ for a.e.\ $x \sim_\cR y \sim_\cR z$. To any such cocycle is associated the skew product equivalence relation $\cR_\om$ on $X \times G$ defined by $(x,g) \sim_{\cR_\om} (y,h)$ iff $(x,y) \in \cR$ and $g = \om(x,y) h$. If $\mu$ is an $\cR$-invariant probability measure, the product with the counting measure on $G$ is an $\cR_\om$-invariant measure on $X \times G$. Every right translation by $g \in G$ in the second variable defines an automorphism of $\cR_\om$, meaning that
$$(x,h) \sim_{\cR_\om} (x',h') \quad\text{iff}\quad (x,hg) \sim_{\cR_\om} (x',h'g) \; .$$
So, whenever $\cR_\om$ is of type~I, we obtain the natural essentially free action $G \actson Y = (X \times G)/\cR_\om$. If $\cR$ is ergodic, the action $G \actson Y$ is ergodic. If $\mu$ is moreover an $\cR$-invariant probability measure, the action $G \actson Y$ is measure preserving with the invariant measure being finite iff the fundamental domain of $\cR_\om$ has finite measure.

\begin{lemma}\label{lem.correspondence}
Let $\cR$ be an ergodic countable nonsingular equivalence relation on the standard probability space $(X,\mu)$. Let $G \actson (Y,\eta)$ be an essentially free, ergodic, nonsingular action of a countable group $G$ on a standard probability space $(Y,\eta)$. There is a natural correspondence between
\begin{itemlist}
\item stable orbit equivalences $\Delta : \cU \subset X \to \cV \subset Y$ between $\cR$ and the orbit equivalence relation $\cR(G \actson Y)$~;
\item pairs $(\om,\theta)$ where $\om : \cR \to G$ is a cocycle such that the skew product $\cR_\om$ is of type~I and $\theta : (X \times G)/\cR_\om \to Y$ is a nonsingular isomorphism of $G$-actions~;
\end{itemlist}
characterized by $\theta(x,e) \in G \cdot \Delta(x)$ for a.e.\ $x \in \cU$. This correspondence is uniquely defined and bijective up to the following similarities.
\begin{itemlist}
\item Two stable orbit equivalences $\Delta_i : \cU_i \to \cV_i$ are called similar iff $\Delta_2(\cU_2 \cap \cR \cdot x) \in G \cdot \Delta_1(x)$ for a.e.\ $x \in \cU_1$.
\item Two pairs $(\om_i,\theta_i)$ as above are called similar iff there exists a Borel map $\vphi : X \to G$ such that $\om_2(x,y) = \vphi(x) \, \om_1(x,y) \, \vphi(y)^{-1}$ for a.e.\ $(x,y) \in \cR$ and $\theta_2(x,g) = \theta_1(x,\vphi(x)^{-1}g)$ for a.e.\ $(x,g) \in X \times G$.
\end{itemlist}
\end{lemma}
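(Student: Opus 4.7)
The plan is to construct the correspondence in both directions explicitly and then verify mutual inversion up to the stated similarities, following the general philosophy of \cite[Section 3]{Fur98} adapted to the present asymmetric setting where one side is an equivalence relation and the other a free group action.

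\emph{From $\Delta$ to $(\om,\theta)$.} I will first use ergodicity of $\cR$ to pick a Borel map $\pi : X \to \cU$ (defined on a conull set) with $\pi(x) \sim_\cR x$ and $\pi|_\cU = \id$. For $(x,y) \in \cR|_\cU$, essential freeness of $G \actson Y$ produces a unique $\om_0(x,y) \in G$ with $\om_0(x,y) \cdot \Delta(y) = \Delta(x)$; Borel measurability and the cocycle identity are automatic. Extending to all of $\cR$ by $\om(x,y) := \om_0(\pi(x),\pi(y))$ and defining $\theta(x,g) := g^{-1} \cdot \Delta(\pi(x))$, a direct unwinding of the definition of $\cR_\om$ yields $(x,g) \sim_{\cR_\om} (y,h)$ iff $\theta(x,g) = \theta(y,h)$. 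Hence $\theta$ descends to an injective, $G$-equivariant, nonsingular Borel map on $(X \times G)/\cR_\om$; ergodicity of $G \actson Y$ forces $G \cdot \cV$ to be conull, giving surjectivity onto $Y$. Because $Y$ is standard, the quotient is standard, so $\cR_\om$ is of type~I; the normalization $\theta(x,e) = \Delta(x)$ for $x \in \cU$ follows from $\pi|_\cU = \id$.

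\emph{From $(\om,\theta)$ to $\Delta$.} I will set $\cR_0 := \{(x,y) \in \cR : \om(x,y) = e\}$; under the embedding $x \mapsto (x,e)$ this is precisely the restriction of $\cR_\om$ to $X \times \{e\}$, so $\cR_0$ inherits type~I from $\cR_\om$. Pick any Borel fundamental domain $\cU \subset X$ for $\cR_0$ (of positive measure, being a $\cR_0$-transversal in a probability space), define $\Delta(x) := \theta(x,e)$ for $x \in \cU$ and $\cV := \Delta(\cU)$. Injectivity of $\Delta$ is the $\cR_0$-transversality of $\cU$, and the $G$-equivariance of $\theta$ combined with the definition of $\cR_\om$ forces $\Delta$ to intertwine $\cR|_\cU$ with $\cR(G \actson Y)|_\cV$, so $\Delta$ is a stable orbit equivalence with the required property $\theta(x,e) \in G \cdot \Delta(x)$.

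\emph{Mutual inversion and similarities.} The dependence of the first construction on the choice of $\pi$ is precisely the allowed similarity on pairs: replacing $\pi$ by $\pi'$ transforms $(\om,\theta)$ into $(\om',\theta')$ with $\om'(x,y) = \vphi(x)\om(x,y)\vphi(y)^{-1}$ and $\theta'(x,g) = \theta(x,\vphi(x)^{-1}g)$, where $\vphi(x) := \om_0(\pi'(x),\pi(x))$. Similarly, varying the fundamental domain $\cU$ alters $\Delta$ only by post-composition with $G$-translation on orbits, which is exactly the allowed similarity on stable orbit equivalences. A direct computation then confirms that the two constructions are mutually inverse modulo these similarities. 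I expect the principal difficulty to be organisational rather than conceptual: one must systematically verify existence of Borel selections, transfer of the type~I property between the various quotient and subequivalence constructions, and consistent null-set bookkeeping; the algebraic identities themselves are forced by the definitions.
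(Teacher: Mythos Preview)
Your proposal is correct and follows essentially the same route as the paper: the same section $\pi$ (the paper calls it $\gamma$), the same cocycle $\om(x,y)=\om_0(\pi(x),\pi(y))$, the same map $\theta(x,g)=g^{-1}\cdot\Delta(\pi(x))$, and in the reverse direction the same restriction of $\cR_\om$ to $X\times\{e\}$ to extract $\cU$ and $\Delta$. The one noteworthy variation is how you establish that $\cR_\om$ is of type~I: the paper builds an explicit fundamental domain $D=\bigcup_n \Delta^{-1}(g_n\cdot\cV_n)\times\{g_n\}$ from a Borel partition of $Y$ into pieces translatable into $\cV$, whereas you observe directly that $\theta$ is a Borel map constant on and separating $\cR_\om$-classes, so $\cR_\om$ is smooth. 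Your shortcut is cleaner; the paper's explicit $D$ has the minor advantage of making the measure of the fundamental domain (hence the compression constant) visible. For the similarity discussion the paper argues via the intermediate observation $\Delta_i(\cU_i\cap\cR\cdot x)\subset G\cdot\theta_i(x,e)$, but your formulation in terms of the ambiguity in choosing $\pi$ and the fundamental domain amounts to the same computation.
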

\begin{proof}
First assume that $\Delta : \cU \subset X \to \cV \subset Y$ is a stable orbit equivalence. By ergodicity of $\cR$, we can choose a Borel map $\gamma : X \to \cU$ such that $\gamma(x) \sim_\cR x$ for a.e.\ $x \in X$ and $\gamma(x) = x$ for all $x \in \cU$. Since the action $G \actson Y$ is essentially free, this leads to a well defined cocycle
$$
\om : \cR \to G : \Delta(\gamma(x)) = \om(x,y) \cdot \Delta(\gamma(y)) \quad\text{for a.e.\ $(x,y) \in \cR$.}
$$
Since $G \actson Y$ is ergodic, we can choose disjoint Borel sets $\cV_n \subset Y$ and elements $g_n \in G$ such that $\bigcup_n \cV_n$ has a complement of measure zero and $g_n \cdot \cV_n \subset \cV$. Define
$$D := \bigcup_n \bigl( \Delta^{-1}(g_n \cdot \cV_n) \times \{g_n\}\bigr) \; .$$
We prove that $D$ is a fundamental domain of $\cR_\om$. First, if $(x,g_n)$ and $(y,g_m)$ both belong to $D$ and $(x,g_n) \sim_{\cR_\om} (y,g_m)$, then $(x,y) \in \cR$ and $g_n = \om(x,y) g_m$. Because $\gamma(z) = z$ for all $z \in \cU$, we have that $\Delta(x) = \om(x,y) \cdot \Delta(y)$. Therefore,
$$g_n^{-1} \cdot \Delta(x) = g_n^{-1} \om(x,y) \cdot \Delta(y) = g_m^{-1} \cdot \Delta(y) \; .$$
The left hand side belongs to $\cV_n$ and the right hand side to $\cV_m$. If $n \neq m$, we have $\cV_n \cap \cV_m = \emptyset$. So, $n=m$. Then $\om(x,y) = e$, so that $\Delta(x) = \Delta(y)$ and $x=y$. We have proven that $(x,g_n) = (y,g_m)$. Thus, the restriction of $\cR_\om$ to $D$ is the trivial equivalence relation. Secondly, for all $g \in G$ and a.e.\ $x \in X$, we can take a unique $n$ such that $g^{-1} \cdot \Delta(\gamma(x)) \in \cV_n$. Put $y = \Delta^{-1}(g_ng^{-1} \cdot \Delta(\gamma(x)))$. Then, $(y,g_n) \in D$. Since $\gamma(y) = y$ and $\Delta(y) = g_ng^{-1} \cdot \Delta(\gamma(x))$, we get that $\om(y,x) = g_n g^{-1}$, so that $(y,g_n) \sim_{\cR_\om} (x,g)$. So, we have proven that $D$ is a fundamental domain for $\cR_\om$.

Define $\theta : X \times G \to Y$ as the essentially unique $\cR_\om$-invariant map satisfying $\theta(x,g_n) = g_n^{-1} \cdot \Delta(x)$ whenever $(x,g_n) \in D$. Then, $\theta : (X \times G)/\cR_\om \to Y$ is an isomorphism of $G$-actions. By construction, $\theta(x,e) \in G \cdot \Delta(x)$ for a.e.\ $x \in \cU$.

Conversely, assume that we are given a pair $(\om,\theta)$ as in the formulation of the lemma. Since the restriction of $\cR_\om$ to $X \times \{e\}$ remains of type~I, we can choose a nonnegligible Borel set $\cU \subset X$ such that the restriction of $\cR_\om$ to $\cU \times \{e\}$ is the trivial equivalence relation. We then find a nonnegligible Borel set $\cV \subset Y$ and a nonsingular isomorphism $\Delta : \cU \to \cV$ such that $\theta(x,e) = \Delta(x)$ for all $x \in \cU$. Fix $x,y \in \cU$ and $g \in G$. We have $(x,y) \in \cR$ with $\om(x,y) = g$ iff $(x,g) \sim_{\cR_\om} (y,e)$ iff $\theta(x,g) = \theta(y,e)$ iff $g^{-1} \cdot \Delta(x) = \Delta(y)$. So, $\Delta$ is a stable orbit equivalence.

Finally assume that for $i \in \{1,2\}$, we are given stable orbit equivalences $\Delta_i : \cU_i \to \cV_i$ and pairs $(\om_i,\theta_i)$ as in the lemma satisfying $\theta_i(x,e) \in  G \cdot \Delta_i(x)$ for a.e.\ $x \in \cU_i$. We have to prove that $\Delta_1$ is similar to $\Delta_2$ if and only if $(\om_1,\theta_1)$ is similar to $(\om_2,\theta_2)$.

Note that since $\theta_i$ is $\cR_{\om_i}$-invariant and $G$-equivariant, we have that
\begin{equation}\label{eq.better-inclusion}
\Delta_i(\cU_i \cap \cR \cdot x) \subset G \cdot \theta_i(x,e) \quad\text{for a.e.\ $x \in X$ and all $i \in \{1,2\}$.}
\end{equation}
So if $\Delta_1$ and $\Delta_2$ are similar, we get that $\theta_2(x,e) \in G \cdot \theta_1(x,e)$ for a.e.\ $x \in X$. Since the action $G \actson Y$ is essentially free, we find a Borel map $\vphi : X \to G$ such that $\theta_2(x,e) = \vphi(x) \cdot \theta_1(x,e)$ for a.e.\ $x \in X$. It follows that for a.e.\ $(x,y) \in \cR$,
\begin{align*}
& \theta_2(y,e) = \theta_2(x,\om_2(x,y)) = \om_2(x,y)^{-1} \cdot \theta_2(x,e) = \om_2(x,y)^{-1} \vphi(x) \cdot \theta_1(x,e) \quad\text{and}\\
& \theta_2(y,e) = \vphi(y) \cdot \theta_1(y,e) = \vphi(y) \cdot \theta_1(x,\om_1(x,y)) = \vphi(y) \om_1(x,y)^{-1} \cdot \theta_1(x,e) \; .
\end{align*}
Since $G \actson Y$ is essentially free, we conclude that $\om_2(x,y) = \vphi(x) \om_1(x,y) \vphi(y)^{-1}$ for a.e.\ $(x,y) \in \cR$. By construction, we have
$$\theta_2(x,g) = g^{-1} \cdot \theta_2(x,e) = g^{-1} \vphi(x) \cdot \theta_1(x,e) = \theta_1(x,\vphi(x)^{-1} g) \; .$$
So, $(\om_1,\theta_1)$ and $(\om_2,\theta_2)$ are similar.

Conversely, assume that $(\om_1,\theta_1)$ and $(\om_2,\theta_2)$ are similar. Then $\theta_2(x,e) \in G \cdot \theta_1(x,e)$ for a.e.\ $x \in X$. It follows from \eqref{eq.better-inclusion} that $\Delta_1$ and $\Delta_2$ are similar.
\end{proof}

\begin{remark}\label{rem.induced}
Let $\cR$ be a countable nonsingular equivalence relation on $(X,\mu)$ and $\om : \cR \to G$ a cocycle with values in a countable group $G$ such that $\cR_\om$ is of type~I. Consider $G \actson Y = (X \times G)/\cR_\om$. If $G_1 \subset G$ is a subgroup and $\om(\cR) \subset G_1$, we have that $X \times G_1$ is $\cR_\om$-invariant. Denote $Y_1 = (X \times G_1)/\cR_\om \subset Y$. By construction, $G \actson Y$ is induced from $G_1 \actson Y_1$.
\end{remark}

As is well known, OE-superrigidity follows from cocycle superrigidity with countable targets. Such a result goes back to \cite[Proposition 4.2.11]{Zim84}. The most general version of this principle, for nonsingular actions of lcsc groups was contained in the proof of \cite[Lemma 5.10]{PV08}. For nonsingular actions of countable groups, it immediately follows from Lemma \ref{lem.correspondence} and we thus include a proof for completeness.

\begin{lemma}\label{lem.cocycle-OE-superrigid}
Let $\Gamma \actson (X,\mu)$ be an essentially free, ergodic, nonsingular action of a countable group $\Gamma$ on a standard probability space $(X,\mu)$. Assume that $\Gamma \actson X$ is cocycle superrigid with countable targets.

If $G \actson (Y,\eta)$ is any essentially free, ergodic, nonsingular action of a countable group on a standard probability space that is stably orbit equivalent to $\Gamma \actson (X,\mu)$, there exists a normal subgroup $N \lhd \Gamma$ such that $N \actson X$ admits a fundamental domain and $G \actson Y$ is conjugate to an induction of $\Gamma/N \actson X/N$.
\end{lemma}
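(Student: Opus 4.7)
The plan is to use the correspondence in Lemma \ref{lem.correspondence} to translate the stable orbit equivalence into a cocycle picture, then apply cocycle superrigidity to replace this cocycle by a group homomorphism $\delta : \Gamma \to G$, and finally read off $N$ as $\ker \delta$ together with the induced-action structure directly from the skew product.

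Concretely, I would first invoke Lemma \ref{lem.correspondence} for $\cR := \cR(\Gamma \actson X)$ and $G \actson Y$ to obtain a cocycle $\om : \cR \to G$ with $\cR_\om$ of type~I, together with a $G$-equivariant nonsingular isomorphism $\theta : (X \times G)/\cR_\om \to Y$. Essential freeness of $\Gamma \actson X$ lets me rewrite $\om$ as a $1$-cocycle $\tilde\om : \Gamma \times X \to G$ via $\tilde\om(g,x) := \om(g\cdot x,x)$, and the cocycle superrigidity hypothesis provides a group homomorphism $\delta : \Gamma \to G$ and a Borel map $\vphi : X \to G$ such that $\delta(g) = \vphi(g\cdot x)\,\tilde\om(g,x)\,\vphi(x)^{-1}$. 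Using the similarity clause of Lemma \ref{lem.correspondence} with this $\vphi$, I may then replace $(\om,\theta)$ by a similar pair and assume from the outset that $\om(g\cdot x,x) = \delta(g)$ for a.e.\ $x \in X$ and all $g \in \Gamma$.

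Set $N := \ker\delta \lhd \Gamma$ and $H := \delta(\Gamma) \leq G$. A direct computation shows that $\cR_\om$ is precisely the orbit equivalence relation of the diagonal action $g \cdot (x,h) = (g\cdot x, \delta(g)h)$ of $\Gamma$ on $X \times G$; in particular, its restriction to $X \times \{e\}$ coincides with $\cR(N \actson X)$. Since $\cR_\om$ is of type~I, so is this restriction, and therefore $N \actson X$ admits a fundamental domain. Because $\om$ now takes values in $H$, Remark \ref{rem.induced} ensures that $G \actson Y$ is induced from $H \actson Y_1$ with $Y_1 := (X \times H)/\cR_\om$. I would finally identify $H \actson Y_1$ with $\Gamma/N \actson X/N$ through the Borel map $\psi : X \to Y_1$, $x \mapsto [(x,e)]$: the explicit form of $\cR_\om$ shows that the fibres of $\psi$ are exactly the $N$-orbits, and writing an arbitrary element of $H$ as $\delta(g)$ yields both surjectivity of $\psi$ and the fact that the $H$-action pulls back to $\delta(g) \cdot [x]_N = [g\cdot x]_N$, which is precisely the action of $\Gamma/N$ on $X/N$ transported through the isomorphism $\Gamma/N \cong H$.

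The step that requires the most care is the bookkeeping linking a similarity in the sense of Lemma \ref{lem.correspondence} with a coboundary between $1$-cocycles in the sense of Definition \ref{def.superrigid}, so that after the similarity replacement one may assume $\om$ itself, rather than merely its cohomology class, to take the prescribed form $\om(g\cdot x, x) = \delta(g)$. Once this identification is made, the remainder is a straightforward verification of the cocycle identity and of the skew-product structure.
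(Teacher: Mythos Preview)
Your proposal is correct and follows essentially the same route as the paper's proof: apply Lemma~\ref{lem.correspondence} to get a cocycle, use cocycle superrigidity (together with the similarity clause) to straighten it to a homomorphism $\delta$, invoke Remark~\ref{rem.induced} to pass to $H=\delta(\Gamma)$, and identify $(X\times H)/\cR_\om$ with $X/N$ for $N=\ker\delta$. The only cosmetic difference is that the paper obtains the fundamental domain for $N\actson X$ by noting that a fundamental domain $D$ for $N$ yields $D\times\{e\}$ as a fundamental domain for $\Gamma\actson X\times G_1$, whereas you argue via the type~I restriction of $\cR_\om$ to $X\times\{e\}$; these are two phrasings of the same observation.
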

\begin{proof}
We apply Lemma \ref{lem.correspondence} to $\cR = \cR(\Gamma \actson X)$. By cocycle superrigidity, we find a group homomorphism $\delta : \Gamma \to G$ such that the action
$$\Gamma \actson X \times G : g \cdot (x,k) = (g \cdot x, \delta(g)k)$$
admits a fundamental domain and $G \actson Y$ is isomorphic with the action of $G$ on $(X \times G)/\Gamma$ by translation in the second variable. Define $G_1 = \delta(\Gamma)$. By Remark \ref{rem.induced}, we find that $G \actson Y$ is induced from an action $G_1 \actson Y_1$ that is isomorphic with the action of $G_1$ on $(X \times G_1)/\Gamma$. Define $N = \Ker \delta$. Since the action $\Gamma \actson X \times G_1$ admits a fundamental domain, also $N \actson X$ admits a fundamental domain, say $D \subset X$. Then $D \times \{e\}$ is a fundamental domain for $\Gamma \actson X \times G_1$ and we obtain a conjugacy between the actions $\Gamma/N \actson X/N$ and $G_1 \actson Y_1$.
\end{proof}

\subsection{Cross section equivalence relations}\label{sec.cross-section}

Throughout this section, $G$ is a lcsc group and $(X,\mu)$ is a standard probability space.

Let $G \actson (X,\mu)$ be an essentially free nonsingular action. Following \cite{For74}, a \emph{cross section} is a Borel set $Y \subset X$ for which there exists a neighborhood $\cU$ of the identity in $G$ such that the map $\theta : \cU \times Y \to X : (g,y) \mapsto g \cdot y$ is injective and such that $G \cdot Y$ has a complement of measure zero. Note here that the first condition implies that the Borel map $G \times Y \to X : (g,y) \mapsto g \cdot y$ is countable-to-one, so that $G \cdot Y$ is a Borel set. One may then choose a probability measure $\eta$ on $Y$ such that $\theta$ is a nonsingular isomorphism between $\cU \times Y$ and $\cU \cdot Y$.

By \cite[Proposition 2.10]{For74}, every essentially free nonsingular action $G \actson (X,\mu)$ admits a cross section $Y \subset X$. Then,
$$\cR = \{(x,y) \in Y \times Y \mid x \in G \cdot y\}$$
is a countable nonsingular Borel equivalence relation on $(Y,\eta)$. It is called a \emph{cross section equivalence relation} of $G \actson (X,\mu)$. If the action $G \actson (X,\mu)$ is moreover ergodic, all cross section equivalence relations are stably isomorphic. Also, the von Neumann algebra $L(\cR)$ is stably isomorphic with the crossed product $L^\infty(X) \rtimes G$. This and related results on cross section equivalence relations are discussed in \cite[Section 4]{KPV13}.

We say that an essentially free nonsingular action $G \actson (X,\mu)$ is of \emph{type~I} if a (equivalently, all) cross section equivalence relation is of type~I. By choosing a fundamental domain for a cross section equivalence relation, one sees that being of type~I is equivalent with the existence of a Borel set $Y \subset X$ and a probability measure $\eta$ on $Y$ such that the map $\theta : G \times Y \to X : (g,y) \mapsto g \cdot y$ is a nonsingular isomorphism. We denote the standard probability space $(Y,\eta)$ as $X/G$. Note that we can more canonically define $X/G$ by identifying $L^\infty(X/G)$ with the von Neumann algebra of $G$-invariant elements in $L^\infty(X)$.

A typical example of a free action of type~I is given by the translation action $H \actson G$ of a closed subgroup $H$ of a lcsc group $G$.

We note here, for later use, that if $G \actson (X,\mu)$ is an essentially free nonsingular action of type~I, then every $1$-cocycle $\om : G \times X \to \Lambda$ with values in a Polish group $\Lambda$ is cohomologous to the trivial cocycle: there exists a Borel map $\vphi : X \to \Lambda$ such that $\om(g,x) = \vphi(g \cdot x) \, \vphi(x)^{-1}$ for all $g \in G$ and a.e.\ $x \in X$. Indeed, by the Fubini theorem, there exists a $k \in G$ such that $\vphi : X \to \Lambda$ defined by $\vphi(\theta(h,y)) = \om(hk^{-1},\theta(k,y))$ satisfies the required property.

If $H \lhd G$ is a closed normal subgroup and $G \actson (X,\mu)$ is an essentially free nonsingular ergodic action such that the restriction $H \actson (X,\mu)$ is of type~I, we get the canonical nonsingular action $G/H \actson X/H$, which is still essentially free and ergodic. Writing $X$ as $H \times Y$ (up to measure zero), so that we can identify $X/H$ and $Y$, we see that a cross section $Z \subset X/H = Y$ for the action $G/H \actson X/H$ also is a cross section for $G \actson X$, with identical cross section equivalence relations. So, all cross section equivalence relations of $G \actson X$ and $G/H \actson X/H$ are stably isomorphic.

In this paper, we will briefly use Gromov's notion of measure equivalence, as developed in \cite[Section 3]{Fur98}. Let $G_1$ and $G_2$ be lcsc groups and let $G_1 \times G_2 \actson (Z,\mu)$ be an essentially free nonsingular ergodic action. Assume that both $G_1 \actson Z$ and $G_2 \actson Z$ are of type~I. Since the actions of $G_1$ and $G_2$ commute, the actions $G_1 \actson Z/G_2$ and $G_2 \actson Z/G_1$ are well defined, essentially free, ergodic and nonsingular actions. By the previous paragraph, their cross section equivalence relations are both stably isomorphic with the cross section equivalence relation of $G_1 \times G_2 \actson Z$. In particular, the von Neumann algebras $L^\infty(Z/G_2) \rtimes G_1$, $L^\infty(Z/G_1) \rtimes G_2$ and $L^\infty(Z) \rtimes (G_1 \times G_2)$ are all stably isomorphic.

\subsection{Treeability}\label{sec.treeable}

Following \cite{Ada88}, a countable nonsingular equivalence relation $\cR$ on a standard probability space $(X,\mu)$ is called \emph{treeable} if there exists a \emph{treeing}, i.e.\ a Borel set $\cT \subset \cR$ that is the set of edges of a graph (undirected, without loops) with vertex set $X$ such that for a.e.\ $x \in X$, the graph with vertex set $\cR \cdot x = \{y \in X \mid (y,x) \in \cR\}$ and edges $\cT \cap (\cR \cdot x \times \cR \cdot x)$ is a tree.

Let $\cR$ be treeable and assume that $\cR$ is ergodic and nonamenable. If $\nu \sim \mu$ is a $\sigma$-finite equivalent measure that is $\cR$-invariant, by \cite[Corollary 1.2]{Hjo05}, we can choose a nonnegligible Borel set $Y \subset X$ such that the restriction $\cR|_Y$ can be written as the orbit equivalence relation of an essentially free ergodic pmp action $\F_n \actson Y$ of a free group $\F_n$ with $2 \leq n \leq +\infty$. For that reason, treeable equivalence relations preserving a finite or infinite measure can never satisfy cocycle or OE-superrigidity properties (see e.g.\ \cite[Theorem 2.27]{MS02}).

Following \cite[Definition B.1]{CGMTD21}, a lcsc group $G$ is called \emph{strongly treeable} if for every essentially free pmp action $G \actson (X,\mu)$, the cross section equivalence relation is treeable. By \cite[Theorem 6]{CGMTD21}, the group $\SL(2,\R)$ is strongly treeable. By \cite[Example A.10]{CGMTD21}, a lcsc group $G$ that admits a proper action on a tree is strongly treeable. This includes the groups $\SL(2,\Q_p)$, see e.g.\
\cite[Chapter 2]{Ser80}, and more generally, algebraic groups over $\Q_p$ with $\Q_p$-rank equal to one, because their Bruhat-Tits building is a tree (see \cite{BT71}).

\section{Cocycle and OE superrigidity, proof of Theorem~\ref{thm.main-cocycle-superrigid}}

The main goal of this section is to prove the cocycle superrigidity Theorem \ref{thm.main-cocycle-superrigid}. In Proposition \ref{prop.clarify-essential-cocycle-superrigid}, we reinterpret the notion of essential cocycle superrigidity in two important cases: for totally disconnected groups and for connected Lie groups. We deduce cocycle and OE-superrigidity results for the associated actions $\Gamma \actson G/P$ on homogeneous spaces in Propositions \ref{prop.cocycle-superrigid-homogeneous-spaces}, \ref{prop.cocycle-superrigid-homogeneous-totally-disconnected} and \ref{prop.OE-superrigid-homogeneous}, each dealing with different situations depending on the connectivity properties of $G$ and $P$.

\begin{proof}[{Proof of Theorem~\ref{thm.main-cocycle-superrigid}}]
We view $\pr(\Gamma)$ as acting by right translations on $G$ and take a $1$-cocycle $\om : \pr(\Gamma) \times G \to \Lambda$ with values in a countable group $\Lambda$. We have to prove that $\om$ essentially untwists, as described in Definition \ref{def.essential-cocycle-superrigid}.

Consider the action of $H \times \Gamma$ on $G \times H$ by left translation by $H$ in the second variable and right translation by $\Gamma$. Define the $1$-cocycle
\begin{equation}\label{eq.cocycle-Om}
\Om : (H \times \Gamma) \times (G \times H) \to \Lambda : \Om((h,\gamma),(g,k)) = \om(\pr(\gamma),g) \; .
\end{equation}
We will prove Theorem \ref{thm.main-cocycle-superrigid} by constructing a covering $\pi : \Gtil \to G_0 \subset G$ such that the lifted $1$-cocycle
$$\Omtil : (H \times \Gammatil) \times (\Gtil \times H) \to \Lambda$$
can be extended to a $1$-cocycle $(\Gtil \times H \times \Gammatil) \times (\Gtil \times H) \to \Lambda$, where $\Gtil \times H$ acts by left translation and $\Gammatil$ acts by right translation. Such an extended $1$-cocycle is trivially cohomologous to a group homomorphism $\Gammatil \to \Lambda$ and this will conclude the proof of the theorem. To construct the lift and the extension of the $1$-cocycle, we will first locally extend $\Om$ to a map $(\cU \times H \times \Gamma) \times (G \times H) \to \Lambda$, where $\cU$ is a neighborhood of $e$ in $G$.

Write $Y = (G \times H)/\Gamma$ and denote by $\mu$ the $(G\times H)$-invariant probability measure on $Y$. As explained in Section \ref{sec.cross-section}, since $\Gamma$ is a closed subgroup of $G \times H$, the restriction of $\Omega$ to $\Gamma \times (G \times H)$ is cohomologous to the trivial $1$-cocycle. This means that we find a $1$-cocycle
$$C : H \times Y \to \Lambda$$
such that $\Om$ is cohomologous to $((h,\gamma),(g,k)) \mapsto C(h,(g,k)\Gamma)$.

We want to extend $C$ to a $1$-cocycle $(G \times H) \times Y \to \Lambda$ for the natural action of $G \times H$ on $Y$. We can however only do this locally, on a neighborhood $\cU$ of $e$ in $G$, and then do it globally on the appropriate lift of $G$. We define for every $k \in G$, the $1$-cocycle
$$C_k : H \times Y \to \Lambda : C_k(h,y) = C(h, k \cdot y) \; .$$
Constructing the (local) extension of $C$ basically amounts to proving that for $k$ close to $e$, the $1$-cocycles $C_k$ and $C$ are cohomologous.

{\bf Notations.} Consider the Polish group $\cF(Y,\Lambda)$ of measurable maps from $Y$ to $\Lambda$ (up to equality a.e.) with metric $d(F,F') = \mu \bigl(\{y \in Y \mid F(y) \neq F'(y)\}\bigr)$. Denote by $Z^1$ the space of $1$-cocycles $c : H \times Y \to \Lambda$, i.e.\ the space of continuous maps $c : H \to \cF(Y,\Lambda)$ satisfying $c_{hh'}(\cdot) = c_h(h'\cdot) \, c_{h'}(\cdot)$ for all $h,h' \in H$.

Whenever $c \in Z^1$ and $\vphi \in \cF(Y,\Lambda)$, define the cohomologous $1$-cocycle $\vphi \cdot c$ by
$$(\vphi \cdot c)(h,y) = \vphi(h \cdot y) \, c(h,y) \, \vphi(y)^{-1} \; .$$
Denote by $1 \in \cF(Y,\Lambda)$ the unit element, given by $y \mapsto e$.

Note that if $\vphi,\psi \in \cF(Y,\Lambda)$ and $c \in Z^1$ are such that $\vphi \cdot c = \psi \cdot c$ and $d(\vphi,\psi) < 1$, then $\vphi = \psi$. Indeed, in that case the set $\{y \in Y \mid \vphi(y) = \psi(y)\}$ is non-negligible and $H$-invariant, and thus of measure $1$, because $H \actson Y$ is ergodic.

{\bf Step 1.} \textit{There exists an open neighborhood $\cU$ of $e$ in $G$ and, for every $k \in \cU$, an element $\vphi_k \in \cF(Y,\Lambda)$ with $d(\vphi_k,1) < 1/4$ and $C_k = \vphi_k \cdot C$.}

When $H$ has property~(T), it follows from \cite[Proposition 5.14]{Fur09} that there exists an open neighborhood $\cU$ of $e$ in $G$ such that
$$\mu\bigl(\{ y \in Y \mid C_k(h,y) \neq C(h,y)\}\bigr) < 1/8 \quad\text{for all $k \in \cU$ and $h \in H$.}$$
Step~1 then follows from \cite[Lemma 2.1]{DIP19}.

When $H = H_1 \times H_2$ with the action $H_1 \actson Y$ being strongly ergodic and the action $H_2 \actson Y$ being ergodic, we prove step~1 by repeating part of the proofs of \cite[Lemma 3.1]{GITD16} and \cite[Lemma 2.4]{DIP19}. We start by constructing an open neighborhood $\cU$ of $e$ in $G$ such that
\begin{equation}\label{eq.estim-1-8}
\mu\bigl(\{ y \in Y \mid C_k(h,y) \neq C(h,y)\}\bigr) < 1/8 \quad\text{for all $k \in \cU$ and $h \in H_2$.}
\end{equation}

Since $H_1 \actson Y$ is strongly ergodic, we can take a compact subset $K_1 \subset H_1$ and a $\delta > 0$ such that the following holds: if $\cW \subset Y$ is a Borel set and $\mu(\cW \vartriangle g \cdot \cW) < \delta$ for all $g \in K_1$, then $\min \{\mu(\cW),\mu(Y \setminus \cW)\} < 1/8$.

For $i \in \{1,2\}$, $k \in G$ and $h \in H_i$, write
$$A_i(k,h) = \{y \in Y \mid C_k(h,y) = C(h,y)\} \; .$$
Take a compact subset $K_2 \subset H_2$ that generates $H_2$ as a group. Take an open $\cU \subset G$ with $e \in \cU$ such that $\mu(A_1(k,h_1)) > 1-\delta/2$ and $\mu(A_2(k,h_2)) > 7/8$ for all $k \in \cU$, $h_1 \in K_1$ and $h_2 \in K_2$. Whenever $k \in G$, $h_1 \in H_1$, $h_2 \in H_2$, $y \in Y$, we have
\begin{align*}
& C_k(h_1, h_2 \cdot y) \, C_k(h_2,y) = C_k(h_1h_2,y) = C_k(h_2h_1,y) = C_k(h_2,h_1 \cdot y) \, C_k(h_1,y) \; ,\\
& C(h_1, h_2 \cdot y) \, C(h_2,y) = C(h_1h_2,y) = C(h_2h_1,y) = C(h_2,h_1 \cdot y) \, C(h_1,y) \; .
\end{align*}
So, whenever $k \in \cU$, $h_1 \in K_1$, $h_2 \in H_2$ and $y \in A_1(k,h_1) \cap h_2^{-1} \cdot A_1(k,h_1)$, we have that $y \in A_2(k,h_2)$ iff $y \in h_1^{-1} \cdot A_2(k,h_2)$. Since
$$\mu( A_1(k,h_1) \cap h_2^{-1} \cdot A_1(k,h_1)) > 1 - \delta \; ,$$
it follows that $\mu(h_1 \cdot A_2(k,h_2) \vartriangle A_2(k,h_2)) < \delta$ for all $h_1 \in K_1$, $h_2 \in G_2$. Therefore,
$$\min\{ \mu(A_2(k,h)) , \mu(Y \setminus A_2(k,h))\} < 1/8$$
for all $k \in \cU$ and $h \in H_2$. Fix $k \in \cU$ and write
$$F_k = \{h \in H_2 \mid \mu(Y \setminus A_2(k,h)) < 1/8 \} \; .$$
By construction, $K_2 \subset F_k$. We prove that $F_k$ is a subgroup of $H_2$.

Since $A_2(k,h^{-1}) = h \cdot A_2(k,h)$ for all $h \in H_2$, we have that $F_k = F_k^{-1}$. When $h,h' \in F_k$, the cocycle identity implies that $\mu(Y \setminus A_2(k,hh')) < 1/4$. Since $1-1/4 > 1/8$, it is impossible that $\mu(A_2(k,hh'))< 1/8$. So, $\mu(Y \setminus A_2(k,hh')) < 1/8$ and $hh' \in F_k$.

Since $F_k \subset H_2$ is a subgroup and $K_2 \subset F_k$, we get that $F_k = H_2$. We have thus proven that \eqref{eq.estim-1-8} holds.

By \cite[Lemma 2.1]{DIP19}, we find for every $k \in \cU$, a unique $\vphi_k \in \cF(Y,\Lambda)$ such that $d(\vphi_k,1) < 1/4$ and
\begin{equation}\label{eq.wehavethisalready}
C_k(h_2,y) = (\vphi_k \cdot C)(h_2,y) \quad\text{for all $h_2 \in H_2$ and a.e.\ $y \in Y$.}
\end{equation}
Take a compact subset $K_0 \subset H_1$ that generates $H_1$ as a group. Making $\cU$ smaller if necessary, we may assume that for every $k \in \cU$ and $h_1 \in K_0$, we have that $\mu(A_1(k,h_1)) \geq 3/4$. Since $d(\vphi_k,1) < 1/4$, we get that for every $h_1 \in K_0$, the set
$$\{y \in Y \mid (\vphi_k \cdot C)(h_1,y) = C(h_1,y)\}$$
has measure at least $1/2$. We conclude that for every $k \in \cU$ and $h_1 \in K_0$, the set
$$Y(k,h_1) := \{y \in Y \mid C_k(h_1,y) = (\vphi_k \cdot C)(h_1,y)\}$$
is nonnegligible. Fix $k \in \cU$ and $h_1 \in K_0$. Using \eqref{eq.wehavethisalready} and the cocycle identity for $C_k$ and $\vphi_k \cdot C$, we get for all $h_2 \in H_2$ and a.e.\ $y \in Y(k,h_1)$ that
\begin{align*}
C_k(h_1,h_2 \cdot y) &= C_k(h_1h_2,y) \, C_k(h_2,y)^{-1} = C_k(h_2 h_1,y) \, C_k(h_2,y)^{-1} \\
& = C_k(h_2,h_1 \cdot y) \, C_k(h_1,y) \, C_k(h_2,y)^{-1} \\
& = (\vphi_k \cdot C)(h_2,h_1 \cdot y) \, (\vphi_k \cdot C)(h_1,y) \, (\vphi_k \cdot C)(h_2,y)^{-1}\\
& = (\vphi_k \cdot C)(h_1,h_2 \cdot y) \; ,
\end{align*}
so that $h_2 \cdot y \in Y(k,h_1)$. The nonnegligible set $Y(k,h_1) \subset Y$ is thus essentially $H_2$-invariant. Since $H_2 \actson Y$ is ergodic, we conclude that $\mu(Y(k,h_1)) = 1$ for all $k \in \cU$ and $h_1 \in K_0$. Since $K_0$ generates $G_1$ as a group, the cocycle identity implies that $C_k = \vphi_k \cdot C$ for all $k \in \cU$. So, step~1 is complete.

{\bf Step 2.} \textit{The map $\cU \to \cF(Y,\Lambda) : k \mapsto \vphi_k$ is continuous and satisfies the local $1$-cocycle property}
\begin{equation}\label{eq.cocycle-identity-varphi}
\vphi_{kk'}(y) = \vphi_k(k' \cdot y) \, \vphi_{k'}(y) \quad\text{\it for all $k,k' \in \cU$ with $kk' \in \cU$ and for a.e.\ $y \in Y$.}
\end{equation}
We first prove that $d(\vphi_k,1) \to 0$ when $k \to e$. We could have proven this while constructing $\vphi_k$ and by checking the constants in the proof of \cite[Lemma 2.1]{DIP19}. But also the following argument works. Choose $\eps > 0$ with $\eps < 1/2$. Since $H_1 \actson Y$ is strongly ergodic, take a compact subset $K_1 \subset H_1$ and a $\delta > 0$ such that whenever $\cW \subset Y$ is a Borel set satisfying $\mu(\cW \vartriangle h_1 \cdot \cW) < \delta$ for all $h_1 \in K_1$, then $\min\{\mu(\cW),\mu(Y \setminus \cW)\} < \eps$. Take an open subset $\cU_0 \subset \cU$ with $e \in \cU_0$ such that $\mu(A_1(k,h_1)) > 1-\delta/2$ for all $k \in \cU_0$ and $h_1 \in K_1$. Fix $k \in \cU_0$ and define $\cW_k = \{y \in Y \mid \vphi_k(y) = e\}$. Fix $h_1 \in K_1$. Since
$$\vphi_k(h_1 \cdot y) = C_k(h_1,y) \, \vphi_k(y) \, C(h_1,y)^{-1}$$
for a.e.\ $y \in Y$, we conclude that $h_1 \cdot (\cW_k \cap A_1(h_1,k)) = \cW_k \cap h_1 \cdot A_1(h_1,k)$. We get that $\mu(\cW_k \vartriangle h_1 \cdot \cW_k) < \delta$ for all $h_1 \in K_1$. Since $\mu(\cW_k) \geq 3/4 > \eps$, we conclude that $\mu(\cW_k) > 1-\eps$. This means that $d(\vphi_k,1) < \eps$ for all $k \in \cU_0$.

Next take $k,k' \in \cU$ with $kk' \in \cU$. We prove that \eqref{eq.cocycle-identity-varphi} holds. Define $\psi \in \cF(Y,\Lambda)$ by $\psi(y) = \vphi_k(k' \cdot y) \, \vphi_{k'}(y)$. Then, $d(\psi,1) \leq 1/2$. Evaluating the equality $C_k = \vphi_k \cdot C$ in $(h,k'\cdot y)$ implies that $C_{kk'} = \vphi_k(k'\cdot) \cdot C_{k'}$ and thus $C_{kk'} = \psi \cdot C$. Also, $C_{kk'} = \vphi_{kk'} \cdot C$. Since $d(\vphi_{kk'},1) < 1/4$, we have that $d(\psi,\vphi_{kk'}) < 3/4$. Thus, $\psi = \vphi_{kk'}$ and \eqref{eq.cocycle-identity-varphi} is proven.

We have already proven that $d(\vphi_k,1) \to 0$ when $k \to e$. In combination with \eqref{eq.cocycle-identity-varphi}, it follows that the map $\cU \to \cF(Y,\Lambda) : k \mapsto \vphi_k$ is continuous. So, step~2 is complete.

{\bf Step 3.} \textit{Construction of the covering $\pi : \Gtil \to G_0 \subset G$ and extending the lifted $1$-cocycle.}

By the general version of Iwasawa's splitting theorem proven in \cite[Theorem 4.1]{HM00}, we can pick a compact subgroup $K < G$, a simply connected Lie group $L$ and a continuous group homomorphism $\rho : L \to G$ such that the following holds: $K \subset \cU$, the subgroups $K$ and $\rho(L)$ of $G$ commute and the resulting continuous group homomorphism $\pi : K \times L \to G : \pi(k,g) = k \rho(g)$ is open and has discrete kernel $\Ker \pi$. We write $G_0 = \pi(K \times L)$ and $\Gtil = K \times L$.

By \eqref{eq.cocycle-identity-varphi}, the restriction $K \to \cF(Y,\Lambda) : k \mapsto \vphi_k$ is a $1$-cocycle.

We now reproduce part of the argument of \cite[Theorem 5.21]{Fur09} to prove the following claim: there exists a unique continuous map $\psi : L \to \cF(Y,\Lambda) : k \mapsto \psi_k$ such that $\psi_e = 1$ and $C_{\rho(k)} = \psi_k \cdot C$ for all $k \in L$. Write $\cV = \rho^{-1}(\cU)$. For every $k \in \cV$, define $\psi_k = \vphi_{\rho(k)}$. By construction, $C_{\rho(k)} = \psi_k \cdot C$ and $d(\psi_k,1) < 1/4$.

Evaluating the equality $C_{\rho(k)} = \psi_k \cdot C$ in $(\rho(k'),y)$, we get that $C_{\rho(kk')} = \psi_k(\rho(k')\cdot) \cdot C_{\rho(k')}$ for all $k \in \cV$ and $k' \in L$. It follows that the equivalence relation on $L$ defined by $k \sim k'$ iff $C_{\rho(k)}$ and $C_{\rho(k')}$ are cohomologous, has open equivalence classes. Since $L$ is connected, it follows that $C_{\rho(k)}$ is cohomologous to $C$ for all $k \in L$.

Define the closed subset $T \subset L \times \cF(Y,\Lambda)$ by
$$T = \{(k,\vphi) \in L \times \cF(Y,\Lambda) \mid C_{\rho(k)} = \vphi \cdot C \} \; .$$
Define the continuous map $p : T \to L : p(k,\vphi) = k$. In the previous paragraph, we have proven that $p$ is surjective. We have that $(e,1) \in T$ and $p(e,1) = e$. To prove the claim above, since $L$ is simply connected, it suffices to prove that $p$ is a covering map.

Fix $k_0 \in L$. Define $I = \{\vphi \in \cF(Y,\Lambda) \mid C_{\rho(k_0)} = \vphi \cdot C\}$. For every $\vphi \in I$, define the open set $\cW_\vphi \subset T$ by
$$\cW_\vphi = T \cap (\cV k_0 \times B_\vphi(1/4)) \; ,$$
where $B_\vphi(1/4)$ is the open ball with radius $1/4$ around $\vphi \in \cF(Y,\Lambda)$.

When $\vphi,\psi \in I$ are distinct, since $\vphi \cdot C = \psi \cdot C$, we have that $d(\vphi,\psi) = 1$. So, the open sets $(\cW_\vphi)_{\vphi \in I}$ are disjoint. When $\vphi \in I$ and $k \in \cV$, the element $\zeta_k \in \cF(Y,\Lambda)$ defined by
$$\zeta_k(y) = \psi_k(\rho(k_0) \cdot y) \, \vphi(y)$$
satisfies $C_{\rho(kk_0)} = \zeta_k \cdot C$ and $d(\zeta_k,\vphi) < 1/4$. So, $\zeta_k$ is the necessarily unique element of $B_\vphi(1/4)$ satisfying $C_{\rho(kk_0)} = \zeta_k \cdot C$. We have proven that $\cW_\vphi = \{(kk_0,\zeta_k) \mid k \in \cU\}$. Since $k \mapsto \zeta_k$ is continuous, the restriction of $p$ to $\cW_\vphi$ is a homeomorphism of $\cW_\vphi$ onto $\cV k_0$.

Finally, when $k \in \cV$ and $(kk_0,\zeta) \in T$ is an element in $p^{-1}(\cV k_0)$, we have $C_{\rho(kk_0)} = \zeta \cdot C$. Since $C_{\rho(kk_0)} = \psi_k(\rho(k_0) \cdot \,) \cdot C_{\rho(k_0)}$, we get that $\vphi := \psi_k(\rho(k_0) \cdot \,)^{-1} \, \zeta(\cdot)$ belongs to $I$, so that $(kk_0,\zeta) \in \cW_\vphi$. We have thus proven that $p^{-1}(\cV k_0)$ equals $\bigcup_{\vphi \in I} \cW_\vphi$.

We have therefore proven that $p : T \to G_0$ is a covering map and the claim above follows.

We next prove that $L \to \cF(Y,\Lambda) : k \mapsto \psi_k$ is a $1$-cocycle, i.e.\ for all $k,k' \in L$, we have that $\psi_{kk'}(y) = \psi_k(\rho(k')\cdot y) \, \psi_{k'}(y)$ for a.e.\ $y \in Y$. We first prove that for all $k \in L$, we have $\psi_k = \psi_{k^{-1}}(\rho(k)\cdot)^{-1}$. Write $\zeta_k = \psi_{k^{-1}}(\rho(k)\cdot)^{-1}$. Then, $k \mapsto \zeta_k$ is continuous, $\zeta_e = 1$ and $C_{\rho(k)} = \zeta_k \cdot C$ for all $k \in L$. By uniqueness, $\zeta_k = \psi_k$ for all $k \in L$.

Next, fix $k_0 \in L$. Defining $\gamma_k(y) = \psi_k(\rho(k_0)\cdot y) \, \psi_{k_0}(y)$, we have $C_{\rho(kk_0)} = \gamma_k \cdot C$ for all $k \in L$. Replacing $k$ by $kk_0^{-1}$, we also have that $C_{\rho(k)} = \gamma_{kk_0^{-1}} \cdot C$ for all $k \in L$. Since $k \mapsto \gamma_{kk_0^{-1}}$ is continuous and, by the previous paragraph, maps $e$ to $1$, it follows from uniqueness that $\gamma_{kk_0^{-1}} = \psi_k$ for all $k \in L$. Thus, $\psi_{kk_0} = \gamma_k$ for all $k \in L$, proving the cocycle identity.

We finally prove that there is a unique $1$-cocycle $\Psi : \Gtil \to \cF(Y,\Lambda)$ such that $\Psi_{(k,e)} = \vphi_k$ for all $k \in K$ and $\Psi_{(e,l)} = \psi_l$ for all $l \in L$. Uniqueness being obvious, we only have to prove that
$$\vphi_k(\rho(l) k^{-1} \cdot) \, \psi_l(k^{-1} \cdot) \, \vphi_{k^{-1}}(\cdot) = \psi_l \quad\text{for all $k \in K$, $l \in L$.}$$
Fix $k \in K$ and denote the left hand side by $\xi_l$. Then, the map $l \mapsto \xi_l$ is continuous, $\xi_e = 1$ and
$$\xi_l \cdot C = C_{k\rho(l)k^{-1}} = C_{\rho(l)} = \psi_l \cdot C \quad\text{for all $l \in L$.}$$
By uniqueness, $\xi_l = \psi_l$ for all $l \in L$. So, $\Psi$ is a well defined $1$-cocycle. We have $C_{\pi(g)} = \Psi_g \cdot C$ for all $g \in \Gtil$.

Consider the action of $\Gtil \times H$ on $Y$ given by $(g,h) \cdot y = (\pi(g),h)\cdot y$. Since $C_{\pi(g)} = \Psi_g \cdot C$ and since $\Psi$ is a $1$-cocycle, we get that
$$\Om_1 : (\Gtil \times H) \times Y \to \Lambda : \Om_1((g,h),y) = \Psi_g(h\cdot y) \, C(h,y)$$
is a $1$-cocycle that extends the $1$-cocycle $C : H \times Y \to \Lambda$.

{\bf Step 4.} \textit{End of the proof.}

Define $\Gamma_0 = \Gamma \cap (G_0 \times H)$ and $\Gammatil = (\pi \times \id)^{-1}(\Gamma_0)$. Denote by $\pr : \Gtil \times H \to \Gtil$ the projection onto the first factor. Then, $\Gammatil$ is a lattice in $\Gtil \times H$ and $\pr(\Gammatil) = \pi^{-1}(\pr(\Gamma) \cap G_0)$. Note that $(\Gtil \times H)/\Gammatil = (G_0 \times H)/\Gamma_0$ can be viewed as a nonnegligible subset $Y_0 \subset Y = (G \times H)/\Gamma$. Define the $1$-cocycles
\begin{align*}
& \Omtil_1 : (\Gtil \times H \times \Gammatil) \times (\Gtil \times H) \to \Lambda : \Omtil_1((g,h,\gamma),(x,y)) = \Om_1((g,h),(\pi(x),y)\Gamma) \; ,\\
& \Ctil : (H \times \Gammatil) \times (\Gtil \times H) \to \Lambda : \Ctil((h,\gamma),(x,y)) = C(h,(\pi(x),y)\Gamma) \; .
\end{align*}
Since $\Om_1$ extends $C$, we get that $\Omtil_1$ extends $\Ctil$. By the definition of $C$ after \eqref{eq.cocycle-Om}, we get that $\Ctil$ is cohomologous to the $1$-cocycle
$$\Omtil : (H \times \Gammatil) \times (\Gtil \times H) \to \Lambda : \Omtil((h,\gamma),(x,y)) = \om(\pi(\pr(\gamma)),\pi(x)) \; .$$
Since $\Omtil_1$ extends $\Ctil$, also the $1$-cocycle $\Omtil$ can be extended to a $1$-cocycle $(\Gtil \times H \times \Gammatil) \times (\Gtil \times H) \to \Lambda$. Such a $1$-cocycle is always cohomologous to a group homomorphism $\Gammatil \to \Lambda$. In particular, $\Omtil$ is cohomologous to a group homomorphism $\delta : \Gammatil \to \Lambda$.

This means that we find a Borel map $V : \Gtil \times H \to \Lambda$ such that for all $h \in H$ and $\gamma \in \Gammatil$, we have
$$V((e,h)(g,k)\gamma^{-1}) \, \delta(\gamma) \, V(g,k)^{-1} = \om(\pi(\pr(\gamma)),\pi(g)) \quad\text{for a.e.\ $(g,k) \in \Gtil \times H$.}$$
Taking $\gamma = e$, it follows that $V$ essentially only depends on the $\Gtil$-variable. We thus find a Borel map $W : \Gtil \to \Lambda$ such that $V(g,k) = W(g)$ for a.e.\ $(g,k) \in \Gtil \times H$. It follows that for all $\gamma \in \Gammatil$, we have
$$W(g \pr(\gamma)^{-1}) \, \delta(\gamma) \, W(g)^{-1} = \om(\pi(\pr(\gamma)),\pi(g)) \quad\text{for a.e.\ $g \in \Gtil$.}$$
If $\pr(\gamma) = e$, it follows that $W(g) \, \delta(\gamma) \, W(g)^{-1} = e$ for a.e.\ $g \in \Gtil$ and thus, $\delta(\gamma) = e$. Since $\pr(\Gammatil) = \pi^{-1}(\pr(\Gamma) \cap G_0)$, we find a group homomorphism $\delta_0 : \pi^{-1}(\pr(\Gamma) \cap G_0) \to \Lambda$ such that $\delta = \delta_0 \circ \pr$. We have proven that
$$W(g \gamma^{-1}) \, \delta_0(\gamma) \, W(g)^{-1} = \om(\pi(\gamma),\pi(g))$$
for all $\gamma \in \pi^{-1}(\pr(\Gamma) \cap G_0)$ and a.e.\ $g \in \Gtil$. We have thus proven that $\pr(\Gamma) < G$ is essentially cocycle superrigid in the sense of Definition~\ref{def.essential-cocycle-superrigid}.
\end{proof}

The following proposition clarifies the notion of essential cocycle superrigidity (Definition~\ref{def.essential-cocycle-superrigid}) in the two most important situations.

\begin{proposition}\label{prop.clarify-essential-cocycle-superrigid}
Let $\Gamma < G$ be a dense subgroup in a lcsc group.
\begin{enumlist}
\item If $G$ is totally disconnected, then $\Gamma < G$ is essentially cocycle superrigid with countable targets if and only if $\Gamma < G$ is \emph{virtually} cocycle superrigid with countable targets: for every $1$-cocycle $\om : \Gamma \times G \to \Lambda$ with values in a countable group $\Lambda$, there exists an open subgroup $G_0 < G$ such that the restriction of $\om$ to $(\Gamma \cap G_0) \times G_0$ is cohomologous to a group homomorphism $\Gamma \cap G_0 \to \Lambda$.
\item If $G$ is a connected Lie group with universal cover $\pi : \Gtil \to G$, then $\Gamma < G$ is essentially cocycle superrigid with countable targets if and only if for every $1$-cocycle $\om : \Gamma \times G \to \Lambda$ with values in a countable group $\Lambda$, the lifted $1$-cocycle $\omtil : \pi^{-1}(\Gamma) \times \Gtil \to \Lambda : \omtil(\gamma,g) = \om(\pi(\gamma),\pi(g))$ is cohomologous to a group homomorphism $\pi^{-1}(\Gamma) \to \Lambda$.
\end{enumlist}
\end{proposition}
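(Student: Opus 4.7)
The plan is to handle each part by relating an arbitrary covering witnessing essential cocycle superrigidity to the canonical choice (namely $\pi=\id$ in Part~1 and the universal cover in Part~2).

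For Part~1, one direction is immediate: if virtual cocycle superrigidity holds with open subgroup $G_0$, then essential cocycle superrigidity is witnessed by $\Gtil = G_0$ with $\pi = \id$. Conversely, assume essential cocycle superrigidity with a covering $\pi : \Gtil \to G_0$, and let $\delta : \Gammatil \to \Lambda$ and $\vphi : \Gtil \to \Lambda$ be the homomorphism and coboundary witnessing $\omtil(g,x) = \vphi(gx)\,\delta(g)\,\vphi(x)^{-1}$. Since $\Ker \pi$ is discrete and $G_0$ is totally disconnected, $\pi$ is a local homeomorphism onto the totally disconnected group $G_0$, hence $\Gtil$ is itself totally disconnected and therefore admits a basis of compact open subgroups at $e$. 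I would then pick such a compact open subgroup $\tilde U$ with $\tilde U \cap \Ker\pi = \{e\}$. Then $\pi|_{\tilde U}$ is a topological group isomorphism onto an open subgroup $U := \pi(\tilde U)$ of $G_0$, hence of $G$. Using $\Gammatil = \pi^{-1}(\Gamma \cap G_0)$ together with the bijectivity of $\pi|_{\tilde U}$, I would check that $\pi|_{\tilde U}$ restricts to a bijection $\tilde U \cap \Gammatil \to U \cap \Gamma$. Transporting $\delta$ and $\vphi$ through this identification produces a Borel map $\vphi_1 : U \to \Lambda$ and a group homomorphism $\delta_1 : U\cap\Gamma \to \Lambda$ witnessing that the restriction of $\om$ to $(U\cap\Gamma) \times U$ is cohomologous to $\delta_1$, as required.

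For Part~2, I would first observe that a connected Lie group has no proper open subgroup (any open subgroup contains the identity component), so in any witness to essential cocycle superrigidity the subgroup $G_0$ must equal $G$. The direction ($\Leftarrow$) is then nothing but the definition applied to the universal cover. For ($\Rightarrow$), suppose essential cocycle superrigidity is witnessed by a covering $\pi_1 : \Gtil_1 \to G$ with associated $\Gammatil_1 = \pi_1^{-1}(\Gamma)$, homomorphism $\delta : \Gammatil_1 \to \Lambda$, and coboundary $\vphi : \Gtil_1 \to \Lambda$. Since $\Gtil$ is simply connected and $\Ker \pi_1$ is discrete, standard Lie theory provides a unique covering homomorphism $q : \Gtil \to \Gtil_1$ with $\pi_1 \circ q = \pi$. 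Then $q(\pi^{-1}(\Gamma)) \subset \pi_1^{-1}(\Gamma)$, so pulling the cohomology equation back through $q \times q$ yields $\omtil(g,x) = (\vphi\circ q)(gx)\,(\delta\circ q|_{\pi^{-1}(\Gamma)})(g)\,(\vphi\circ q)(x)^{-1}$, exhibiting $\omtil$ as cohomologous to the group homomorphism $\delta \circ q|_{\pi^{-1}(\Gamma)}$.

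The only real subtlety is in Part~1, where extracting a topologically split \emph{subgroup} (not merely a neighborhood) inside the covering requires total disconnectedness of $\Gtil$, which is the reason the equivalence with virtual cocycle superrigidity is restricted to totally disconnected $G$. Part~2 is essentially a universal-property argument and should not present any obstacle beyond keeping track of the preimages of $\Gamma$ under the two coverings.
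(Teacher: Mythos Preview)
Your proposal is correct and follows essentially the same approach as the paper. In Part~1 you work upstairs (deducing that $\Gtil$ is totally disconnected and choosing a small compact open subgroup there), while the paper works downstairs (choosing an open subgroup $G_0 \subset \theta(\cV)$ in $G$ and lifting it); these are dual versions of the same idea. In Part~2 both arguments invoke the universal property of the simply connected cover to factor through the given covering; the paper first replaces the witness $\Gtil_1$ by its identity component to ensure the lift $q$ is surjective, a step you implicitly skip --- but since you only use $q$ to pull back the coboundary and homomorphism, surjectivity is not actually needed, and your argument goes through as written.
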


\begin{proof}
Both in 1 and 2, one implication is obvious. So we assume that $\Gamma < G$ is essentially cocycle superrigid with countable targets and let $\om : \Gamma \times G \to \Lambda$ be a $1$-cocycle with values in a countable group $\Lambda$. We take an open subgroup $G_1 < G$ and a covering $\theta : G_2 \to G_1$ such that the lifted $1$-cocycle
$$\om_2 : \theta^{-1}(\Gamma \cap G_1) \times G_2 \to \Lambda : \om_2(\gamma,g) = \om(\theta(\gamma),\theta(g))$$
is cohomologous to a group homomorphism $\theta^{-1}(\Gamma \cap G_1) \to \Lambda$.

1.\ Since $\theta$ is a covering, we can take an open symmetric neighborhood $\cU$ of $e$ in $G_2$ such that $\theta|_{\cU}$ is a homeomorphism of $\cU$ onto $\theta(\cU)$ and such that the sets $(k \cU)_{k \in \Ker \theta}$ are disjoint. We take an open symmetric neighborhood $\cV$ of $e$ in $G_2$ with $\cV \cV \subset \cU$. Since $G$ is totally disconnected, we can choose an open subgroup $G_0 < G_1$ with $G_0 \subset \theta(\cV)$. Since $\theta^{-1}(\theta(\cV))$ is the disjoint union of the sets $(k \cV)_{k \in \Ker \theta}$, it follows that $G_3 := \cV \cap \theta^{-1}(G_0)$ is an open subgroup of $G_2$ and that the restriction of $\theta$ to $G_3$ is a homeomorphism of $G_3$ onto $G_0$. The restriction of $\om_2$ to $(G_3 \cap \theta^{-1}(\Gamma \cap G_1)) \times G_3$ is cohomologous to a group homomorphism. Hence, the restriction of $\om$ to $(\Gamma \cap G_0) \times G_0$ is cohomologous to a group homomorphism.

2.\ Since $G$ is connected, we have that $G_1 = G$. Since $\theta$ is a covering and $G$ is connected, the connected component of the identity in $G_2$ is an open subgroup $G_3 < G_2$ and $\theta(G_3) = G$. So, we may replace $G_2$ by $G_3$ and assume that $G_2$ is connected. Since $\pi : \Gtil \to G$ is the universal cover, we find a covering $\rho : \Gtil \to G_2$ such that $\theta \circ \rho = \pi$. Since $\om_2$ is cohomologous to a group homomorphism, a fortiori $\omtil$ is cohomologous to a group homomorphism.
\end{proof}

\begin{remark}\label{rem.link-earlier-work}
Assume that $\Gamma$ is a countable dense subgroup of a compact profinite group $G$ and consider the translation action $\Gamma \actson^\al G$. If $\Gamma$ is a lattice in a lcsc group $H$, then the induction $\Ind_\Gamma^H(\al)$ of $\al$ to an $H$-action is naturally isomorphic to the action $H \actson (G \times H)/\Gamma$, where $\Gamma$ is embedded diagonally in $G \times H$. Since $G$ is compact, this diagonal embedding $\Gamma < G \times H$ is still a lattice. Using Proposition \ref{prop.clarify-essential-cocycle-superrigid}, we then get that for compact profinite groups $G$, the first point of Theorem \ref{thm.main-cocycle-superrigid} is identical to \cite[Theorem A]{DIP19}.

When $\Gamma$ is a countable property~(T) group and $\Gamma < G$ is a dense embedding of $\Gamma$ into a compact profinite group $G$, then the diagonal embedding $\Gamma < G \times \Gamma$ trivially is a lattice. By Theorem \ref{thm.main-cocycle-superrigid} and Proposition \ref{prop.clarify-essential-cocycle-superrigid}, it follows that the translation action $\Gamma \actson G$ is virtually cocycle superrigid with countable targets, as was proven in \cite[Theorem B]{Ioa08}.

Finally consider the second point of Theorem \ref{thm.main-cocycle-superrigid} and assume that $G$ is simply connected. Since $H$ has property~(T) and $\Gamma < G \times H$ is a lattice, the action $H \actson \Gamma \backslash (G \times H)$ by right translation in the second variable has property~(T) in the sense of Zimmer. By \cite[Proposition 3.5]{PV08}, also the action $\Gamma \actson (G \times H)/H$ has property~(T). This means that the translation action $\Gamma \actson G$ has property~(T). It thus follows from \cite[Theorem B]{Ioa14} that $\Gamma \actson G$ is cocycle superrigid with countable target groups. So the second point of our Theorem \ref{thm.main-cocycle-superrigid} is mainly added for completeness and unification, since it is essentially a special case of \cite[Theorem~B]{Ioa14}.
\end{remark}

\begin{proposition}\label{prop.cocycle-superrigid-homogeneous-spaces}
Let $G$ be a connected Lie group with dense subgroup $\Gamma < G$ and universal cover $\pi : \Gtil \to G$. Let $P < G$ be a closed subgroup. Assume that $\Gamma < G$ is essentially cocycle superrigid with countable targets.

The action $\Gamma \actson G/P$ is cocycle superrigid with countable targets if and only if $\pi^{-1}(P)$ is connected.
\end{proposition}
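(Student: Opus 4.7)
Both implications rest on analyzing how essential cocycle superrigidity of $\Gamma < G$ interacts with the fibration $G \to G/P$, which lifts via $\pi$ to $\Gtil \to \Gtil/\Ptil = G/P$ with $\Ptil := \pi^{-1}(P)$. For the ``if'' direction, I would take a cocycle $\om : \Gamma \times G/P \to \Lambda$, pull it back via the quotient $G \to G/P$ to get $\om' : \Gamma \times G \to \Lambda$, and apply Proposition~\ref{prop.clarify-essential-cocycle-superrigid}(2): the lifted cocycle $\omtil'(\tilde\gamma,\tilde g) := \om(\pi(\tilde\gamma),\pi(\tilde g)P)$ on $\pi^{-1}(\Gamma) \times \Gtil$ is cohomologous to a homomorphism $\delta : \pi^{-1}(\Gamma) \to \Lambda$ via a Borel map $\vphi : \Gtil \to \Lambda$, i.e.\ $\omtil'(\tilde\gamma,\tilde g) = \vphi(\tilde\gamma\tilde g)\,\delta(\tilde\gamma)\,\vphi(\tilde g)^{-1}$. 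The task reduces to showing $\vphi$ is essentially right $\Ptil$-invariant so it descends to $\Psi : G/P \to \Lambda$; combined with centrality of $\ker\pi \subset \Ptil$ in $\Gtil$ and $\omtil'(k,\cdot) = e$ for $k \in \ker\pi$, this then forces $\delta(\ker\pi) = \{e\}$, so $\delta$ descends to $\delta_0 : \Gamma \to \Lambda$ and we obtain $\om(\gamma,gP) = \Psi(\gamma gP)\,\delta_0(\gamma)\,\Psi(gP)^{-1}$.

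The key technical step is the right $\Ptil$-invariance of $\vphi$, which I would establish by an open-subgroup argument. Setting $F(p,\tilde g) := \vphi(\tilde g p)\vphi(\tilde g)^{-1}$, the right $\Ptil$-invariance of $\omtil'$ in $\tilde g$ combined with the cohomology identity yields by direct calculation the twisted invariance $F(p,\tilde\gamma\tilde g) = \omtil'(\tilde\gamma,\tilde g)\,F(p,\tilde g)\,\omtil'(\tilde\gamma,\tilde g)^{-1}$ for every $\tilde\gamma \in \pi^{-1}(\Gamma)$. Hence $A_p := \{\tilde g \in \Gtil : F(p,\tilde g) = e\}$ is invariant under left translation by the dense subgroup $\pi^{-1}(\Gamma) \subset \Gtil$, which acts ergodically, so $\mu(A_p) \in \{0,1\}$. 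Define $A := \{p \in \Ptil : \mu(A_p) = 1\}$; the cocycle identity $F(pq,\tilde g) = F(q,\tilde g p)F(p,\tilde g)$ together with $\Ptil$-invariance of Haar measure makes $A$ a subgroup of $\Ptil$. Continuity of right translation on $L^0(\Gtil,\Lambda)$ renders $p \mapsto F(p,\cdot)$ continuous from $\Ptil$ into $\cF(\Gtil,\Lambda)$ with $F(e,\cdot) = 1$, so for $p$ in a small neighborhood $\cU$ of $e$ one has $d(F(p,\cdot),1) < 1$; the $\{0,1\}$-dichotomy then forces $F(p,\cdot) \equiv e$ on $\cU$, showing $A$ is open. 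As an open subgroup of the connected Lie group $\Ptil$, $A = \Ptil$, completing the ``if'' direction.

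For the ``only if'' direction, suppose $\Ptil$ is disconnected and put $\Lambda := \Ptil/\Ptil_0$, a nontrivial countable discrete group. I would extract a counterexample cocycle from the $\pi^{-1}(\Gamma)$-equivariant principal $\Lambda$-bundle $\Gtil/\Ptil_0 \twoheadrightarrow G/P$. Choosing a Borel section $s : G/P \to \Gtil$ and writing each $\tilde g = s(\tilde g\Ptil)\,p(\tilde g)$ with $p(\tilde g) \in \Ptil$, the map $\vphi_0(\tilde g) := p(\tilde g)\Ptil_0 \in \Lambda$ produces a cocycle $\omtil_0(\tilde\gamma,\tilde g) := \vphi_0(\tilde\gamma\tilde g)\vphi_0(\tilde g)^{-1}$ depending on $\tilde g$ only through $\tilde g\Ptil$ and satisfying $\omtil_0(k,\cdot) \equiv k\Ptil_0$ for $k \in \ker\pi$. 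Projecting to $\Lambda' := \Lambda/\Lambda_0$ with $\Lambda_0$ the image of $\ker\pi$ yields an honest cocycle $\om : \Gamma \times G/P \to \Lambda'$. If $\om$ were cohomologous to a homomorphism $\delta_0$ via some $\Psi : G/P \to \Lambda'$, then pulling back to $\Gtil$ would give a Borel map $\rho := \bar\vphi_0\cdot(\Psi\circ\pi)^{-1}$ simultaneously $\pi^{-1}(\Gamma)$-equivariant (via $\delta_0\circ\pi$, for left translation) and right $\Ptil$-equivariant (via $q \mapsto \overline{q\Ptil_0}$); evaluating at $\tilde g = e$ and varying $\tilde\gamma \in \pi^{-1}(\Gamma) \cap \Ptil$ forces the constraint $\delta_0(\gamma) = \overline{\tilde\gamma\Ptil_0}$ on $\Gamma \cap P$, which generically fails to extend to a homomorphism $\Gamma \to \Lambda'$, yielding the contradiction. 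The main obstacle is the degenerate case $\ker\pi \cdot \Ptil_0 = \Ptil$ (forcing $\Lambda' = \{e\}$), where a separate argument invoking the non-splitting of the central extension $1 \to \ker\pi \to \pi^{-1}(\Gamma) \to \Gamma \to 1$ is needed to construct the counterexample.
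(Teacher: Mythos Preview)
Your ``if'' direction is correct and matches the paper's argument closely: both pull the cocycle back to $\Gammatil \times \Gtil$, apply Proposition~\ref{prop.clarify-essential-cocycle-superrigid}(2), and then show the untwisting map $\vphi$ is essentially right $\Ptil$-invariant using that the set $\{g : \vphi(gp) = \vphi(g)\}$ is $\Gammatil$-invariant (hence null or conull), is conull for $p$ near $e$ by continuity, and is a subgroup condition in $p$, whence connectedness of $\Ptil$ forces full invariance. The paper phrases this as ``$d(\vphi_k,\vphi_{k'}) < 1 \Rightarrow \vphi_k = \vphi_{k'}$, and $k \mapsto \vphi_k$ is continuous on the connected set $\Ptil$,'' but the content is the same.

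Your ``only if'' direction has a genuine gap. Your target group $\Lambda' = \Ptil/(\Ker\pi \cdot \Ptil_0)$ can be trivial even when $\Ptil$ is disconnected: this happens precisely when every component of $\Ptil$ meets $\Ker\pi$, in particular whenever $P = \{e\}$ and $G$ is not simply connected. You acknowledge this and propose to invoke non-splitting of $1 \to \Ker\pi \to \Gammatil \to \Gamma \to 1$, but that extension \emph{can} split (nothing in the hypotheses prevents $\Gamma$ from lifting to $\Gtil$), so this route is not available in general. Even in the non-degenerate case, ``evaluating at $\tilde g = e$'' is not legitimate for a.e.\ identities, and ``generically fails to extend to a homomorphism $\Gamma \to \Lambda'$'' is not an argument: you would need to show that no homomorphism $\delta_0 : \Gamma \to \Lambda'$ and no Borel $\Psi$ can realize the cohomology, and your sketch does not do this.

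The paper avoids both problems by choosing a larger target group. It sets $\Lambda_1 = \Ptil/\Ptil_0$ as you do, but then forms $\Lambda = (\Gammatil \times \Lambda_1)/L$ where $L = \{(n,\delta_1(n)) : n \in \Ker\pi\}$, and builds a cocycle $\Om$ for the $(\Gammatil \times \Ptil)/N$-action on $\Gtil$ via the obvious homomorphism $\delta$ to $\Lambda$; untwisting the $\Ptil$-part (which acts of type~I) produces the candidate $\om : \Gamma \times G/P \to \Lambda$. The point is that $\Lambda$ is always large enough: if $\om$ were cohomologous to some $\delta_0 : \Gamma \to \Lambda$, the paper shows, via an open-subgroup argument on $\Gtil$ using density of $\Gammatil$ and connectedness of $\Gtil$, that the cohomology map $\vphi : \Gtil \to \Lambda$ must be essentially constant, and then the identity $\delta(g,h) = \delta_0(\pi(g))$ for all $(g,h)$ forces $\Ptil \subset \Ker\delta_1$, contradicting $\Ptil_0 \subsetneq \Ptil$. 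Folding $\Gammatil$ into the target group is exactly what rescues the degenerate case and replaces your ``generic'' claim with a clean contradiction.
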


Note that the action $\Gamma \actson G/P$ is not necessarily essentially free. Also note that $\pi^{-1}(P)$ is connected if and only if $P$ is connected and the natural homomorphism from the fundamental group of $P$ to the fundamental group of $G$ is surjective. Finally note that by taking $P = \{e\}$, it follows that the translation action $\Gamma \actson G$ can only be plainly cocycle superrigid in the case where $G$ is simply connected.

\begin{proof}
We fix a probability measure $\mu$ on $\Gtil$ that has the same null sets as the Haar measure of $\Gtil$. As in the proof of Theorem \ref{thm.main-cocycle-superrigid}, we consider the Polish group $\cF(\Gtil,\Lambda)$ of Borel maps $\Gtil \to \Lambda$, where we identify two maps that are equal a.e., with metric $d(\vphi,\psi) = \mu\bigl(\{g \in \Gtil \mid \vphi(g) \neq \psi(g)\}\bigr)$.

Write $\Ptil = \pi^{-1}(P)$ and $\Gammatil = \pi^{-1}(\Gamma)$. First assume that $\Ptil$ is connected. Let $\om : \Gamma \times G/P \to \Lambda$ be a $1$-cocycle with values in a countable group $\Lambda$. We prove that $\om$ is cohomologous to a group homomorphism $\Gamma \to \Lambda$. Define the $1$-cocycle
$$\omtil : \Gammatil \times \Gtil \to \Lambda : \omtil(\gamma,g) = \om(\pi(\gamma),\pi(g)P) \; .$$
By Proposition \ref{prop.clarify-essential-cocycle-superrigid}, $\omtil$ is cohomologous to a group homomorphism $\delta_1 : \Gammatil \to \Lambda$. We thus find $\vphi \in \cF(\Gtil,\Lambda)$ such that
\begin{equation}\label{eq.untwist}
\om(\pi(\gamma),\pi(g)P) = \vphi(\gamma g) \, \delta_1(\gamma) \, \vphi(g)^{-1}
\end{equation}
for all $\gamma \in \Gammatil$ and a.e.\ $g \in \Gtil$. For every $k \in \Ptil$, define $\vphi_k \in \cF(\Gtil,\Lambda)$ by $\vphi_k(g) = \vphi(gk)$. Since $\pi(gk)P = \pi(g)P$ for all $g \in \Gtil$, $k \in \Ptil$, we get that for all $\gamma \in \Gammatil$ and $k \in \Ptil$,
\begin{equation}\label{eq.my-eq}
\vphi(\gamma g) \, \delta_1(\gamma) \, \vphi(g)^{-1} = \vphi_k(\gamma g) \, \delta_1(\gamma) \, \vphi_k(g)^{-1}
\end{equation}
for a.e.\ $g \in \Gtil$. Whenever $k,k' \in \Ptil$ and $d(\vphi_k,\vphi_{k'}) < 1$, the set $\{g \in \Gtil \mid \vphi_k(g) = \vphi_{k'}(g)\}$ is nonnegligible and, by \eqref{eq.my-eq}, essentially invariant under translation by $\Gammatil$. Since $\Gammatil < \Gtil$ is dense, it follows that $\vphi_k = \vphi_{k'}$ a.e. Since the map $\Ptil \to \cF(\Gtil,\Lambda) : k \mapsto \vphi_k$ is continuous and $\Ptil$ is connected, the map is constant. So, for all $k \in \Ptil$, we have that $\vphi_k = \vphi$ a.e.

Since $\Gtil/\Ptil = G/P$, we find a Borel map $\psi : G/P \to \Lambda$ such that $\vphi(g) = \psi(\pi(g)P)$ for a.e.\ $g \in \Gtil$. By \eqref{eq.untwist}, we get that
$$\om(\pi(\gamma),x) = \psi(\pi(\gamma)\cdot x) \, \delta_1(\gamma) \, \psi(x)^{-1}$$
for all $\gamma \in \Gammatil$ and a.e.\ $x \in G/P$. When $\gamma \in \Gammatil \cap \Ker \pi$, it follows that $\psi(x) \, \delta_1(\gamma) \, \psi(x)^{-1} = e$ for a.e.\ $x \in G/P$, so that $\delta_1(\gamma) = e$. We thus find a group homomorphism $\delta : \Gamma \to \Lambda$ such that $\delta_1 = \delta \circ \pi$. We have proven that $\om$ is cohomologous to $\delta$.

Conversely assume that $\Ptil$ is not connected. We construct a countable group $\Lambda$ and a $1$-cocycle $\om : \Gamma \times G/P \to \Lambda$ that is not cohomologous to a group homomorphism $\Gamma \to \Lambda$. Denote by $Q < \Ptil$ the connected component of the identity and define the nontrivial countable group $\Lambda_1 = \Ptil / Q$ with quotient homomorphism $\delta_1 : \Ptil \to \Lambda_1$. Define $N_1 = \Ker \pi$ and recall that $N_1$ is a discrete subgroup of $\Gtil$ and that $N_1$ lies in the center of $\Gtil$. Also, $N_1 \subset \Gammatil$ and $N_1 \subset \Ptil$.

Define the central subgroup $N = \{(n,n) \mid n \in N_1\}$ of $\Gammatil \times \Ptil$. We consider the action of $(\Gammatil \times \Ptil)/N$ by left and right multiplication on $\Gtil$. We also consider the central subgroup $L = \{(n,\delta_1(n))\mid n \in N_1\}$ of $\Gammatil \times \Lambda_1$. Define the countable group $\Lambda = (\Gammatil \times \Lambda_1)/L$. Denote by $\delta : (\Gammatil \times \Ptil)/N \to \Lambda$ the natural quotient homomorphism.

We can then define the $1$-cocycle
$$\Om : (\Gammatil \times \Ptil)/N \times \Gtil \to \Lambda : \Om((g,h),x) = \delta(g,h) \; .$$
We identify $\Ptil$ with the closed normal subgroup $(N_1 \times \Ptil)/N$ of $(\Gammatil \times \Ptil)/N$. Under this identification, the action of $\Ptil$ on $\Gtil$ is given by right translation. As explained in Section \ref{sec.cross-section}, it follows that the restriction of $\Om$ to $\Ptil \times \Gtil$ is cohomologous to the trivial $1$-cocycle. The quotient of $(\Gammatil \times \Ptil)/N$ by the closed normal subgroup $\Ptil$ is naturally identified with $\Gamma$ and $\Gtil/\Ptil = G/P$. Therefore, we find a $1$-cocycle $\om : \Gamma \times G/P \to \Lambda$ such that $\Om$ is cohomologous to
$$\Om' : (\Gammatil \times \Ptil)/N \times \Gtil \to \Lambda : \Om'((g,h),x) = \om(\pi(g),\pi(x)P) \; .$$
We prove that $\om$ is not cohomologous to a group homomorphism $\delta_0 : \Gamma \to \Lambda$, when viewed as a $1$-cocycle for the action $\Gamma \actson G/P$.

Assume the contrary. Then, $\Om'$ and thus also $\Om$, is cohomologous to the group homomorphism $(\Gammatil \times \Ptil)/N \to \Lambda : (g,h)N \mapsto \delta_0(\pi(g))$. So, we find a Borel map $\vphi : \Gtil \to \Lambda$ such that
\begin{equation}\label{eq.my-formula}
\delta(g,h) = \Om((g,h),x) = \vphi(gxh^{-1}) \, \delta_0(\pi(g)) \, \vphi(x)^{-1}
\end{equation}
for all $(g,h) \in \Gammatil \times \Ptil$ and a.e.\ $x \in \Gtil$.

Replacing $\delta_0$ by $(\Ad k) \circ \delta_0$ and replacing $\vphi(\cdot)$ by $\vphi(\cdot) k^{-1}$, for some $k \in \Lambda$, we may assume that the Borel set $\cU = \{x \in \Gtil \mid \vphi(x) = e\}$ is nonnegligible. It follows from \eqref{eq.my-formula} that
$$\vphi(gx) = \delta(g,e) \, \delta_0(\pi(g))^{-1}$$
for all $g \in \Gammatil$ and a.e.\ $x \in \cU$. So, the set
$$T = \{g \in \Gtil \mid \mu(g \cU \cap \cU) = 0 \;\;\text{or}\;\; \mu(g \cU \vartriangle \cU)=0\}$$
contains $\Gammatil$. Since $T$ is closed and $\Gammatil \subset \Gtil$ is dense, we have that $T = \Gtil$. Defining the subgroup $G_1 < G$ by
$$G_1 = \{g \in \Gtil \mid \mu(g \cU \vartriangle \cU) = 0\} \; ,$$
we get that $\mu(g \cU \cap \cU) = 0$ for all $g \in \Gtil \setminus G_1$. Therefore,
$$G_1 = \{g \in \Gtil \mid \mu(\cU \setminus g \cU) < \mu(\cU)\} \; ,$$
and it follows that $G_1 \subset G$ is open. Since $\Gtil$ is connected, we conclude that $G_1 = \Gtil$. Since $\cU$ is, up to measure zero, invariant under translation by $g \in G_1$ and since $\cU$ is nonnegligible, we conclude that $\Gtil \setminus \cU$ has measure zero. We have proven that $\vphi(x) = e$ for a.e.\ $x \in \Gtil$.

It then follows from \eqref{eq.my-formula} that $\delta(g,h) = \delta_0(\pi(g))$ for all $(g,h) \in \Gammatil \times \Ptil$. It follows that $(e,h) \in \Ker \delta$ for all $h \in \Ptil$, which means that $\Ptil = \Ker \delta_1$, contradicting that $Q < \Ptil$ is a proper subgroup.
\end{proof}

We have seen in Proposition \ref{prop.clarify-essential-cocycle-superrigid} that for totally disconnected groups $G$, essential cocycle superrigidity of a dense subgroup $\Gamma < G$ is the same as virtual cocycle superrigidity. Moreover, this virtual cocycle superrigidity property automatically passes to homogeneous spaces, contrary to what happens in the connected case that we treated in Proposition \ref{prop.cocycle-superrigid-homogeneous-spaces}.

\begin{proposition}\label{prop.cocycle-superrigid-homogeneous-totally-disconnected}
Let $G$ be a totally disconnected lcsc group and let $\Gamma < G$ be a dense subgroup that is virtually cocycle superrigid with countable targets.

For every closed subgroup $P < G$, the action $\Gamma \actson G/P$ is virtually cocycle superrigid with countable targets: if $\om : \Gamma \times G/P \to \Lambda$ is a $1$-cocycle with values in a countable group $\Lambda$, there exists an open subgroup $G_0 < G$ such that, writing $\Gamma_0 = \Gamma \cap G_0$ and $P_0 = P \cap G_0$, the restricted $1$-cocycle $\om_0 : \Gamma_0 \times G_0/P_0 \to \Lambda$ is cohomologous to a group homomorphism.
\end{proposition}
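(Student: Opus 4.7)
The plan is to lift the cocycle $\om$ to a cocycle on the ambient group $G$, apply the assumed virtual cocycle superrigidity there, and then descend the resulting untwisting to $G_0/P_0$ after shrinking $G_0$ so that $P \cap G_0$ becomes small. The totally disconnected structure of $G$ is what enables this shrinking step.

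Concretely, I would set $\tilde\om(\gamma,g) = \om(\gamma, gP)$, which is a right $P$-invariant $1$-cocycle for the left-translation action $\Gamma \actson G$. Virtual cocycle superrigidity of $\Gamma < G$ then produces an open subgroup $G_1 < G$, a Borel map $\vphi : G_1 \to \Lambda$, and a group homomorphism $\delta : \Gamma \cap G_1 \to \Lambda$ such that $\tilde\om(\gamma, g) = \vphi(\gamma g) \, \delta(\gamma) \, \vphi(g)^{-1}$ for $\gamma \in \Gamma \cap G_1$ and a.e.\ $g \in G_1$. The remaining task is to upgrade $\vphi$ to being right $P_0$-invariant for some smaller open subgroup $G_0 < G_1$ with $P_0 = P \cap G_0$; once this is achieved, $\vphi$ descends to a map $\psi : G_0/P_0 \to \Lambda$ and the identity above descends to the desired cohomology.

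To measure the failure of right $P_0$-invariance, I would, for each $h \in P \cap G_1$, introduce $\chi_h(g) = \vphi(gh)^{-1} \vphi(g)$. Combining the cocycle identity for $\tilde\om$ and its right $P$-invariance should yield the equivariance $\chi_h(\gamma g) = \delta(\gamma)\, \chi_h(g)\, \delta(\gamma)^{-1}$ for $\gamma \in \Gamma \cap G_1$ and a.e.\ $g$. Since right translation is continuous in measure on $L^0(G_1, \Lambda)$ and $\chi_e \equiv e$, there is an open neighborhood $U_0$ of $e$ in $P \cap G_1$ on which $\mu\bigl(\{g : \chi_h(g) \neq e\}\bigr) < 1/2$, where $\mu$ is left Haar measure on $G_1$. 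For any such $h \in U_0$, the set $\{\chi_h = e\}$ is $(\Gamma \cap G_1)$-invariant by the equivariance, has measure $> 1/2$, and, because $\Gamma \cap G_1$ is dense in $G_1$, the translation action $\Gamma \cap G_1 \actson (G_1, \mu)$ is ergodic. These combine to force $\chi_h \equiv e$ a.e.\ for every $h \in U_0$.

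Finally, I would invoke the totally disconnected structure of $G$: pick an open subgroup $G_0 < G_1$ small enough that $P \cap G_0 \subset U_0$, which is possible since $U_0$ is the trace on $P \cap G_1$ of an open subset of $G_1$ and $G_1$ admits a neighborhood basis at the identity of open compact subgroups. Then $\vphi|_{G_0}$ is right $P_0$-invariant a.e.\ and descends to $\psi : G_0/P_0 \to \Lambda$ satisfying $\om_0(\gamma, \bar g) = \psi(\gamma \cdot \bar g) \, \delta|_{\Gamma_0}(\gamma) \, \psi(\bar g)^{-1}$ on $\Gamma_0 \times G_0/P_0$. The main obstacle I expect is the upgrade of the condition ``$\chi_h = e$ on a majority set'' to ``$\chi_h = e$ almost everywhere''; this is where the interplay between measurable continuity of right translation by $P$ and ergodicity of the dense left translation by $\Gamma$ really does the work, while the totally disconnected structure then delivers the final shrinking step for free.
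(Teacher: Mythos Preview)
Your proposal is correct and follows essentially the same route as the paper's proof: lift $\om$ to $\tilde\om(\gamma,g)=\om(\gamma,gP)$, apply virtual cocycle superrigidity to obtain $G_1$, $\vphi$, $\delta$, use the equivariance $\chi_h(\gamma g)=\delta(\gamma)\chi_h(g)\delta(\gamma)^{-1}$ together with ergodicity of the dense subgroup $\Gamma\cap G_1$ acting on $G_1$ to force $\chi_h\equiv e$ for $h$ near $e$ in $P\cap G_1$, and then shrink to an open subgroup $G_0$ with $P\cap G_0$ inside that neighborhood. The paper phrases the invariance step via $\vphi_k(g)=\vphi(gk)$ and the set $Y_k=\{g:\vphi_k(g)=\vphi(g)\}$, which is exactly your set $\{\chi_k=e\}$. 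One small cosmetic point: you speak of ``left Haar measure $\mu$ on $G_1$'' and the threshold $1/2$, but $G_1$ need not be compact; it is cleaner to fix any probability measure equivalent to Haar (as the paper implicitly does) and simply use that $\{\chi_h=e\}$ is nonnegligible and $(\Gamma\cap G_1)$-invariant, hence conull by ergodicity.
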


\begin{proof}
Fix a closed subgroup $P < G$ and a $1$-cocycle $\om : \Gamma \times G/P \to \Lambda$. Since $\Gamma < G$ is virtually cocycle superrigid and $(\gamma,g) \mapsto \om(\gamma,gP)$ is a $1$-cocycle, we find an open subgroup $G_1 < G$, a Borel map $\vphi : G_1 \to \Lambda$ and a group homomorphism $\delta$ from $\Gamma_1 := \Gamma \cap G_1$ to $\Lambda$ such that
$$\om(\gamma,gP) = \vphi(\gamma g) \, \delta(\gamma) \, \vphi(g)^{-1}$$
for all $\gamma \in \Gamma_1$ and a.e.\ $g \in G_1$. For every $k \in P \cap G_1$, define $\vphi_k : G_1 \to \Lambda : \vphi_k(g) = \vphi(gk)$. Note as before that
$$\vphi_k(\gamma g) \, \delta(\gamma) \, \vphi_k(g)^{-1} = \vphi(\gamma g) \, \delta(\gamma) \, \vphi(g)^{-1} \; ,$$
so that for every $k \in P \cap G_1$, the set $Y_k := \{g \in G_1 \mid \vphi_k(g) = \vphi(g)\}$ is invariant under translation by $\Gamma_1$. Taking a neighborhood $\cU$ of the identity in $P \cap G_1$ such that $Y_k$ is nonnegligible for all $k \in \cU$, we get that $\vphi_k(g) = \vphi(g)$ for all $k \in \cU$ and a.e.\ $g \in G_1$. Since $G$ is totally disconnected, we can take an open subgroup $G_0 < G_1$ such that $P \cap G_0 \subset \cU$. We can now restrict $\vphi$ to a well defined map $G_0/(P \cap G_0) \to \Lambda$ and, writing $\Gamma_0 = \Gamma \cap G_0$ and $P_0 = P \cap G_0$, it follows that the restricted $1$-cocycle $\om_0 : \Gamma_0 \times G_0/P_0 \to \Lambda$ is cohomologous to the restriction of $\delta$ to $\Gamma_0$.
\end{proof}

We also record the following stability result for essential cocycle superrigidity of dense subgroups $\Gamma < G$ of a connected Lie group. The result follows immediately by applying \cite[Lemma 5.1]{Ioa14} to the universal cover. Note that the method of extending cocycle superrigidity results from groups $\Gamma_1$ to larger groups $\Gamma$ with $\Gamma_1$ being weakly normal (in some way) inside $\Gamma$ goes back to \cite[Proposition 3.6]{Pop05b}.

\begin{proposition}\label{prop.almost-normal-stability}
Let $G$ be a connected Lie group and let $\Gamma_1 < G$ be a dense subgroup that is essentially cocycle superrigid with countable target groups. Assume that $\Gamma < G$ is a countable subgroup with $\Gamma_1 < \Gamma$ and assume that $\Gamma$ is generated by elements $g \in \Gamma$ such that $g \Gamma_1 g^{-1} \cap \Gamma_1$ is dense in $G$. Then also $\Gamma < G$ is essentially cocycle superrigid with countable target groups.
\end{proposition}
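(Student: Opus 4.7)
The plan is to reduce, via Proposition \ref{prop.clarify-essential-cocycle-superrigid}, to the universal cover $\pi : \Gtil \to G$, and then invoke \cite[Lemma 5.1]{Ioa14} there. Fix a $1$-cocycle $\om : \Gamma \times G \to \Lambda$ with values in a countable group $\Lambda$. Write $\Gammatil = \pi^{-1}(\Gamma)$ and $\Gammatil_1 = \pi^{-1}(\Gamma_1)$, both dense subgroups of $\Gtil$, and consider the lifted $1$-cocycle
$$\omtil : \Gammatil \times \Gtil \to \Lambda : \omtil(\gamma,g) = \om(\pi(\gamma),\pi(g))\;.$$
By Proposition \ref{prop.clarify-essential-cocycle-superrigid}(2), it suffices to prove that $\omtil$ is cohomologous to a group homomorphism $\Gammatil \to \Lambda$.

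The restriction of $\omtil$ to $\Gammatil_1 \times \Gtil$ is the lift of $\om|_{\Gamma_1 \times G}$. Since $\Gamma_1 < G$ is essentially cocycle superrigid with countable targets, Proposition \ref{prop.clarify-essential-cocycle-superrigid}(2) yields a group homomorphism $\delta_1 : \Gammatil_1 \to \Lambda$ to which $\omtil|_{\Gammatil_1 \times \Gtil}$ is cohomologous.

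Let $g \in \Gamma$ be any of the prescribed generators, so that $g \Gamma_1 g^{-1} \cap \Gamma_1$ is dense in $G$, and pick any lift $\tilde g \in \Gammatil$. Since $\Ker \pi$ is a central subgroup of $\Gtil$, an easy verification gives $\tilde g \, \Gammatil_1 \, \tilde g^{-1} = \pi^{-1}(g \Gamma_1 g^{-1})$, so that
$$\tilde g \, \Gammatil_1 \, \tilde g^{-1} \cap \Gammatil_1 \;=\; \pi^{-1}(g \Gamma_1 g^{-1} \cap \Gamma_1)\;,$$
which is dense in $\Gtil$ because $\pi$ is open and surjective. Since $\Ker \pi \subset \Gammatil_1$, the subgroup $\Gammatil$ is generated, inside $\Gtil$, by $\Gammatil_1$ together with such lifts $\tilde g$.

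We are therefore in the situation of \cite[Lemma 5.1]{Ioa14}, applied to the triple $\Gammatil_1 < \Gammatil < \Gtil$ and to the lifted cocycle $\omtil$: its restriction to $\Gammatil_1$ is cohomologous to the homomorphism $\delta_1$, and $\Gammatil$ is generated by $\Gammatil_1$ and elements $\tilde g$ with $\tilde g \Gammatil_1 \tilde g^{-1} \cap \Gammatil_1$ dense in $\Gtil$. Ioana's lemma thus upgrades $\delta_1$ to a group homomorphism $\Gammatil \to \Lambda$ to which $\omtil$ is cohomologous. A final application of Proposition \ref{prop.clarify-essential-cocycle-superrigid}(2) completes the proof. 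The only nontrivial input is \cite[Lemma 5.1]{Ioa14}; the rest of the argument just exploits centrality of $\Ker \pi$ to transport the density hypothesis from $G$ to its simply connected cover $\Gtil$, where plain cocycle superrigidity arguments apply directly.
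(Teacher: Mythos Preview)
Your proof is correct and follows exactly the route the paper indicates: lift to the universal cover via Proposition~\ref{prop.clarify-essential-cocycle-superrigid}(2), verify that the density hypothesis on the generators transfers to $\Gtil$, and apply \cite[Lemma 5.1]{Ioa14} there. One cosmetic remark: the identity $\tilde g\,\Gammatil_1\,\tilde g^{-1}=\pi^{-1}(g\Gamma_1 g^{-1})$ holds simply because $\Gammatil_1$ is the full preimage $\pi^{-1}(\Gamma_1)$, without invoking centrality; where centrality of $\Ker\pi$ actually matters is in ensuring that the elements of $\Ker\pi$ (needed to complete your lifts to a generating set for $\Gammatil$) themselves satisfy $k\Gammatil_1 k^{-1}\cap\Gammatil_1=\Gammatil_1$, which is dense.
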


We now develop further the result in Proposition \ref{prop.cocycle-superrigid-homogeneous-spaces} and consider general actions $\Gamma \actson G/P$ on homogeneous spaces when $G$ is a connected Lie group, $P < G$ is a closed subgroup and $\Gamma < G$ is a dense subgroup that is essentially cocycle superrigid. If $P$ is connected and the lift of $P$ in the universal cover $\Gtil$ remains connected, one deduces OE-superrigidity from Proposition \ref{prop.cocycle-superrigid-homogeneous-spaces} and Lemma \ref{lem.cocycle-OE-superrigid}. In general for arbitrary closed subgroups $P < G$, which could be a discrete subgroup, or a connected subgroup whose lift in $\Gtil$ is no longer connected, we can still completely describe all stably orbit equivalent actions in the following proposition. The proof of this result is essentially contained in \cite[Theorem 6.1]{Ioa14}.

\begin{proposition}\label{prop.OE-superrigid-homogeneous}
Let $G$ be a connected Lie group with finite center $\cZ(G)$ and let $P < G$ be any closed subgroup. Let $\Gamma < G$ be a countable dense subgroup that is essentially cocycle superrigid with countable targets. Assume that $\cZ(G) \subset \Gamma$ and that the action $\Gamma \actson G/P$ is essentially free. Let $\pi : \Gtil \to G$ be the universal cover.

Given an essentially free, nonsingular, ergodic action $\Lambda \actson (Y,\eta)$, the following are equivalent.
\begin{enumlist}
\item The actions $\Gamma \actson G/P$ and $\Lambda \actson Y$ are stably orbit equivalent.
\item The action $\Lambda \actson Y$ is induced from $\Lambda_0 \actson Y_0$ and there exists a normal subgroup $\Sigma_0 \lhd \Lambda_0$ such that $\Sigma_0 \actson Y_0$ admits a fundamental domain and $\Lambda_0/\Sigma_0 \actson Y_0/\Sigma_0$ is conjugate with $\Gamma/\cZ(G) \actson G/(P \cdot \cZ(G))$.
\item Writing $\Gammatil = \pi^{-1}(\Gamma)$ and $\Ptil = \pi^{-1}(P) \cdot \cZ(\Gtil)$, there exists an open normal subgroup $L \lhd \Ptil$ such that the action $\Lambda \actson Y$ is conjugate to an induction of the action
\begin{equation}\label{eq.my-action}
\frac{\Gammatil \times \Ptil}{(\{e\} \times L)N} \actson \Gtil / L \quad\text{by left-right translation, where $N = \{(g,g) \mid g \in \cZ(\Gtil)\}$.}
\end{equation}
\end{enumlist}
\end{proposition}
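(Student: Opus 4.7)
The plan is to prove (3)$\Rightarrow$(2)$\Rightarrow$(1)$\Rightarrow$(3), following the blueprint of \cite[Theorem 6.1]{Ioa14}; the first two implications are structural checks, while the substance lies in (1)$\Rightarrow$(3).

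For (3)$\Rightarrow$(2), set $\Lambda_0 = (\Gammatil \times \Ptil)/((\{e\}\times L)N)$, $Y_0 = \Gtil/L$, and let $\Sigma_0$ be the image of $\{e\}\times\Ptil$ in $\Lambda_0$. Since conjugation by $(\gamma,h)\in\Gammatil\times\Ptil$ sends $(e,k)$ to $(e,hkh^{-1})$, the subgroup $\{e\}\times\Ptil$ is normal in $\Gammatil\times\Ptil$, so $\Sigma_0$ is normal in $\Lambda_0$. Also $\Sigma_0 \cong \Ptil/L$ is discrete and acts freely by right translation on $Y_0=\Gtil/L$, hence with a fundamental domain. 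The hypothesis $\cZ(G)\subset\Gamma$ gives $\cZ(\Gtil)=\pi^{-1}(\cZ(G))\subset\Gammatil$, which lets us compute $(\{e\}\times\Ptil)\cdot N = \cZ(\Gtil)\times\Ptil$, and therefore $\Lambda_0/\Sigma_0 \cong \Gammatil/\cZ(\Gtil)=\Gamma/\cZ(G)$ acting on $Y_0/\Sigma_0=\Gtil/\Ptil=G/(P\cZ(G))$ by left translation. For (2)$\Rightarrow$(1), induction from $\Lambda_0\actson Y_0$ to $\Lambda\actson Y$ preserves stable orbit equivalence, and quotienting by $\Sigma_0$ does too since $\Sigma_0$ acts with a fundamental domain; finally $\Gamma/\cZ(G)\actson G/(P\cZ(G))$ is stably orbit equivalent to $\Gamma\actson G/P$ because $\cZ(G)$ acts on $G/P$ with constant finite stabilizer $\cZ(G)\cap P$, giving an amplification of factor $|\cZ(G)/(\cZ(G)\cap P)|$.

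For (1)$\Rightarrow$(3), Lemma \ref{lem.correspondence} produces a Borel $1$-cocycle $\om:\Gamma\times G/P\to\Lambda$ whose skew product $\cR_\om$ is of type~I and such that $\Lambda\actson Y$ is conjugate to $\Lambda$ acting on $(G/P\times\Lambda)/\cR_\om$. Lift $\om$ to the $1$-cocycle $\om'':\Gammatil\times\Gtil\to\Lambda$, $\om''(\gamma,g) = \om(\pi(\gamma),\pi(g)P)$, for the translation action. By essential cocycle superrigidity together with Proposition \ref{prop.clarify-essential-cocycle-superrigid}(2), there exist a group homomorphism $\delta:\Gammatil\to\Lambda$ and a Borel map $\vphi:\Gtil\to\Lambda$ with $\om''(\gamma,g)=\vphi(\gamma g)\,\delta(\gamma)\,\vphi(g)^{-1}$.

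Since $\om''(\gamma,\cdot)$ is invariant under right translation by $\pi^{-1}(P)$, and $\cZ(\Gtil)\subset\Gammatil$ acts via $\delta$ (so $\om''(k,g)=\vphi(gk)\delta(k)\vphi(g)^{-1}$ for central $k$), a direct calculation yields, for each $k\in\Ptil$, an element $c(k)\in\Lambda$ such that $\vphi_k(g):=\vphi(gk)\,c(k)^{-1}$ is again an untwisting of $\om''$ via $\delta$. Density of $\Gammatil<\Gtil$ makes the left translation action of $\Gammatil$ on $\Gtil$ ergodic, and the uniqueness argument from the proof of Proposition \ref{prop.cocycle-superrigid-homogeneous-spaces} forces any two untwistings via the same $\delta$ that agree on a nonnegligible set to coincide a.e.; hence $L:=\{k\in\Ptil \mid \vphi_k = \vphi \ \text{a.e.}\}$ is a clopen subgroup of $\Ptil$. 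Normality of $L$ follows from the cocycle identity $\xi_{hk}(g) = \xi_h(g)\,\xi_k(gh)$ for $\xi_k(g):=\vphi(g)^{-1}\vphi(gk)$, combined with the twisted $\Gammatil$-equivariance of $\xi_k$ inherited from the untwisting relation. Finally $\vphi$ descends to a Borel map $\Gtil/L\to\Lambda$, and reversing Lemma \ref{lem.correspondence} identifies $\Lambda\actson Y$ as an induction of \eqref{eq.my-action} along the homomorphism $(\Gammatil\times\Ptil)/((\{e\}\times L)N)\to\Lambda$, $(\gamma,k)\mapsto\delta(\gamma)\,c(k)$. The main obstacles are the normality of $L$ and verifying that the correction $c$ together with the diagonal subgroup $N$ fit exactly as in the quotient defining $\Lambda_0$; both reduce to finite bookkeeping in the central extensions, since $\cZ(G)$ is finite.
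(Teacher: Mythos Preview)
Your scaffolding (3)$\Rightarrow$(2)$\Rightarrow$(1) is fine, but (1)$\Rightarrow$(3) has two genuine gaps.

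First, the existence of a \emph{constant} $c(k)\in\Lambda$ with $\vphi(\,\cdot\,k)\,c(k)^{-1}$ again untwisting via $\delta$ is not a ``direct calculation''. For $k\in\pi^{-1}(P)$ the right $k$-invariance of $\om''(\gamma,\cdot)$ gives this with $c(k)=e$, but for $k\in\cZ(\Gtil)\setminus\pi^{-1}(P)$ (which occurs whenever $\cZ(G)\not\subset P$) the function $g\mapsto\om''(k,g)=\om(\pi(k),\pi(g)P)$ depends on $g$, and no constant $c(k)$ need exist. The paper handles this by first reducing to the case $\cZ(G)=\{e\}$ (so $\Ptil=\pi^{-1}(P)$ and $\cZ(\Gtil)=\Ker\pi\subset\Ptil$), and then---crucially---uses the diagonal invariance argument of \cite[Lemma~5.1]{Ioa14} to prove that $x\mapsto\vphi(xh^{-1})\vphi(x)^{-1}$ is essentially constant for every $h\in\Ptil$. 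This constancy is what makes $\delta$ extend to a continuous homomorphism on all of $H=(\Gammatil\times\Ptil)/N$, after which $L:=\Ptil\cap\Ker\delta$ is automatically open and normal. Your argument for openness of $L$ is correct, but your normality argument via the cocycle identity for $\xi_k$ is circular without first knowing that each $\xi_k$ is constant; the ``twisted $\Gammatil$-equivariance'' you mention is exactly the ingredient needed to prove constancy, but you never carry that step out.

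Second, and more seriously, you omit entirely the proof that the resulting homomorphism $\delta_1:\Gamma_1=(\Gammatil\times\Ptil)/((\{e\}\times L)N)\to\Lambda$ is \emph{injective}. Lemma~\ref{lem.correspondence} only identifies $\Lambda\actson Y$ with an induction of $\Gamma_1/\Ker\delta_1$ acting on $X_1/\Ker\delta_1$; to get precisely the action \eqref{eq.my-action} you must show $\Ker\delta_1=\{e\}$. This is not ``finite bookkeeping'': the paper argues that $\Ker\delta_1\cap(\Gammatil/L_0)$ is discrete in $\Gtil/L_0$ (using that $\Ker\delta_1$ acts with a fundamental domain), then uses density of $\Gammatil$ in $\Gtil$ together with connectedness of $\Gtil$ to force any $(g,k)N_1\in\Ker\delta_1$ to satisfy $g\in\cZ(\Gtil)$, whence $(g,k)N_1\in\Ptil/L$, and finally $L=\Ptil\cap\Ker\delta$ gives triviality. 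This is the substantive rigidity step in (1)$\Rightarrow$(3) and cannot be skipped.
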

\begin{proof}
Since $\cZ(G)$ is finite and $\cZ(G) \subset \Gamma$, the actions $\Gamma \actson G/P$ and $\Gamma/\cZ(G) \actson G/(P \cdot \cZ(G))$ are stably orbit equivalent. The composition of $\pi : \Gtil \to G$ with the quotient map $G \to G/\cZ(G)$ is the universal cover of $G/\cZ(G)$. We may thus replace $G$ by $G/\cZ(G)$, $\Gamma$ by $\Gamma/\cZ(G)$ and $P$ by $(P \cdot \cZ(G))/\cZ(G)$, and assume throughout the proof that $G$ has trivial center. Then, $\cZ(\Gtil) = \Ker \pi$ and $\Ptil = \pi^{-1}(P)$.

3 $\Longrightarrow$ 2. Denote by $\Gamma_1 \actson X_1$ the action defined in \eqref{eq.my-action}. Write $N_1 = (\{e\} \times L)N$. Note that the action $\Gamma_1 \actson X_1$ is essentially free: if $(g,h)N_1 \in \Gamma_1$ fixes a nonnegligible part of $X_1$, applying the factor map $\Gtil/L \to \Gtil/\Ptil = G/P$ and using that $\Gamma \actson G/P$ is essentially free, it follows that $\pi(g) = e$. Thus, $g \in \Ker \pi = \cZ(\Gtil)$ and $(g,h)N_1 = (e,h')N_1$ for some $h' \in \Ptil$. The action of $\Ptil/L$ on $\Gtil/L$ by right translation is free, so that $h' \in L$. This means that $(e,h')N = N$.

We can view $\Ptil/L$ as a normal subgroup of $\Gamma_1$. The action of $\Ptil/L$ on $X_1 = \Gtil/L$ is given by right translation. Since $\Ptil$ is a closed subgroup of $\Gtil$, the action of $\Ptil/L$ on $X_1$ admits a fundamental domain and the quotient space is given by $\Gtil/\Ptil = G/P$. The quotient group $\Gamma_1/(\Ptil/L)$ is naturally isomorphic with $\Gammatil/\cZ(\Gtil) = \Gamma$. Therefore, any action $\Lambda \actson Y$ that is conjugate to an induction of $\Gamma_1 \actson X_1$ satisfies the conclusions of point~2. We have thus proven the implication 3 $\Longrightarrow$ 2.

2 $\Longrightarrow$ 1 holds trivially.

1 $\Longrightarrow$ 3. By Lemma \ref{lem.correspondence}, we can take a $1$-cocycle $\om : \Gamma \times G/P \to \Lambda$ such that the skew product equivalence relation $\cR_\om$ on $G/P \times \Lambda$ is of type~I and the action $\Lambda \actson Y$ is conjugate with $\Lambda \actson (G/P \times \Lambda)/\cR_\om$.

Write $H = (\Gammatil \times \Ptil)/N$ and consider the action $H \actson \Gtil$ by left-right translation. We view $\Gammatil$ and $\Ptil$ as commuting subgroups of $H$ with $\Gammatil \cap \Ptil = \cZ(\Gtil)$. Define the $1$-cocycle
$$\Om : H \times \Gtil \to \Lambda : \Om((g,h)N,x) = \om(\pi(g),\pi(x)P) \; .$$
Since $\Gamma < G$ is essentially cocycle superrigid with countable targets, the restriction of $\Om$ to $\Gammatil \times \Gtil$ is cohomologous to a group homomorphism. We thus find a Borel map $\vphi : \Gtil \to \Lambda$ and a group homomorphism $\delta : \Gammatil \to \Lambda$ such that the $1$-cocycle
\begin{equation}\label{eq.cocycle-Omnul}
\Om_0 : H \times \Gtil \to \Lambda : \Om_0((g,h)N,x) = \vphi(gxh^{-1}) \, \om(\pi(g),\pi(x)P) \, \vphi(x)^{-1}
\end{equation}
satisfies $\Om_0(g,x) = \delta(g)$ for all $g \in \Gammatil$ and a.e.\ $x \in \Gtil$. When $h \in \Ptil$, define $S \subset \Gtil \times \Gtil$ by
$$S := \{(x,y) \in \Gtil \times \Gtil \mid \Om_0(h,x) = \Om_0(h,y) \} \; .$$
As in the proof of \cite[Lemma 5.1]{Ioa14}, it first follows that $S$ is invariant under the transformation $(x,y) \mapsto (gx,gy)$ for all $g \in \Gammatil$ and then that $S$ has a complement of measure zero. So, for every $h \in \Ptil$, the map $x \mapsto \Om_0(h,x)$ is essentially constant. By the cocycle relation, for every $(g,h)N \in H$, the map $x \mapsto \Om_0((g,h)N,x)$ is essentially constant. Therefore, $\delta$ extends to a continuous group homomorphism $\delta : H \to \Lambda$ satisfying
\begin{equation}\label{eq.intermediate}
\vphi(gxh^{-1}) \, \om(\pi(g),\pi(x)P) \, \vphi(x)^{-1} = \delta((g,h)N) \quad\text{for all $(g,h)N \in H$ and a.e.\ $x \in \Gtil$.}
\end{equation}
Since $\delta$ is continuous and $\Lambda$ is discrete, we can define the open normal subgroup $L \lhd \Ptil$ by $L := \Ptil \cap \Ker \delta$. It follows from \eqref{eq.intermediate} that $\vphi(xh^{-1}) = \vphi(x)$ for all $h \in L$ and a.e.\ $x \in \Gtil$. So we may view $\vphi$ as a Borel map from $\Gtil/L$ to $\Lambda$.

Define $N_1 = (\{e\} \times L)N$ and $\Gamma_1 = (\Gammatil \times \Ptil)/N_1$. Since $L \subset \Ker \delta$, we can define the group homomorphism $\delta_1 : \Gamma_1 \to \Lambda$ such that $\delta_1((g,h)N_1) = \delta((g,h)N)$. Write $X_1 = \Gtil/L$. Consider the action $\Gamma_1 \actson X_1$ by left-right translation. Define the $1$-cocycle
$$\Om_1 : \Gamma_1 \times X_1 \to \Lambda : \Om_1((g,h)N_1,xL) = \om(\pi(g),\pi(x)P) \; .$$
We have proven that $\Om_1$ is cohomologous to the group homomorphism $\delta_1 : \Gamma_1 \to \Lambda$.

In the proof of 3 $\Longrightarrow$ 2, we have already seen that the action $\Gamma_1 \actson X_1$ is essentially free and that the restriction of this action to the normal subgroup $\Ptil/L$ of $\Gamma_1$ admits a fundamental domain $V \subset X_1$ with quotient space $G/P$. Let $W \subset G/P \times \Lambda$ be a fundamental domain for $\cR_\om$. Since the restriction of $\Om_1$ to $\Ptil/L \times X$ is identically equal to $e$, it follows that $(V \times \Lambda) \cap (\pi \times \id)^{-1}(W)$ is a fundamental domain for the skew product equivalence relation $\cR_{\Om_1}$ on $X_1 \times \Lambda$. So, $\cR_{\Om_1}$ is of type~I and there is a natural identification
$$(X_1 \times \Lambda)/\cR_{\Om_1} = (G/P \times \Lambda)/\cR_\om \; .$$
So, the action $\Lambda \actson Y$ is conjugate to the action $\Lambda \actson (X_1 \times \Lambda)/\cR_{\Om_1}$.

As in the proof of Lemma \ref{lem.cocycle-OE-superrigid}, since $\Om_1$ is cohomologous to the group homomorphism $\delta_1$, it follows that the action of $\Ker \delta_1$ on $X_1$ admits a fundamental domain and that $\Lambda \actson Y$ is conjugate to an induction of $\Gamma_1 / \Ker \delta_1 \actson X_1 / \Ker \delta_1$. We prove that $\Ker \delta_1 = \{e\}$.

Write $L_0 = L \cap \cZ(\Gtil)$ and view $\Gammatil/L_0$ as a subgroup of $\Gamma_1$. Denote $\Gamma_2 = \Ker \delta_1 \cap (\Gammatil/L_0)$. We claim that $\Gamma_2$ is a discrete subgroup of $\Gtil/L_0$. Assume the contrary and let $g_n \in \Gamma_2$ be a sequence of mutually distinct elements such that $g_n \to g \in \Gtil/L_0$. Let $D \subset X_1 = \Gtil/L$ be a fundamental domain for the action of $\Ker \delta_1$. Let $\mu$ be a probability measure on $X_1$ that has the same null sets as the Haar measure. For all $n \neq m$, we have that $\mu(g_n D \cap g_m D) = 0$. By first taking $m \to \infty$ and then $n \to \infty$, it follows that $0 = \mu(gD \cap gD) = \mu(gD)$, so that $\mu(D) = 0$, which is absurd. So, $\Gamma_2 < \Gtil/L_0$ is a discrete subgroup. Define the subgroup $\Gamma_3 < \Gtil$ with $L_0 \subset \Gamma_3$ and $\Gamma_3/L_0 = \Gamma_2$. Then also $\Gamma_3 < \Gtil$ is a discrete subgroup.

Next assume that $(g,k)N_1$ is an arbitrary element of $\Ker \delta_1$. Since $\Ker \delta_1$ is a normal subgroup of $\Gamma_1$, we get for every $h \in \Gammatil$ that $(hgh^{-1},k)N_1 \in \Ker \delta_1$ and thus, $(hgh^{-1}g^{-1},e)N_1 \in \Ker \delta_1$. Using the notation of the previous paragraph, this means that $hgh^{-1}g^{-1} \in \Gamma_3$ for all $h \in \Gammatil$. Since $\Gammatil < \Gtil$ is dense and $\Gamma_3 < \Gtil$ is closed, we conclude that $hgh^{-1}g^{-1} \in \Gamma_3$ for all $h \in \Gtil$. When $h \to e$, we get that $hgh^{-1}g^{-1} \to e$. Since $\Gamma_3 < \Gtil$ is discrete, it follows that $hgh^{-1}g^{-1} = e$ for all $h$ in a neighborhood of $e$. So, the centralizer of $g$ in $\Gtil$ is an open subgroup, and hence equal to $\Gtil$ by connectedness. We have proven that $g \in \cZ(\Gtil)$. Since this holds for an arbitrary element $(g,k)N_1$ of $\Ker \delta_1$ and viewing $\Ptil/L$ as a subgroup of $\Gamma_1$, we conclude that $\Ker \delta_1 \subset \Ptil/L$. By definition of $L$, we have that $\Ptil \cap \Ker \delta = L$ and thus, $(\Ptil/L) \cap \Ker \delta_1 = \{e\}$. So, $\delta_1$ is an injective group homomorphism and we have proven that $\Lambda \actson Y$ is conjugate to an induction of $\Gamma_1 \actson X_1$.
\end{proof}

\begin{corollary}\label{cor.OE-superrigid-homogeneous}
Make the same assumptions as in Proposition \ref{prop.OE-superrigid-homogeneous}. Assume moreover that $\pi^{-1}(P)$ is connected. Then an essentially free, nonsingular, ergodic action $\Lambda \actson (Y,\eta)$ is stably orbit equivalent with $\Gamma \actson G/P$ if and only if $\Lambda \actson Y$ is conjugate to an induction of $\Gamma/\Sigma \actson G/(P \cdot \Sigma)$ for a subgroup $\Sigma < \cZ(G)$.
\end{corollary}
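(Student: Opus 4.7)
The plan is to derive the corollary from Proposition~\ref{prop.OE-superrigid-homogeneous} by using the connectedness of $\pi^{-1}(P)$ to pin down the open normal subgroup $L \lhd \Ptil$ that appears in its third point.

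For the forward direction, Proposition~\ref{prop.OE-superrigid-homogeneous}(3) furnishes an open normal subgroup $L \lhd \Ptil$ such that $\Lambda \actson Y$ is conjugate to an induction of $\Gamma_1 := (\Gammatil \times \Ptil)/((\{e\} \times L)N)$ acting on $\Gtil/L$ by left-right translation. Since $\cZ(\Gtil) = \pi^{-1}(\cZ(G))$ is discrete and commutes with $\pi^{-1}(P)$, the connected component of the identity in $\Ptil = \pi^{-1}(P)\cdot\cZ(\Gtil)$ equals $\pi^{-1}(P)$; the open subgroup $L$ therefore contains $\pi^{-1}(P)$, so $L = \pi^{-1}(P\cdot\Sigma)$ for some subgroup $\Sigma \subset \cZ(G)$, and $\pi$ identifies $\Gtil/L$ canonically with $G/(P\cdot\Sigma)$. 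Essential freeness of $\Gamma \actson G/P$ combined with $\cZ(G) \subset \Gamma$ forces $\cZ(G)\cap P = \{e\}$, so the decomposition $h = pz$ below is unique modulo $\Ker\pi$.

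I would then identify $\Gamma_1 \actson \Gtil/L$ with $\Gamma/\Sigma \actson G/(P\cdot\Sigma)$ as follows. Writing every $h \in \Ptil$ uniquely (up to $\Ker\pi$) as $h = pz$ with $p \in \pi^{-1}(P)$, $z \in \cZ(\Gtil)$, set $\Psi(g,h) := \pi(g)\pi(z)^{-1}\Sigma$; using centrality of $\cZ(\Gtil)$ in $\Gtil$ one checks that $\Psi$ is a group homomorphism $\Gammatil \times \Ptil \to \Gamma/\Sigma$, which vanishes on $\{e\} \times L$ (because then $\pi(z) \in \Sigma$) and on $N = \{(g,g) : g \in \cZ(\Gtil)\}$, and thus descends to $\overline\Psi : \Gamma_1 \to \Gamma/\Sigma$. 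The map $\gamma\Sigma \mapsto (\widetilde\gamma,e)N_1$ gives a well-defined section, and a direct computation with the decomposition $h = pz$ shows injectivity, so $\overline\Psi$ is an isomorphism. Under it, the left-right action $(g,h)N_1 \cdot xL = gxh^{-1}L$ translates to $xP\Sigma \mapsto \pi(g)\pi(z)^{-1}\pi(x)P\Sigma$ (because $\pi(p)^{-1} \in P$ is absorbed into $P\Sigma$ and $\pi(z)^{-1} \in \cZ(G)$ passes through $\pi(x)$), which is exactly the left-translation action of $\Gamma/\Sigma$ on $G/(P\cdot\Sigma)$.

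For the converse, $\Sigma \subset \cZ(G) \subset \Gamma$ is finite and central with $\Sigma \cap P \subset \cZ(G) \cap P = \{e\}$, so $\Sigma \actson G/P$ is free; it admits a fundamental domain that projects bijectively onto $G/(P\cdot\Sigma)$, and the restriction of $\cR(\Gamma \actson G/P)$ to this fundamental domain coincides with $\cR(\Gamma/\Sigma \actson G/(P\cdot\Sigma))$. Hence $\Gamma \actson G/P$ is stably orbit equivalent to $\Gamma/\Sigma \actson G/(P\cdot\Sigma)$, and since induction preserves stable orbit equivalence, every action induced from $\Gamma/\Sigma \actson G/(P\cdot\Sigma)$ is stably orbit equivalent to $\Gamma \actson G/P$. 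The main obstacle lies in the explicit construction of $\overline\Psi$ and the verification that the left-right action transports to the left-translation action under this identification; careful bookkeeping with the finite center $\cZ(G)$ is required, but once this is in hand the remainder is formal.
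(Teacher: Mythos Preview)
Your proof is correct and follows essentially the same approach as the paper. The paper's proof is a one-liner: it observes that connectedness of $\pi^{-1}(P)$ forces any open subgroup $L \lhd \Ptil$ to contain $\pi^{-1}(P)$, hence $L = \pi^{-1}(P\cdot\Sigma)$ for some $\Sigma < \cZ(G)$, and then simply asserts that the action \eqref{eq.my-action} is isomorphic to $\Gamma/\Sigma \actson G/(P\cdot\Sigma)$, leaving both the isomorphism and the converse direction implicit (the latter being covered by the equivalence in Proposition~\ref{prop.OE-superrigid-homogeneous}). You carry out exactly this strategy but supply the explicit isomorphism $\overline\Psi$ and verify the intertwining of the actions, and you give a direct argument for the converse via a fundamental domain for $\Sigma \actson G/P$; this is more detailed than the paper but not genuinely different.
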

\begin{proof}
Since $\pi^{-1}(P)$ is connected, every open subgroup of $\Ptil$ contains $\pi^{-1}(P)$ and is thus of the form $L = \pi^{-1}(P \cdot \Sigma)$ for a subgroup $\Sigma < \cZ(G)$. Then, the action in \eqref{eq.my-action} is isomorphic with $\Gamma/\Sigma \actson G/ (P \cdot \Sigma)$.
\end{proof}

\section{Examples; proofs of Theorem~\ref{thm.main-hyperbolic-plane} and Corollary \ref{cor.OE-superrigid-homogeneous-lattice}}\label{sec.examples}

Using linear algebraic groups and $S$-arithmetic lattices, one can provide numerous examples to which Theorem \ref{thm.main-cocycle-superrigid} applies. Rather than formulating an abstract result, we have chosen to discuss in detail two concrete cases: dense subgroups of $\SL(n,\R)$ and dense subgroups of $\SO(n,m,\R)$. These examples will then be used to prove OE-superrigidity and W$^*$-superrigidity for natural infinite measure preserving group actions on homogeneous spaces.

Given a field $K$ of characteristic zero, we denote by $B_{n,m}$ the bilinear form on $K^{n+m}$ given by
\begin{equation}\label{eq.bilinear}
B_{n,m}(x,y) = x_1 y_1 + \cdots + x_n y_n - x_{n+1} y_{n+1} - \cdots - x_{n+m} y_{n+m} \; .
\end{equation}
We denote by $\SO(n,m,K) < \GL(K^{n+m})$ the group of matrices preserving $B_{n,m}$ and having determinant $1$. Given any subring $\cO < K$, we consider the subgroup $\SO(n,m,\cO) < \SO(n,m,K)$ of matrices having all entries in $\cO$. In the case of $K = \R$, we denote by $\SO^+(n,m,\R)$ the connected component of the identity in $\SO(n,m,\R)$. When $m \geq 1$, this subgroup has index $2$. When $m=0$, we have $\SO^+(n,\R) = \SO(n,\R)$.

For every prime $p$ and positive integer $n$, we denote by $W_p(n)$ the maximal dimension of a totally isotropic subspace of $(\Q_p^n,B_{n,0})$. By convention, we put $W_p(0) = 0$. For our purposes, it only matters when $W_p(n)$ equals $0$ or $1$ or is at least $2$. We have $W_p(0) = W_p(1) = 0$ for all primes $p$ and, by e.g.\ \cite[Propositions 1 and 2]{Dia93}, we have
\begin{align*}
& W_p(2) = \begin{cases} 0 &\;\;\text{if $p \not\equiv 1 \mathbin{\operatorname{mod}} 4$,}\\ 1 &\;\;\text{if $p \equiv 1 \mathbin{\operatorname{mod}} 4$,}\end{cases} \quad\quad W_p(n) = \begin{cases}\bigl\lfloor \frac{n-3}{2}\bigr\rfloor &\;\;\text{if $p=2$,}\\ \bigl\lfloor \frac{n}{2}\bigr\rfloor &\;\;\text{if $p$ is odd,}\end{cases} \;\;\text{for $n=3,4,5$,} \\
& W_p(n) \geq 2 \;\;\text{for $n \geq 6$ and all primes $p$.}
\end{align*}
Whenever $n \geq m \geq 0$ with $n+m \geq 3$, we define
$$r_p(n,m) = \begin{cases} m+W_p(n-m) &\;\;\text{if $n+m \neq 4$,}\\
W_p(3) &\;\;\text{if $(n,m) = (4,0)$,}\\
1 &\;\;\text{if $(n,m) = (3,1)$ or $(2,2)$.}\end{cases}$$
Fix $n \geq m \geq 0$ with $n+m \geq 3$. If $n+m=3$ or $n + m \geq 5$, the group $\SO(n,m,\Q_p)$ is almost $\Q_p$-simple. By \cite[Proposition 2.14]{PR94}, its $\Q_p$-rank equals $r_p(n,m)$. When $n+m = 4$, exceptional isomorphisms may occur and $r_p(n,m)$ equals the minimal $\Q_p$-rank of the almost $\Q_p$-simple components of $\SO(n,m,\Q_p)$ (see the proof of Proposition \ref{prop.p-adic} for further details).

Theorem \ref{thm.main-cocycle-superrigid} then provides the following natural dense subgroups of $\SL(n,\R)$ and $\SO^+(n,m,\R)$ that are essentially cocycle superrigid.

\begin{proposition}\label{prop.p-adic}
Let $\cS$ be any set of prime numbers. Define the following subgroups.
\begin{enumlist}
\item For $n \geq 2$, consider the subgroup $\Gamma = \SL(n,\Z[\cS^{-1}])$ of $G = \SL(n,\R)$. Put $r = (n-1)|\cS|$.
\item For $n \geq m \geq 0$ with $n + m \geq 3$, consider the subgroup $\Gamma = \SO^+(n,m,\Z[\cS^{-1}])$ of $G = \SO^+(n,m,\R)$. Put $r = \sum_{p \in \cS} r_p(n,m)$.
\end{enumlist}
Then the following holds.
\begin{itemlist}
\item If $r \geq 2$, then $\Gamma < G$ is dense and the translation action $\Gamma \actson G$ is strongly ergodic and essentially cocycle superrigid with countable targets.
\item In the following cases with $r=1$, the translation action $\Gamma \actson G$ is strongly ergodic and its orbit equivalence relation is treeable. So, $\Gamma < G$ is not essentially cocycle superrigid.
\begin{itemlist}
\item $\SL(2,\Z[1/p]) < \SL(2,\R)$ for a single prime $p$.
\item $\SO(3,\Z[\cS^{-1}]) < \SO(3,\R)$ if $|\cS \setminus \{2\}| = 1$.
\item $\SO^+(3,1,\Z[1/p]) < \SO^+(3,1,\R)$ for a single prime $p \not\equiv 1 \mathbin{\operatorname{mod}} 4$.
\item $\SO(5,\Z[1/2]) < \SO(5,\R)$.
\end{itemlist}
\item If $r = 0$, then $\Gamma < G$ is a lattice and the translation action $\Gamma \actson G$ is of type~I.
\end{itemlist}
\end{proposition}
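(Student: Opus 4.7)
The plan is to realize $\Gamma$ as (essentially) an irreducible $S$-arithmetic lattice and then apply Theorem~\ref{thm.main-cocycle-superrigid}. Let $\mathbf{G}$ be the algebraic $\Q$-group at hand (either $\SL_n$, or the spin cover of $\SO(n,m)$), set $G_p = \mathbf{G}(\Q_p)$ and $H = \prod_{p \in \cS} G_p$. By Borel--Harish-Chandra together with strong approximation (Eichler--Kneser for $\SL_n$ and for $\Spin(n,m)$, using here that $n+m \geq 3$), the $\Z[\cS^{-1}]$-points of $\mathbf{G}$ embed diagonally as a lattice in $G \times H$ whose projection to $G$ is dense whenever $\cS \neq \emptyset$ and whose projection to any individual factor is injective. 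In the orthogonal case, $\Gamma = \SO^+(n,m,\Z[\cS^{-1}])$ differs from the image of this lattice only by a central finite-index modification coming from the covering $\Spin \to \SO^+$, which is harmless for all conclusions below. If $r = 0$ then necessarily $\cS = \emptyset$ (for $n+m \geq 3$ every prime contributes a strictly positive summand to $r$), so $\Gamma$ is the classical lattice $\SL(n,\Z)$, respectively $\SO^+(n,m,\Z)$, in $G$; the translation action then has a fundamental domain and is of type~I.

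A quick rank computation gives that the $\Q_p$-rank of $\SL(n,\Q_p)$ equals $n-1$, while the $\Q_p$-rank of $\SO^+(n,m,\Q_p)$ equals $m + W_p(n-m)$, obtained by splitting off $m$ obvious hyperbolic planes from the mixed-sign part of $B_{n,m}$ and then taking the Witt index of the remaining $(n-m)$-dimensional positive definite form. Summing over $p \in \cS$, the total $\Q_p$-rank of $H$ is exactly $r$, and each individual $G_p$ is noncompact and semisimple.

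Assume $r \geq 2$. If $|\cS| \geq 2$, pick $p_1 \in \cS$ and let $H_1 = G_{p_1}$, $H_2 = \prod_{p \in \cS \setminus \{p_1\}} G_p$; both are noncompact semisimple, and by the spectral gap property of irreducible $S$-arithmetic lattices (Howe--Moore combined with irreducibility), each $H_i$ acts strongly ergodically on $(G \times H)/\Gamma$, so case~1 of Theorem~\ref{thm.main-cocycle-superrigid} applies. If $|\cS| = 1$, then the unique $G_p$ has $\Q_p$-rank $\geq 2$ and hence property~(T) by Kazhdan's theorem, so case~2 of Theorem~\ref{thm.main-cocycle-superrigid} applies. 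Either way, $\Gamma < G$ is essentially cocycle superrigid with countable targets. Strong ergodicity of $\Gamma \actson G$ is transferred from strong ergodicity of the $H$-action on $(G \times H)/\Gamma$ via the cross-section correspondence of Section~\ref{sec.cross-section}: the orbit equivalence relation of $\Gamma \actson G$ (by right translation) is stably isomorphic to the cross-section equivalence relation of this $H$-action.

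Finally consider $r = 1$: then $\cS = \{p\}$ and $G_p$ has $\Q_p$-rank one, so by \cite[Example A.10]{CGMTD21} the Bruhat--Tits building of $G_p$ is a tree and $G_p$ is strongly treeable. Applying this to the essentially free ergodic pmp action $G_p \actson (G \times H)/\Gamma$ and again using the cross-section correspondence, the orbit equivalence relation $\cR(\Gamma \actson G)$ is treeable. Strong ergodicity still holds since $G_p$ is noncompact simple, hence nonamenable, so has spectral gap on the lattice quotient. By Section~\ref{sec.treeable}, a treeable ergodic equivalence relation preserving a nonzero $\sigma$-finite measure is orbit equivalent to a free group action and so cannot satisfy any cocycle or OE superrigidity property; in particular essential cocycle superrigidity fails. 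I expect the only genuinely technical nuisance to be the strong approximation step in the orthogonal case, where one must route through the spin cover and then descend to $\SO^+$ up to a central finite-index correction; the remaining ingredients (irreducible $S$-arithmeticity, Kazhdan/spectral gap, strong treeability of rank-one $p$-adic groups) are a clean packaging of well-established facts.
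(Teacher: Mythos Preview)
Your approach is essentially the paper's: realize $\Gamma$ as an $S$-arithmetic lattice via the simply connected cover, compute local ranks, and feed the result into Theorem~\ref{thm.main-cocycle-superrigid} (case~1 when at least two noncompact local factors are available, case~2 when a single factor has rank $\geq 2$ and hence property~(T)), with the $r=1$ case handled by strong treeability of rank-one $p$-adic groups through the cross-section correspondence. That said, there are three genuine gaps.

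\textbf{Infinite $\cS$.} The proposition allows $\cS$ to be infinite, and your construction $H = \prod_{p\in\cS} G_p$ does not yield a lcsc group in that case (nor does $\Gamma$ sit as a lattice in $G\times H$ in the naive sense). The paper handles this by first proving the result for a finite $\cS_1\subset\cS$ with $r(\cS_1)\geq 2$, and then passing to the full $\Gamma$ via Proposition~\ref{prop.almost-normal-stability}, using that $g\Gamma_1 g^{-1}\cap\Gamma_1$ remains dense in $G$ for every $g\in\Gamma$. You need this extra step.

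\textbf{Spectral gap.} Howe--Moore gives decay of matrix coefficients and hence mixing, but it does \emph{not} by itself give strong ergodicity (spectral gap) of $H_i\actson (G\times H)/\Gamma$. What is needed is property~$(\tau)$ for the congruence family, i.e.\ Clozel's ``superstrong approximation'' \cite{Clo02}; the paper invokes this explicitly. Your justification as written is insufficient.

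\textbf{Compact local factors.} Your claims ``$r=0\Rightarrow\cS=\emptyset$'' and ``$r=1\Rightarrow|\cS|=1$'' are false in the orthogonal case with $m=0$: e.g.\ $\SO(3)$ with $\cS=\{2\}$ has $W_2(3)=0$, so $r=0$ while $\cS\neq\emptyset$; similarly $\cS=\{2,3\}$ gives $r=1$ with $|\cS|=2$. The fix, as in the paper, is to work only with $\cS_0=\{p\in\cS: G_p\text{ noncompact}\}$, since compact factors contribute nothing to $r$ and can be absorbed (the lattice in $G\times\prod_{p\in\cS_0}G_p$ is commensurable with your $\Gamma$). This also repairs your $r\geq 2$ split: you must choose $p_1$ with $G_{p_1}$ noncompact, otherwise $H_1$ need not act ergodically at all.
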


\begin{proof}
Throughout the proof, we write $\cO = \Z[\cS^{-1}] \subset \Q$.

1.\ Assume that $\cS$ is a finite set of primes, $\Gamma = \SL(n,\cO) < G = \SL(n,\R)$ and $r \geq 2$. Consider the diagonal embedding
$$\Delta : \SL(n,\cO) \to L = \SL(n,\R) \times \prod_{p \in \cS} \SL(n,\Q_p)$$
and write $\Gamma_1 = \Delta(\SL(n,\cO))$. Viewing $\SL_n$ as a linear algebraic group that is semisimple, connected, simply connected and absolutely almost simple, it follows from \cite[Theorem 5.7]{PR94} that $\Gamma_1 < L$ is a lattice. It follows from strong approximation (see e.g.\ \cite[Theorem 7.12]{PR94}) that for every $p \in \cS$, the action $\SL(n,\Q_p) \actson L/\Gamma_1$ is ergodic. In particular, if $\cS \neq \emptyset$, we get that $\SL(n,\cO)$ is dense in $\SL(n,\R)$. It follows from ``superstrong approximation'' (see \cite[Th\'{e}or\`{e}me 3.1]{Clo02}) that $\SL(n,\Q_p) \actson L/\Gamma_1$ is strongly ergodic for every $p \in \cS$.

Since $r \geq 2$, we have that $\cS \neq \emptyset$. If $|\cS| \geq 2$, the first point of Theorem \ref{thm.main-cocycle-superrigid} applies. If $\cS = \{p\}$, we have that $n \geq 3$ and the group $\SL(n,\Q_p)$ has property (T) (see e.g.\ \cite[Theorem 1.4.15]{BHV08}), so that the second point of Theorem \ref{thm.main-cocycle-superrigid} applies. Since the projection of $\Gamma_1$ to $\SL(n,\R)$ equals $\Gamma = \SL(n,\cO)$, we conclude that $\Gamma < G$ is essentially cocycle superrigid with countable targets.

2.\ Assume that $\Gamma = \SL(n,\cO) < G = \SL(n,\R)$ and $r \in \{0,1\}$. When $r=1$, we have seen in 1 that $\Gamma = \SL(2,\Z[1/p])$ sits as a lattice in $G \times G_1$ with $G_1 = \SL(2,\Q_p)$ and with $G_1 \actson (G \times G_1)/\Gamma$ being strongly ergodic. The group $G_1$ is strongly treeable (see Section \ref{sec.treeable} for background and references). The action of $\Gamma \times G_1$ on $G \times G_1$, where $\Gamma$ acts by left translation and $G_1$ acts by right translation in the second variable, is essentially free and ergodic. Restricting the action to $\Gamma$ or to $G_1$ gives an action of type~I. As explained in Section \ref{sec.cross-section}, it follows that the orbit equivalence relation $\cR$ of $\Gamma \actson G = (G \times G_1)/G_1$ is stably isomorphic with the cross section equivalence relation of $G_1 \actson (G \times G_1)/\Gamma$, which is strongly ergodic and treeable. So also $\cR$ is strongly ergodic and treeable.

When $r=0$, we are considering the lattice $\SL(n,\Z) < \SL(n,\R)$.

3.\ Assume that $\cS$ is a finite set of primes, $\Gamma = \SO^+(n,m,\cO) < G = \SO^+(n,m,\R)$ and $r \geq 2$. The proof is very similar to the proof of 1, but we have to be more careful since the linear algebraic group $\SO(B_{n,m})$ is not simply connected. We consider the semisimple, connected and simply connected linear algebraic group $\cG = \Spin(B_{n,m})$ (see e.g.\ \cite[Section 24i]{Mil17}). When $n+m \neq 4$, the algebraic group $\cG$ is absolutely almost simple. When $n+m = 4$, we have the following isomorphisms for every field $K$ of characteristic zero (see e.g.\ \cite[\S 9, Ex.\ 16]{Bou07} for a very concrete computation):
$$\Spin(B_{4,0},K) \cong \Spin(B_{3,0},K) \times \Spin(B_{3,0},K) \quad\text{and}\quad \Spin(B_{2,2},K) \cong \SL(2,K) \times \SL(2,K) \; .$$
When $-1$ is not a square in $K$, we have $\Spin(B_{3,1},K) \cong \SL(2,K[\sqrt{-1}])$. When $-1$ is a square in $K$, we obviously have
$$\Spin(B_{4,0},K) \cong \Spin(B_{3,1},K) \cong \Spin(B_{2,2},K) \cong \SL(2,K) \times \SL(2,K) \; .$$
So, for all values of $n \geq m \geq 0$ with $n+m \geq 3$, and for all primes $p$, we get that $\cG(\Q_p)$ is noncompact if and only if every almost $\Q_p$-simple factor of $\cG(\Q_p)$ is noncompact if and only if $r_p(n,m) \geq 1$.

Using the standard orthogonal basis for $B_{n,m}$, we view $\cG \subset \GL_{2^{n+m}}$ and define in this way $\Lambda = \cG(\cO)$ as a subgroup of $\cG(\Q)$. We again consider the diagonal embedding
$$\Delta: \Lambda \to S = \cG(\R) \times \prod_{p \in \cS} \cG(\Q_p) \; .$$
Write $\Lambda_1 = \Delta(\Lambda)$. By \cite[Theorem 5.7]{PR94}, $\Lambda_1 < S$ is a lattice. Define
$$\cS_0 = \{p \in \cS \mid \cG(\Q_p) \;\;\text{is noncompact}\;\} \; .$$
By the discussion above, an element $p \in \cS$ belongs to $\cS_0$ if and only if $r_p(n,m) \geq 1$.

It follows from \cite[Theorem 7.12]{PR94} that for every $p \in \cS_0$, the action $\cG(\Q_p) \actson S/\Lambda_1$ is ergodic. In particular, if $\cS_0 \neq \emptyset$, we get that $\Lambda < \cG(\Q)$ is dense. Writing $\cG$ as a product of almost $\Q$-simple components (in the exceptional case where $(n,m) = (4,0)$ or $(2,2)$ where $\cG$ is not $\Q$-simple), it follows from \cite[Th\'{e}or\`{e}me 3.1]{Clo02} that for every $p \in \cS_0$, the action $\cG(\Q_p) \actson S/\Lambda_1$ is strongly ergodic.

Now denote by $\cG_0$ the semisimple, connected linear algebraic group $\SO(B_{n,m})$. We view $\cG_0 \subset \GL_{n+m}$ and define in this way $\cG_0(\cO)$. For every field $K \in \{\Q, \R, \Q_p\}$, we use the natural homomorphism $\pi_K : \cG(K) \to \cG_0(K)$ with $\Ker \pi_K \cong \{\pm 1\}$. Note that $\pi_K(\cG(\cO)) \subset \cG_0(\cO)$. We denote $G_1 = \SO^+(n,m,\R)$, so that $G_1 = \pi_\R(\cG(\R))$. We also denote $G_p = \pi_{\Q_p}(\cG(\Q_p))$. By \cite[Chapter 10, Theorem 3.3]{Cas78}, we get that $G_p$ is an open, finite index subgroup of $\cG_0(\Q_p)$. We still denote by
$$\Delta : \cG_0(\Q) \to L = \cG_0(\R) \times \prod_{p \in \cS} \cG_0(\Q_p)$$
the diagonal embedding. Write $L_1 = G_1 \times \prod_{p \in \cS} G_p$ and $\Gamma_1 = \Delta(\cG_0(\cO)) \cap L_1$. Writing $\pi = \pi_\R \times \prod_{p \in \cS} \pi_{\Q_p}$, we get that $\pi : S \to L$ is a homomorphism that is continuous, open, with finite kernel and with the image $L_1$ being an open, finite index subgroup of $L$. Since $\Lambda_1 < S$ is a lattice, also $\pi(\Lambda_1) < L$ is a lattice. By construction, $\pi(\Lambda_1) \subset \Gamma_1 \subset \Delta(\cG_0(\cO))$. By \cite[Theorem 5.7]{PR94}, also $\Delta(\cG_0(\cO)) < L$ is a lattice. It follows that $\pi(\Lambda_1)$ is a finite index subgroup of $\Delta(\cG_0(\cO))$ and thus also a finite index subgroup of $\Gamma_1$.

For every $p \in \cS_0$, the action $\cG(\Q_p) \actson S/\Lambda_1$ is strongly ergodic. A fortiori, $G_p \actson L_1/\pi(\Lambda_1)$ and $G_p \actson L_1 / \Gamma_1$ are strongly ergodic.

When $n+m = 4$, by definition $r_p(n,m) \in \{0,1\}$. So, if $r_p(n,m) \geq 2$, we have that $\cG(\Q_p)$ is almost $\Q_p$-simple with $\Q_p$-rank at least $2$. In that case, the group $\cG(\Q_p)$ has property~(T) by \cite[Theorem 1.6.1]{BHV08}. The assumption $r \geq 2$ thus implies that either $\cS_0 = \{p\}$ with $G_p$ having property~(T), or $|\cS_0| \geq 2$. In both cases, we can apply Theorem \ref{thm.main-cocycle-superrigid} to the lattice $\Gamma_1 < L_1$. We conclude that the projection of $\Gamma_1$ to $G_1$ is essentially cocycle superrigid with countable targets. But this projection is equal to $\Gamma = \SO^+(n,m,\cO)$.

4.\ In each of the cases with $r=1$ and $G=\SO^+(n,m,\R)$ listed in the proposition, it follows from 3 and using the notation of 3 that we find a prime number $p$ and a lattice $\Gamma_1 < G \times G_p$, with $G_p < \SO(n,m,\Q_p)$ a finite index, open subgroup, such that $G_p \actson (G \times G_p)/\Gamma_1$ is strongly ergodic, such that the projection of $\Gamma_1$ to $G$ equals $\Gamma$ and such that $\SO(n,m,\Q_p)$ is almost $\Q_p$-simple of $\Q_p$-rank $1$. So, $G_p$ is again strongly treeable. As in 2, it follows that $\Gamma \actson G$ is strongly ergodic and has a treeable orbit equivalence relation. In the case $r=0$, we have seen in 3 that $\Gamma$ is a lattice in $G$.

5.\ Assume that $\cS$ is infinite, $\Gamma = \SL(n,\cO) < G = \SL(n,\R)$ or $\Gamma = \SO^+(n,m,\cO) < G = \SO^+(n,m,\R)$ as in the theorem, with $r \geq 2$. Take a finite subset $\cS_1 \subset \cS$ such that the value of $r$ for $\cS_1$ is at least $2$. Define $\Gamma_1$ to be either $\SL(n,\Z[\cS_1^{-1}])$ or $\SO^+(n,m,\Z[\cS_1^{-1}])$. As proven in 1 and 3 above, $\Gamma_1 < G$ is dense and essentially cocycle superrigid with countable targets. By e.g.\ the argument given in \cite[Proof of Theorem 11.2]{Ioa14}, we have that for every $g \in \Gamma$, the group $g \Gamma_1 g^{-1} \cap \Gamma_1$ is dense in $G$. So by Proposition \ref{prop.almost-normal-stability}, we get that $\Gamma < G$ is essentially cocycle superrigid with countable targets.
\end{proof}

Entirely similarly, we get the following examples of essentially cocycle superrigid subgroups of $\SL(n,\R)$ and $\SO^+(n,m,\R)$. We again take $n \geq m \geq 0$ with $n+m \geq 3$. To deal with the exceptional case $n+m=4$, we denote $R(n,m) = m$ when $n+m \neq 4$ and we denote
$$R(4,0) = 0 \;\; , \;\; R(3,1) = R(2,2) = 1 \; .$$
By definition, $R(n,m)$ is the minimal real rank of the almost simple components of $\SO(n,m,\R)$. We also write $I(3) = I(4) = 1$ and $I(n) = \lfloor n/2 \rfloor$ for all $n \geq 5$, so that $I(n+m)$ is the minimal real rank of the almost simple components of $\SO(n,m,\C) \cong \SO(n+m,\C)$.

\begin{proposition}\label{prop.real}
Let $K \subset \R$ be a real algebraic number field having $a \geq 1$ real embeddings and $b \geq 0$ complex embeddings. Denote by $\cO_K$ the ring of integers of $K$. Define the following subgroups.
\begin{enumlist}
\item For $n \geq 2$, consider the subgroup $\Gamma = \SL(n,\cO_K)$ of $G = \SL(n,\R)$. Put $r = (n-1)(a+b-1)$.
\item For $n \geq m \geq 0$ with $n + m \geq 3$, consider the subgroup $\Gamma = \SO^+(n,m,\cO_K)$ of $G = \SO^+(n,m,\R)$. Put $r = (a-1) \, R(n,m) + b \, I(n+m)$.
\end{enumlist}
Then the following holds.
\begin{itemlist}
\item If $r \geq 2$, then $\Gamma < G$ is dense and the translation action $\Gamma \actson G$ is strongly ergodic and essentially cocycle superrigid with countable targets.
\item If $K = \Q(\sqrt{N})$ where $N \geq 2$ is a square-free integer, the subgroups
$$\Gamma = \SL(2,\Z[\sqrt{N}]) < G = \SL(2,\R) \quad\text{and}\quad \Gamma = \SO^+(2,1,\Z[\sqrt{N}]) < G = \SO^+(2,1,\R) \; ,$$
are dense, the translation actions $\Gamma \actson G$ are strongly ergodic and their orbit equivalence relations are treeable. So, these $\Gamma < G$ are not essentially cocycle superrigid.
\item If $r = 0$, then $\Gamma < G$ is a lattice and the translation action $\Gamma \actson G$ is of type~I.
\end{itemlist}
\end{proposition}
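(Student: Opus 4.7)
The plan is to repeat the argument of Proposition~\ref{prop.p-adic} verbatim, replacing the $p$-adic completions by the archimedean ones. Take $\cG = \SL_n$ in case~1 and $\cG = \Spin_{n,m}$ in case~2, viewed as a linear algebraic group over $\Q$. Label the real embeddings of $K$ as $\sigma_1 = \id, \sigma_2, \ldots, \sigma_a$ and the pairs of complex embeddings as $\tau_1, \ldots, \tau_b$, and define the diagonal embedding
$$\Delta : \cG(\cO_K) \to L = \cG(\R) \times H, \quad H = \prod_{i=2}^{a} \cG(\R)_{\sigma_i} \times \prod_{j=1}^{b} \cG(\C)_{\tau_j}.$$
By Borel--Harish-Chandra, $\Gamma_1 := \Delta(\cG(\cO_K))$ is an irreducible lattice in $L$. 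Since $B_{n,m}$ has rational coefficients, every real factor $\cG(\R)_{\sigma_i}$ is isomorphic with $\Spin_{n,m}(\R)$ of real rank $m$, while each complex factor $\cG(\C)_{\tau_j} \cong \Spin(n+m,\C)$ has real rank $\lfloor (n+m)/2 \rfloor$; in case~1 each real factor has rank $n-1$ and each complex factor has real rank $n-1$. Summing over non-identity places yields exactly the integer $r$ from the statement, i.e.\ the real rank of $H$. If $r = 0$, then $H$ is compact, hence $\Gamma$ is a lattice in $G$ and the translation action admits a fundamental domain, so it is of type~I.

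For $r \geq 2$, I check the hypotheses of Theorem~\ref{thm.main-cocycle-superrigid} in exactly the two cases that arose in Proposition~\ref{prop.p-adic}. If some factor of $H$ has real rank $\geq 2$, that factor (hence $H$ itself) has property~(T) by Kazhdan's theorem, and the second alternative of Theorem~\ref{thm.main-cocycle-superrigid} applies. Otherwise all factors have rank $\leq 1$, and $r \geq 2$ forces $H$ to split as $H_1 \times H_2$ with both $H_i$ noncompact semisimple; by Clozel's superstrong approximation~\cite[Th\'eor\`eme 3.1]{Clo02}, each $H_i$ acts with spectral gap on $L/\Gamma_1$, so the first alternative of Theorem~\ref{thm.main-cocycle-superrigid} applies. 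Density of the projection to $G$ follows from irreducibility of $\Gamma_1$ together with noncompactness of $H$, and strong ergodicity of $\Gamma \actson G$ is a consequence of the same spectral gap statement via the cross-section identification explained in Section~\ref{sec.cross-section}. To pass from $\cG = \Spin_{n,m}$ to $\SO^+(n,m)$, one quotients by the central kernel $\{\pm 1\}$ of $\Spin_{n,m} \to \SO_{n,m}$ exactly as in Proposition~\ref{prop.p-adic}, since essential cocycle superrigidity of a dense subgroup is preserved under passage to such quotients.

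For $r = 1$ with $K = \Q(\sqrt{N})$, one has $(a,b) = (2,0)$, so $H$ is itself one copy of $\SL(2,\R)$ in case~1 and one copy of $\SO^+(2,1,\R)$ in case~2, each of real rank one. Consider the essentially free, ergodic action of $\Gamma \times H$ on $G \times H$ by left translation and by right translation in the second variable; since both $\Gamma \actson G \times H$ and $H \actson G \times H$ are of type~I (translation by a closed subgroup), the cross-section discussion of Section~\ref{sec.cross-section} identifies the orbit equivalence relation of $\Gamma \actson G$ (stably) with a cross-section of $H \actson L/\Gamma_1$. The latter is strongly ergodic by Clozel's result and treeable because $\SL(2,\R)$, hence also $\SO^+(2,1,\R) \cong \PSL(2,\R)$, is strongly treeable by \cite[Theorem 6]{CGMTD21}. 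Then essential cocycle superrigidity fails by the treeability obstruction recalled in Section~\ref{sec.treeable}. The main technical point to be careful about is that the integer form $B_{n,m}$ truly keeps its signature $(n,m)$ under every real embedding of $K$, which is immediate from rationality; beyond that the argument is a systematic archimedean replay of Proposition~\ref{prop.p-adic}, and no essentially new ideas are required.
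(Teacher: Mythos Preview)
Your approach is the same as the paper's, and most of it is fine, but there is one genuine slip in your dichotomy for $r \geq 2$. You write that if some factor of $H$ has real rank $\geq 2$, then ``that factor (hence $H$ itself) has property~(T)''. Property~(T) does not pass from a direct factor to a product: if $H$ also contains a noncompact rank-one factor, then $H$ fails property~(T). This actually occurs in case~2 with $m=1$, $n \geq 3$, $a \geq 2$ and $b \geq 1$ (e.g.\ $K$ a quartic field with two real and one complex place), where $H$ contains both a copy of $\Spin(n,1,\R)$ of rank~$1$ and a copy of $\Spin(n+1,\C)$ of rank $\geq 2$. In that situation alternative~2 of Theorem~\ref{thm.main-cocycle-superrigid} is not available.

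The correct dichotomy, exactly as in the proof of Proposition~\ref{prop.p-adic}, is by the \emph{number of noncompact factors} of $H$: if there are at least two, apply alternative~1 (spectral gap for each factor via Clozel); if there is exactly one, then its rank equals $r \geq 2$, so it has property~(T), and since the remaining factors are compact (hence also have~(T)), all of $H$ has property~(T) and alternative~2 applies. With this fix your argument goes through and matches the paper's proof.
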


Note that Proposition \ref{prop.real} does not exhaustively deals with all the cases $r = 1$. We discuss this in Remark \ref{rem.rank-one} below, where we show in particular that the orbit equivalence relation of $\Gamma \actson G$ is \emph{not} treeable in these remaining $r=1$ cases.

\begin{proof}
The argument is entirely similar to the proof of Proposition \ref{prop.p-adic}. We denote by $\al_1,\ldots,\al_a$ the real embeddings of $K$ and denote by $\be_1,\ldots,\be_b$ the complex embeddings of $K$. We take $\al_1$ to be the identity embedding. We again denote by $\cG$ one of the linear algebraic groups $\SL_n$ or $\Spin(B_{n,m})$ and consider the diagonal embedding
$$\Delta : \cG(\cO) \to \cG(\R) \times \prod_{i=2}^a \cG(\R) \times \prod_{j=1}^b \cG(\C) : \Delta(g) = (g,\al_2(g),\ldots,\al_a(g),\be_1(g),\ldots,\be_b(g)) \; .$$
We now make the following observations.
\begin{itemlist}
\item For every $n \geq 2$, the groups $\SL(n,\R)$ and $\SL(n,\C)$ are noncompact and they have property~(T) if $n \geq 3$, see \cite[Theorem 1.4.15]{BHV08}.
\item Let $n \geq m \geq 0$ with $n + m \geq 3$. When $m=0$, the group $\Spin(B_{n,m},\R)$ is compact. When $n=m=2$, we have that $\Spin(B_{2,2},\R) \cong \SL(2,\R) \times \SL(2,\R)$. In all other cases, we have that $\Spin(B_{n,m},\R)$ is noncompact, almost $\R$-simple, with real rank $m$. If this real rank is at least $2$, then $\Spin(B_{n,m},\R)$ has property~(T) by \cite[Theorem 1.6.1]{BHV08}.
\item Let $n \geq m \geq 0$ with $n + m \geq 3$. Then, $\Spin(B_{n,m},\C) \cong \Spin(B_{n+m,0},\C)$ is noncompact. If $n+m \neq 4$, the group $\Spin(B_{n,m},\C)$ is almost $\C$-simple and has real rank $\lfloor (n+m)/2 \rfloor$. If this real rank is at least $2$, then $\Spin(B_{n,m},\R)$ has property~(T) by \cite[Theorem 1.6.1]{BHV08}. Finally, if $n+m = 4$, we have $\Spin(B_{n,m},\C) \cong \SL(2,\C) \times \SL(2,\C)$.
\end{itemlist}
So when $r \geq 2$, we can repeat the argument of Proposition \ref{prop.p-adic} and get that $\Gamma < G$ is dense and essentially cocycle superrigid with countable targets.

Also in the case where $K = \Q(\sqrt{N})$ for some square-free integer and $\cG$ equals $\SL_2$ or $\Spin(B_{2,1})$, the argument is the same, since \cite[Theorem 6]{CGMTD21} says that the (locally isomorphic) groups $\SL(2,\R)$ and $\SO^+(2,1,\R)$ are strongly treeable.
\end{proof}

\begin{remark}\label{rem.rank-one}
In Proposition \ref{prop.real}, we do not exhaust all cases with $r=1$. In point~1, there remains the case where $K$ is a complex cubic field ($a=1$, $b=1$) and $n=2$. In point~2, there are several cases left:
\begin{enumlist}
\item $\SO(3,\cO_K) < \SO(3,\R)$ where $K$ has precisely one complex embedding.
\item $\SO^+(2,1,\cO_K) < \SO^+(2,1,\R)$ where $K$ is a complex cubic field.
\item $\SO^+(n,1,\cO_K) < \SO^+(n,1,\R)$ where $K = \Q(\sqrt{N})$ with $N \geq 2$ a square-free integer and $n \geq 3$.
\end{enumlist}
In all these cases, the argument in the proof of Proposition \ref{prop.p-adic} shows that $\Gamma < G$ is dense and that $\Gamma \actson G$ is strongly ergodic with the orbit equivalence relation $\cR$ being stably isomorphic to the cross section equivalence relation of an essentially free, strongly ergodic, pmp action of one of the following Lie groups $G_1$: $\SL(2,\C)$, $\SO(3,\C)$, $\SO^+(n,1,\R)$ with $n \geq 3$. All these Lie groups have real rank one. By \cite[Corollary 11]{PV13}, these groups $G_1$ have vanishing first $L^2$-Betti number. By \cite[Theorem A]{KPV13}, the same holds for the restriction of $\cR$ to any finite measure subset of $G$. This means that $\cR$ is not treeable in these cases. We do not expect that these pmp actions of $G_1$ satisfy a cocycle superrigidity theorem for arbitrary countable target groups (cf.\ \cite{BFS10}, where such a superrigidity theorem is proven under an extra integrability assumption on the cocycle).
\end{remark}

We are now ready to prove Theorem \ref{thm.main-hyperbolic-plane}. We view $\PSL(2,\R)$ as the group of orientation preserving isometries of the hyperbolic plane in its upper half plane model $\bH^2 = \{z \in \C \mid \Im z > 0\}$. For each $n \geq 2$, we also view $\SO^+(n,1,\R)$ as the group of orientation preserving isometries of hyperbolic $n$-space $\bH^n$~: using the notation \eqref{eq.bilinear}, we write
$$\bH^n = \{x \in \R^{n+1} \mid B_{n,1}(x,x) = -1 \;\;\text{and}\;\; x_{n+1} > 0 \}$$
so that $\SO^+(n,1,\R)$ precisely consists of the elements $g \in \SO(n,1,\R)$ that globally preserve $\bH^n$.

For every countable subring $\cO \subset \R$, the action of $\PSL(2,\cO)$ on $\bH^2$ and the action of $\SO^+(n,1,\cO)$ on $\bH^n$ is essentially free and preserves the natural infinite measure on $\bH^n$.

\begin{theorem}\label{thm.list-Wstar-superrigid}
Let $\cS$ be a finite set of prime numbers and denote $\cO_\cS = \Z[\cS^{-1}]$. Let $\Q \subset K \subset \R$ be a real algebraic number field and denote by $\cO_K$ the ring of algebraic integers.

The natural isometric actions of all the following groups on hyperbolic space are essentially free, strongly ergodic, infinite measure preserving, OE-superrigid and W$^*$-superrigid in the sense of Definition \ref{def.superrigid}.
\begin{enumlist}
\item $\PSL(2,\cO_\cS)$ and $\SO^+(2,1,\cO_\cS)$ if $|\cS| \geq 2$,
\item $\PSL(2,\cO_K)$, $\SO^+(2,1,\cO_K)$ and $\SO^+(3,1,\cO_K)$ if $[K:\Q] \geq 4$ or if $K$ is a totally real cubic field,
\item $\SO^+(3,1,\cO_\cS)$ if $|\cS| \geq 2$,
\item $\SO^+(n,1,\cO_K)$ if $n \geq 2$ and $K$ is totally real with $[K:\Q] \geq 3$.
\end{enumlist}
\end{theorem}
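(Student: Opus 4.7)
The plan is, in each of the four families, to verify the hypotheses of Corollary~\ref{cor.OE-superrigid-homogeneous} for the isometric action $\Gamma \actson \bH^n = G/P$, where $P$ is the maximal compact stabilizer of a point ($\PSO(2) \subset \PSL(2,\R)$ or $\SO(n) \subset \SO^+(n,1,\R)$), to deduce OE-superrigidity, and then to upgrade to W$^*$-superrigidity by combining this with a unique Cartan subalgebra theorem for the associated type~II$_1$ factor.

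The first step is to establish density of $\Gamma$ in $G$, strong ergodicity of the translation action $\Gamma \actson G$, and essential cocycle superrigidity with countable targets. This follows from Proposition~\ref{prop.p-adic} in cases~(1) and~(3) and Proposition~\ref{prop.real} in cases~(2) and~(4), by checking that the rank-like integer $r$ is at least~$2$: $r = |\cS| \geq 2$ in case~(1); $r = a+b-1 \geq 2$ in case~(2), since $a \geq 1$ together with either $[K:\Q] = a+2b \geq 4$ or $K$ totally real cubic ($a=3$, $b=0$) forces $a+b \geq 3$; $r = |\cS| \geq 2$ in case~(3), where the assumption $p \not\equiv 1 \, (\text{mod } 4)$ forces $W_p(2) = 0$ so that each prime in $\cS$ contributes $m + W_p(n-m) = 1$; and $r = a-1 \geq 2$ in case~(4), since $K$ is totally real with $a = [K:\Q] \geq 3$. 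In the $\PSL$ cases one first applies the propositions to $\SL(2,\cO) < \SL(2,\R)$ and then descends through the center $\{\pm I\}$.

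Next I would verify the remaining hypotheses of Corollary~\ref{cor.OE-superrigid-homogeneous}. The ambient groups $G = \PSL(2,\R)$ and $G = \SO^+(n,1,\R)$ have trivial center, so the condition $\cZ(G) \subset \Gamma$ is automatic. Essential freeness of $\Gamma \actson \bH^n$ follows because point stabilizers in $G$ are compact, hence intersect the countable group $\Gamma$ in finite subgroups, and the fixed-point set of any nontrivial isometry of $\bH^n$ is a lower-dimensional totally geodesic submanifold. Strong ergodicity and the infinite invariant measure pass from $\Gamma \actson G$ to $\Gamma \actson G/P$ by restricting to right-$P$-invariant $L^2$-vectors and by integrating Haar measure along $P$-cosets. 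Finally, the preimage $\pi^{-1}(P)$ in the universal cover is connected in each case: for $\PSL(2,\R)$ the preimage of $\PSO(2)$ in $\widetilde{\PSL(2,\R)}$ is $\R$, and for $\SO^+(n,1,\R)$ the preimage of $\SO(n)$ in $\Spin^+(n,1)$ is the connected group $\Spin(n)$. Corollary~\ref{cor.OE-superrigid-homogeneous} therefore yields full OE-superrigidity in the sense of Definition~\ref{def.superrigid}.

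The main obstacle is upgrading OE-superrigidity to W$^*$-superrigidity, which requires showing that the type~II$_\infty$ factor $L^\infty(\bH^n) \rtimes \Gamma$, equivalently the type~II$_1$ von Neumann algebra of any cross section equivalence relation, admits a unique Cartan subalgebra up to unitary conjugacy. In each of the four families $\Gamma$ is an irreducible lattice in a product $G \times H$ of two noncompact semisimple groups; by Section~\ref{sec.cross-section}, the cross section equivalence relation of $\Gamma \actson G$ is stably isomorphic to that of a strongly ergodic pmp action of the second factor on the finite-volume space $(G \times H)/\Gamma$. To this setting one applies the uniqueness of Cartan subalgebra theorems for crossed products of such lattice actions, as developed in \cite{BDV17} building on \cite{PV11,PV12,HV12,Ioa12}. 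Combined with the OE-superrigidity established above, this forces any essentially free ergodic action whose crossed product is stably isomorphic to $L^\infty(\bH^n) \rtimes \Gamma$ to be stably orbit equivalent to $\Gamma \actson \bH^n$, and hence to be an induction of $\Gamma \actson \bH^n$, which is exactly W$^*$-superrigidity.
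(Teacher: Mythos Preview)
Your proposal is correct and follows essentially the same route as the paper: verify $r\geq 2$ via Propositions~\ref{prop.p-adic}/\ref{prop.real}, check that $\pi^{-1}(P)$ is connected, apply Corollary~\ref{cor.OE-superrigid-homogeneous} (with trivial center) for OE-superrigidity, and then obtain unique Cartan via a measure-equivalence passage to an action of the complementary factors and \cite{BDV17}. Two small points to tighten: first, in case~(3) the hypothesis $p\not\equiv 1\pmod 4$ is not merely a convenience for computing $r$ (indeed $r\geq 2$ would hold without it) but is exactly what forces each $\SO(3,1,\Q_p)$ to have $\Q_p$-rank one, which is the hypothesis needed for \cite[Theorem~A]{BDV17}; second, your cross-section argument should be run with $G/K\times H$ rather than $G\times H$, so that the stable isomorphism lands on $L^\infty(\bH^n)\rtimes\Gamma$ and not $L^\infty(G)\rtimes\Gamma$, exactly as in the paper's proof.
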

\begin{proof}
Denote these groups as $\Gamma$ sitting inside the Lie group $G$, which is either $\PSL(2,\R)$ or $\SO^+(n,1,\R)$. Defining $K = \PSO(2,\R)$ or $K = \SO(n,\R)$, respectively, we have $\bH^n = G/K$. As mentioned before the theorem, all these group actions $\Gamma \actson G/K$ are essentially free and preserve the natural infinite measure on $G/K$. Also note that the groups $K$ are connected and that the natural homomorphism from the fundamental group of $K$ to the fundamental group of $G$ is surjective. So, the inverse image of $K$ in the universal cover of $G$ is connected.

By Propositions \ref{prop.p-adic} and \ref{prop.real}, the subgroups $\Gamma < G$ are dense and essentially cocycle superrigid with countable targets, and the translation action $\Gamma \actson G$ is strongly ergodic. By Proposition \ref{prop.cocycle-superrigid-homogeneous-spaces}, the action $\Gamma \actson G/K$ is cocycle superrigid with countable targets. By Corollary \ref{cor.OE-superrigid-homogeneous}, the action $\Gamma \actson G/K$ is OE-superrigid.

To conclude that $\Gamma \actson G/K$ is W$^*$-superrigid, it then suffices to prove that the II$_\infty$ factor $L^\infty(G/K) \rtimes \Gamma$ has a unique Cartan subalgebra up to unitary conjugacy. From the proofs of Proposition \ref{prop.p-adic} and \ref{prop.real}, we know that $\Gamma$ can be viewed as a lattice in $G \times G_1 \times \cdots \times G_k$ where for every $i \in \{1,\ldots,k\}$, the group $G_i$ is an almost simple real or $p$-adic Lie group of rank $1$, or has a center $Z_i \cong \{\pm 1\}$ such that $G_i / Z_i$ is a direct product of two such groups. For coherence of notations, we thus find finite central subgroups $Z_i < G_i$, which may be trivial, such that $\cG = G_1/Z_1 \times \cdots \times G_k/Z_k$ is a product of almost simple real or $p$-adic Lie groups of rank $1$.

Consider the action $\Gamma \times \cG \actson^\al G/K \times \cG$, where $\Gamma$ acts by left translation and $\cG$ acts by right translation in the second variable. Since the action $\Gamma \actson G/K$ is essentially free and ergodic, also the action $\al$ is essentially free and ergodic. The restriction of the action $\al$ to $\Gamma$ or to $\cG$ is of type~I. As explained in Section \ref{sec.cross-section}, identifying $G/K = (G/K \times \cG)/\cG$ and writing $Z = \Gamma \backslash (G/K \times \cG)$, it follows that the von Neumann algebras $L^\infty(G/K) \rtimes \Gamma$ and $L^\infty(Z) \rtimes \cG$ are stably isomorphic. By \cite[Theorem A]{BDV17}, the von Neumann algebra $L^\infty(Z) \rtimes \cG$ has a unique Cartan subalgebra up to unitary conjugacy. So, $L^\infty(G/K)$ is, up to unitary conjugacy, the unique Cartan subalgebra of $L^\infty(G/K) \rtimes \Gamma$.
\end{proof}

Theorem \ref{thm.main-hyperbolic-plane} and Corollary \ref{cor.OE-superrigid-homogeneous-lattice} are now an immediate consequence of results proven so far.

\begin{proof}[{Proof of Theorem \ref{thm.main-hyperbolic-plane}}]
1.\ This is well known, with $\{z \in \C \mid \Im z > 0, |z| > 1 , |\Re z| < 1/2\}$ being a fundamental domain for the action $\PSL(2,\Z) \actson \bH^2$.

2.\ Write $G_1 = \PSL(2,\Q_p)$. As in the proof of Proposition \ref{prop.p-adic}, the lcsc group $G_1$ is strongly treeable and the orbit equivalence relation $\cR$ of the action $\Gamma \actson \bH^2$ is stably isomorphic with the cross section equivalence relation of the pmp action $G_1 \actson \Gamma \backslash (\bH^2 \times G_1)$ by right translation in the second variable. Thus, $\cR$ is treeable.

3.\ This is point~1 of Theorem \ref{thm.list-Wstar-superrigid}.
\end{proof}

\begin{proof}[Proof of Corollary \ref{cor.OE-superrigid-homogeneous-lattice}]
Define $\Gamma < G$ as in the formulation of the corollary. By Proposition \ref{prop.p-adic}, $\Gamma < G$ is dense and essentially cocycle superrigid with countable targets. The equivalence of 1 and 2 then follows from Proposition \ref{prop.OE-superrigid-homogeneous}. If $\cS$ is a finite set, we have seen in the proof of Proposition \ref{prop.p-adic} that $\Gamma$ is a lattice in
$$\PSL(2,\R) \times \prod_{p \in \cS} \PSL(2,\Q_p) \; .$$
It thus follows from \cite[Theorem 1.3]{PV12} that the crossed product II$_1$ factor $L^\infty(G/\Sigma) \rtimes \Gamma$ has a unique Cartan subalgebra up to unitary conjugacy. The equivalence of points 1 and 3 is then also proven.
\end{proof}

For completeness, we also record the following result. The proof is identical to the proof of Theorem \ref{thm.list-Wstar-superrigid}. The only reason why we cannot prove W$^*$-superrigidity is because we do not know if uniqueness of Cartan holds in this setting.

\begin{proposition}\label{prop.S-infinite}
Let $\cS$ be any set of prime numbers with $|\cS| \geq 2$. The natural isometric actions of $\PSL(2,\Z[\cS^{-1}])$ on $\bH^2$ and of $\SO^+(n,1,\Z[\cS^{-1}])$ on $\bH^n$ are essentially free, strongly ergodic, infinite measure preserving and OE-superrigid in the sense of Definition \ref{def.superrigid}.
\end{proposition}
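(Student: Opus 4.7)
The plan is to follow the proof of Theorem~\ref{thm.list-Wstar-superrigid} verbatim, only dropping the Cartan uniqueness step. Denote by $\Gamma$ either $\PSL(2,\Z[\cS^{-1}])$ sitting in $G = \PSL(2,\R)$, or $\SO^+(n,1,\Z[\cS^{-1}])$ sitting in $G = \SO^+(n,1,\R)$, and set $K = \PSO(2,\R)$ or $K = \SO(n,\R)$ respectively, so that $\bH^n = G/K$ with its natural $\Gamma$-invariant infinite Radon measure. The actions $\Gamma \actson G/K$ are essentially free because the stabilizer of a point in $G/K$ is a conjugate of the compact group $K$, which intersects the countable dense subgroup $\Gamma$ trivially.

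First I would establish that $\Gamma < G$ is dense and essentially cocycle superrigid with countable targets, and that $\Gamma \actson G$ is strongly ergodic. For finite subsets $\cS_1 \subset \cS$ with $|\cS_1| \geq 2$, this is given by Proposition~\ref{prop.p-adic}. The extension to arbitrary (possibly infinite) $\cS$ with $|\cS| \geq 2$ uses Proposition~\ref{prop.almost-normal-stability} exactly as in the last paragraph of the proof of Proposition~\ref{prop.p-adic}: picking a finite subset $\cS_1 \subset \cS$ with $|\cS_1| \geq 2$ and setting $\Gamma_1 = \PSL(2,\Z[\cS_1^{-1}])$ (resp.\ $\SO^+(n,1,\Z[\cS_1^{-1}])$), one checks that for every $g \in \Gamma$ the intersection $g\Gamma_1 g^{-1} \cap \Gamma_1$ remains dense in $G$, and Proposition~\ref{prop.almost-normal-stability} transfers essential cocycle superrigidity from $\Gamma_1$ to $\Gamma$. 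Strong ergodicity of $\Gamma \actson G$ passes from $\Gamma_1$ to the larger group $\Gamma$ immediately.

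Next I would pass from $\Gamma \actson G$ to $\Gamma \actson G/K$. The key observation is that $K$ is connected and the natural homomorphism $\pi_1(K) \to \pi_1(G)$ is surjective (concretely, $\PSO(2,\R) \simeq \T$ carries the maximal circle in $\PSL(2,\R)$, and similarly for $\SO(n,\R) \subset \SO^+(n,1,\R)$). Therefore the preimage of $K$ in the universal cover $\Gtil \to G$ is connected. Proposition~\ref{prop.cocycle-superrigid-homogeneous-spaces} then upgrades essential cocycle superrigidity of $\Gamma < G$ to plain cocycle superrigidity with countable targets for $\Gamma \actson G/K$. Strong ergodicity of $\Gamma \actson G/K$ is immediate from strong ergodicity of $\Gamma \actson G$ because $L^\infty(G/K) \subset L^\infty(G)$ is a $\Gamma$-equivariant inclusion of von Neumann algebras preserving the respective Radon measures. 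Finally, since the actions $\Gamma \actson G/K$ are ergodic and essentially free with $\cZ(G) = \{e\}$ in both cases (so that the hypotheses of Corollary~\ref{cor.OE-superrigid-homogeneous} are satisfied with $P = K$ and with $\pi^{-1}(K)$ connected), Corollary~\ref{cor.OE-superrigid-homogeneous} gives OE-superrigidity in the sense of Definition~\ref{def.superrigid}.

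The only step where the proof of Theorem~\ref{thm.list-Wstar-superrigid} genuinely uses finiteness of $\cS$ is the deduction of uniqueness of the Cartan subalgebra of $L^\infty(G/K) \rtimes \Gamma$, via \cite[Theorem A]{BDV17} applied to a lattice embedding of $\Gamma$ in a finite product of rank-one Lie and $p$-adic groups. When $\cS$ is infinite, no such finite product embedding is available, so this step cannot be carried out and we do not claim W$^*$-superrigidity. Everything else in the proof of Theorem~\ref{thm.list-Wstar-superrigid} goes through unchanged, giving the stated OE-superrigidity.
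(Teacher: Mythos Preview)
Your proposal is correct and follows exactly the approach the paper intends: the paper explicitly states that the proof of Proposition~\ref{prop.S-infinite} is identical to that of Theorem~\ref{thm.list-Wstar-superrigid}, omitting only the Cartan uniqueness step, and this is precisely what you do.

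One small imprecision: your justification of essential freeness (``the stabilizer $\ldots$ is a conjugate of the compact group $K$, which intersects the countable dense subgroup $\Gamma$ trivially'') is not correct as stated, since for specific points the intersection $\Gamma \cap gKg^{-1}$ can be nontrivial (e.g.\ torsion elements of $\PSL(2,\Z)$ lie in some conjugate of $\PSO(2,\R)$). The right argument is that for each fixed $\gamma \in \Gamma \setminus \{e\}$ the fixed-point set in $\bH^n$ is a proper totally geodesic submanifold and hence null, and $\Gamma$ is countable. The paper does not spell this out either, so this does not affect the overall correctness of your write-up.
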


Also the following result is an immediate consequence of what we have proven so far.

\begin{corollary}\label{cor.superrigid-Rn}
Let $\cO \subset \R$ be either the ring $\Z[\cS^{-1}]$ where $\cS$ is any nonempty set of prime numbers or the ring of algebraic integers of a real algebraic number field $\Q \subset K \subset \R$ with $2 \leq [K:\Q] < \infty$. Let $n \geq 2$ and put $\Gamma = \SL(n,\cO)$. Consider the linear action $\Gamma \actson \R^n$, which is essentially free, strongly ergodic and of type II$_\infty$. Let $\Lambda \actson (Y,\eta)$ be an arbitrary essentially free, ergodic, nonsingular action that is stably orbit equivalent with $\Gamma \actson \R^n$.
\begin{enumlist}
\item If $n \geq 3$, the action $\Gamma \actson \R^n$ is cocycle superrigid with arbitrary countable target groups.
\item If $n \geq 3$ and $n$ is odd, the action $\Gamma \actson \R^n$ is OE-superrigid in the sense of Definition \ref{def.superrigid}: $\Lambda \actson Y$ is conjugate to an induction of $\Gamma \actson \R^n$.
\item If $n \geq 3$ and $n$ is even, then $\Lambda \actson Y$ is conjugate to an induction of either $\Gamma \actson \R^n$ or $\Gamma/\{\pm 1\} \actson \R^n / \{\pm 1\}$.
\item If $n = 2$ and either $|\cS|\geq 2$ or $[K:\Q] \geq 4$ or $K$ is a totally real cubic field, then $\Lambda \actson Y$ is conjugate to an induction of one of the actions $\Gamma_n \actson^{\al_n} X_n$ defined in \eqref{eq.actions-al-n} with $n \in \{0\} \cup \N$.
\end{enumlist}
\end{corollary}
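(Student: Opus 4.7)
The plan is to realize $\R^n \setminus \{0\} = G/P$ with $G = \SL(n,\R)$ and $P = \Stab(e_1) = \R^{n-1} \rtimes \SL(n-1,\R)$ (so $P \cong \R$ for $n=2$); the group $P$ is connected for every $n \geq 2$. Essential freeness of $\Gamma\actson\R^n$ is clear because the fixed set of each non-identity $\gamma \in \Gamma$ is a proper linear subspace, hence Lebesgue-null, and $\Gamma$ is countable. Lebesgue measure is $\Gamma$-invariant and infinite, giving the type II$_\infty$ property; ergodicity holds by density of $\Gamma < G$ and transitivity of $G$ on $\R^n \setminus \{0\}$. Strong ergodicity of $\Gamma \actson \R^n$ is inherited from strong ergodicity of $\Gamma \actson G$ (Propositions \ref{prop.p-adic} and \ref{prop.real}) via the cross section formalism of Section \ref{sec.cross-section}, exactly as in the proof of Theorem \ref{thm.list-Wstar-superrigid}. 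Those same two propositions give, under each hypothesis on $\cO$ and $n$, that $\Gamma < G$ is dense and essentially cocycle superrigid with countable targets. Note also that $\cZ(G) \subset \Gamma$ in all cases, since $-I \in \SL(n,\cO)$.

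For $n \geq 3$ the universal cover $\pi : \Gtil \to G$ is a double cover (as $\pi_1(\SL(n,\R)) = \Z/2\Z$). Since $\R^n \setminus \{0\}$ is simply connected for $n \geq 3$, the long exact sequence
$$\pi_1(P) \recht \pi_1(G) \recht \pi_1(G/P) = 1$$
forces $\pi_1(P) \to \pi_1(G)$ to be surjective, equivalently $\pi^{-1}(P)$ is connected. Point 1 is then immediate from Proposition \ref{prop.cocycle-superrigid-homogeneous-spaces}. The center $\cZ(G) = \{\lambda I : \lambda^n = 1,\, \lambda \in \R\}$ is trivial for odd $n$ and equals $\{\pm I\}$ for even $n$; Corollary \ref{cor.OE-superrigid-homogeneous} then yields points 2 and 3, as the only admissible $\Sigma \subset \cZ(G)$ is trivial when $n$ is odd, while for even $n$ both $\Sigma = \{I\}$ and $\Sigma = \{\pm I\}$ occur, producing the two actions $\Gamma \actson \R^n$ and $\Gamma/\{\pm I\} \actson \R^n / \{\pm I\}$.

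For $n = 2$ we have $\pi_1(G) = \Z$, and $\cZ(\Gtil)$ is a central copy of $\Z$ containing $\Ker \pi$ as a subgroup of index two. Since $P \cong \R$ is simply connected it lifts to a connected component $P_0 \subset \pi^{-1}(P)$ with $P_0 \cap \Ker \pi = \{e\}$, so $\pi^{-1}(P) = P_0 \times \Ker \pi$. Any generator $z$ of $\cZ(\Gtil)$ projects to $-I \notin P$, so $P_0 \cap \langle z \rangle = \{e\}$ and
$$\Ptil := \pi^{-1}(P) \cdot \cZ(\Gtil) \cong \R \times \Z$$
as topological groups, with identity component $P_0$. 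In particular $\Ptil$ is abelian, so every open subgroup is automatically normal and must contain $P_0$; the open normal subgroups of $\Ptil$ are therefore exactly $L_n = P_0 \times n\Z$ for $n \in \{0\} \cup \N$ (with the convention $0\cdot\Z = \{0\}$). Feeding each $L_n$ into Proposition \ref{prop.OE-superrigid-homogeneous} produces the countable family
$$\Gamma_n := \frac{\Gammatil \times \Ptil}{(\{e\} \times L_n)\,N} \actson X_n := \Gtil / L_n$$
by left-right translation, which is point 4, and Proposition \ref{prop.OE-superrigid-homogeneous} asserts that every essentially free ergodic nonsingular action $\Lambda \actson Y$ stably orbit equivalent to $\Gamma \actson \R^2$ is conjugate to an induction of one of these.

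The essential cocycle superrigidity input from Propositions \ref{prop.p-adic}/\ref{prop.real} and the machinery of Section 3 do the heavy lifting; what is left is elementary homotopy and an enumeration of open subgroups of $\Ptil \cong \R \times \Z$. The main subtlety I expect is the $n = 2$ case, where one must match the family $\Gamma_n \actson^{\al_n} X_n$ declared in \eqref{eq.actions-al-n} with the inductions produced by Proposition \ref{prop.OE-superrigid-homogeneous} applied to $L = L_n$, in particular interpreting the case $n = 0$ (where $L_0 = P_0$ has infinite index in $\Ptil$) correctly.
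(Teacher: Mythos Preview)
Your proof is correct and follows the same route as the paper: reduce to $G/P_0$ with $P_0 = \Stab(e_1)$, invoke Propositions \ref{prop.p-adic}/\ref{prop.real} for essential cocycle superrigidity, then apply Proposition \ref{prop.cocycle-superrigid-homogeneous-spaces} and Corollary \ref{cor.OE-superrigid-homogeneous} for $n\geq 3$ and Proposition \ref{prop.OE-superrigid-homogeneous} for $n=2$. Two small remarks. First, your argument for the connectedness of $\pi^{-1}(P_0)$ when $n\geq 3$ via the simple connectivity of $\R^n\setminus\{0\}$ is slightly cleaner than the paper's, which instead notes that $\SO(n-1)\hookrightarrow\SO(n)$ is $\pi_1$-surjective; both are equivalent. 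Second, the subtlety you flag at the end is genuine but easily dispatched: with $L_n = P_0\cdot g_0^{n\Z}$ and $N=\{(z,z):z\in\cZ(\Gtil)\}$, the map $(\gamma,p\,g_0^{j})\mapsto \gamma g_0^{-j}\bmod g_0^{n\Z}$ is a group isomorphism $(\Gammatil\times\Ptil)/((\{e\}\times L_n)N)\to\Gammatil/g_0^{n\Z}=\Gamma_n$ (using that $g_0$ is central and $P_0\cap g_0^{\Z}=\{e\}$), and under it the left-right action on $\Gtil/L_n$ becomes the left action of $\Gamma_n$ on $X_n$ in \eqref{eq.actions-al-n}. The paper leaves this identification implicit as well.
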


Let $\pi : \Gtil \to \SL(2,\R)$ be the universal cover. Let $P < \Gtil$ be the unique closed connected subgroup with $\pi(P) = \bigl(\begin{smallmatrix} 1 & * \\ 0 & 1 \end{smallmatrix}\bigr)$. Take $g_0 \in \Gtil$ such that $\cZ(\Gtil) = g_0^\Z$. For every $n \in \{0\} \cup \N$, define
\begin{equation}\label{eq.actions-al-n}
\Gamma_n \actson^{\al_n} X_n \quad\text{where}\quad \Gamma_n = \pi^{-1}(\SL(2,\cO))/g_0^{n \Z} \quad\text{and}\quad X_n = \Gtil/(P \cdot g_0^{n\Z}) \; .
\end{equation}
Note that $\Ker \pi = g_0^{2\Z}$ and that $\pi(g_0) = -1$. Therefore, the action $\Gamma \actson \R^2$ corresponds to $\al_2$, while the action $\al_1$ is isomorphic with $\Gamma/\{\pm 1\} \actson \R^2/\{\pm 1\}$.

Also note that in \cite[Theorems 1.3 and 1.5]{PV08}, a similar superrigidity result as Corollary \ref{cor.superrigid-Rn} is proven for the linear action of $\Gamma = \SL(n,\Z)$ on $\R^n$ if $n \geq 5$. For $n=2$, the action $\SL(2,\Z) \actson \R^2$ is amenable. It remains a challenging open problem to understand $\SL(n,\Z) \actson \R^n$ when $n=3,4$.

\begin{proof}[Proof of Corollary \ref{cor.superrigid-Rn}]
Write $G = \SL(n,\R)$. In all cases, it follows from Propositions \ref{prop.p-adic} and \ref{prop.real} that $\Gamma < G$ is essentially cocycle superrigid with countable targets.

Let $\pi : \Gtil \to G$ be the universal cover. The linear action of $G$ on $\R^n$ is essentially transitive and, up to measure zero, we identify $\R^n$ with $G / P_0$ where
$$P_0 = \begin{pmatrix} 1 & \R^{n-1} \\ 0 & \SL(n-1,\R) \end{pmatrix} \; .$$
When $n \geq 3$, the embedding of $\SO(n-1)$ into $\SO(n)$ induces a surjective homomorphism from the fundamental group of $\SO(n-1)$ to the fundamental group of $\SO(n)$. So, for $n \geq 3$, the closed subgroup $\pi^{-1}(P_0) < \Gtil$ is connected. Thus, point~1 follows from Proposition \ref{prop.cocycle-superrigid-homogeneous-spaces}, while points 2 and 3 follow from Corollary \ref{cor.OE-superrigid-homogeneous}.

Finally consider the case $n=2$. Using the notation introduced before \eqref{eq.actions-al-n}, we get that $\pi^{-1}(P_0) \cdot \cZ(\Gtil) = P \cdot \cZ(\Gtil)$. Since $P$ is connected, the open subgroups of $\pi^{-1}(P_0) \cdot \cZ(\Gtil)$ are precisely $P \cdot g_0^{n\Z}$ with $n \in \{0\} \cup \N$. So point~4 follows from Proposition \ref{prop.OE-superrigid-homogeneous}.
\end{proof}

\section{Superrigidity for wreath product equivalence relations}

We prove in this section an OE-superrigidity theorem for left-right wreath product equivalence relations, see Theorem \ref{thm.main-wreath}. Note that Theorem \ref{thm.stable-non-implement} stated in the introduction is an immediate corollary of Theorem \ref{thm.main-wreath} below.

Rather then considering left-right wreath products associated with the action $\Gamma \times \Gamma \actson \Gamma$ by left-right translation, it is more natural to prove this result for the appropriate class of generalized wreath products, associated with an action $\Gamma \actson I$.

So, we change notations and let $\Gamma$ be a countable group and $\Gamma \actson I$ an action on a countable set. Whenever $\Lambda$ is a countable group, the \emph{wreath product} $\Lambda \wr_I \Gamma$ is defined as the semidirect product $\Lambda^{(I)} \rtimes \Gamma$, where $\Gamma$ acts on $\Lambda^{(I)}$ by the automorphisms $(g^{-1} \cdot \lambda)_i = \lambda_{g \cdot i}$.

Similarly, when $\cR_0$ is a countable pmp equivalence relation on the standard probability space $(X_0,\mu_0)$, the wreath product $\cR_0 \wr_I \Gamma$ is defined as the countable pmp equivalence relation $\cR$ on $(X,\mu) = (X_0,\mu_0)^I$ given by $(x,y) \in \cR$ if and only if there exists a $g \in \Gamma$ and a finite subset $J \subset I$ such that $x_{g\cdot i} = y_i$ for all $i \in I \setminus J$ and $(x_{g\cdot i},y_i) \in \cR_0$ for all $i \in J$.

If $\Lambda \actson (X_0,\mu_0)$ is a pmp action, there is a natural action of $\Lambda \wr_I \Gamma$ on $(X_0,\mu_0)^I$ and its orbit equivalence relation is given by $\cR_0 \wr_I \Gamma$, where $\cR_0$ denotes the orbit equivalence relation of $\Lambda \actson X_0$.

We fix throughout a transitive action $\Gamma \actson I$ and make the following assumptions.
\begin{enumlist}
\item A rigidity assumption: the action should satisfy one of the following properties.
\begin{itemlist}
\item (Spectral gap rigidity) The group $\Gamma$ is generated by commuting subgroups $\Gamma_1$ and $\Gamma_2$ such that $I$ does not admit a $\Gamma_1$-invariant mean and $\Gamma_2 \actson I$ has infinite orbits.
\item ($w$-rigidity) The group $\Gamma$ admits a normal subgroup $\Gamma_1$ with the relative property (T) and with $\Gamma_1 \actson I$ having infinite orbits.
\end{itemlist}
\item For every $i \in I$, the stabilizer $\Stab i$ acts with infinite orbits on $I \setminus \{i\}$. For every $g \in \Gamma \setminus \{e\}$, there are infinitely many $i \in I$ with $g \cdot i \neq i$.
\item The group $\Gamma$ has infinite conjugacy classes.
\end{enumlist}
Note that the rigidity assumptions in point~1 were introduced in \cite{Pop05b,Pop06a}, where it was proven that under these hypotheses, the generalized Bernoulli action $\Gamma \actson X_0^I$ is cocycle superrigid with countable targets, which is the starting point of our proof of Theorem \ref{thm.main-wreath}. Assumption~2 above goes back to \cite{PV06} and makes this generalized Bernoulli action even more rigid since it allows in particular to completely describe all self orbit equivalences of $\Gamma \actson X_0^I$.

A wide and interesting class of examples satisfying all the hypotheses is given by the action $\Gamma_1 \times \Gamma_1 \actson \Gamma_1$ by left-right translation whenever $\Gamma_1$ is a nonamenable icc group. In that sense, left-right wreath products are covered by our results.

We refer to Section \ref{sec.prelim} for basic terminology and results on stable orbit equivalence.

\begin{theorem}\label{thm.main-wreath}
Let $\Gamma \actson I$ satisfy the assumptions above. Let $\cR_0$ be any countable nontrivial pmp equivalence relation on a standard probability space $(X_0,\mu_0)$.

Let $G \actson (Y,\eta)$ be an essentially free action of a countable group $G$ on a standard probability space $(Y,\eta)$. Let $t > 0$.

Then the following two statements are equivalent.

\begin{itemlist}
\item $(\cR_0 \wr_I \Gamma)^t$ is isomorphic with the orbit equivalence relation $\cR(G \actson Y)$.

\item The equivalence relation $\cR_0$ is implemented by an essentially free pmp action of a countable group $\Lambda_0 \actson (X_0,\mu_0)$. The action $G \actson Y$ is induced from $G_1 \actson Y_1$ and there exists a finite normal subgroup $\Sigma < G_1$ such that $G_1/\Sigma \actson Y_1/\Sigma$ is conjugate with the action $\Lambda_0 \wr_I \Gamma \actson X_0^I$. Moreover, $t = |\Sigma| \, [G:G_1]$.
\end{itemlist}
\end{theorem}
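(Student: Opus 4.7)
The implication from the second statement to the first is routine: induction preserves stable orbit equivalence with compression $[G:G_1]^{-1}$, quotienting by a free action of a finite normal subgroup contributes a further factor $|\Sigma|^{-1}$, and a conjugacy directly identifies the orbit equivalence relations. So the content lies in the converse. The starting point is Lemma \ref{lem.correspondence} applied to the given stable isomorphism between $\cR := \cR_0 \wr_I \Gamma$ and $\cR(G \actson Y)$: one obtains a $1$-cocycle $\om : \cR \to G$ whose skew product $\cR_\om$ is of type~I, together with a $G$-equivariant nonsingular isomorphism of $(X \times G)/\cR_\om$ with $Y$, where $(X,\mu) = (X_0,\mu_0)^I$. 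The task reduces to describing $\om$ up to the similarity of Lemma \ref{lem.correspondence}.

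My plan is to exploit that $\cR$ is generated by two distinguished subrelations: the Bernoulli orbit equivalence relation $\cR_\Gamma$ of the generalized Bernoulli action $\Gamma \actson X$, and the product equivalence relation $\cR_0^{(I)}$. First, I restrict $\om$ to $\cR_\Gamma$. Under either rigidity hypothesis in point~(1), Popa's cocycle superrigidity theorems \cite{Pop05b,Pop06a} apply to this generalized Bernoulli action, yielding a group homomorphism $\delta : \Gamma \to G$ and a Borel map $\vphi : X \to G$ untwisting $\om|_{\cR_\Gamma}$. Replacing $(\om,\theta)$ by a similar pair, I may assume $\om(g \cdot x, x) = \delta(g)$ outright. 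The cocycle identity on $\cR$ then imposes the $\Gamma$-equivariance
$$\om(g \cdot x, g \cdot y) = \delta(g) \, \om(x,y) \, \delta(g)^{-1} \quad \text{for } (x,y) \in \cR_0^{(I)}, \; g \in \Gamma.$$
For each $i \in I$, consider the further sub-subrelation $\cR_0^{(i)} \subset \cR_0^{(I)}$ of pairs that differ only at coordinate~$i$. Using the assumption that $\Stab i$ acts with infinite orbits on $I \setminus \{i\}$, combined with a malleability/spectral gap argument in the spirit of \cite{PV06}, I aim to prove that $\om|_{\cR_0^{(i)}}$ essentially depends only on the $i$-th coordinate, and hence factors through a cocycle $\om_i : \cR_0 \to G$. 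The $\Gamma$-equivariance above then forces $\om_{g \cdot i} = \delta(g) \, \om_i \, \delta(g)^{-1}$, so all $\om_i$ are determined by a single $\om_0 := \om_{i_0}$ together with $\delta$, and $\om_0$ takes values in the centralizer $C_G(\delta(\Stab i_0))$.

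To finish, essential freeness of $G \actson Y$, expressed through type~I of $\cR_\om$, forces $\om_0$ to be injective in the sense that distinct $\cR_0$-classes produce distinct values. Hence $\Lambda_0 := \om_0(\cR_0) \subset G$ is a countable subgroup acting freely on $X_0$ by translation and implementing $\cR_0$. The reassembled cocycle $\om$ then coincides with the one induced by the natural homomorphism $\delta_1 : \Lambda_0 \wr_I \Gamma \to G$ built from $\delta$ and the inclusion $\Lambda_0 < G$. Setting $G_1 := \delta_1(\Lambda_0 \wr_I \Gamma)$ and $\Sigma := \Ker \delta_1$, Remark \ref{rem.induced} identifies $G \actson Y$ as induced from $G_1 \actson Y_1$, with $G_1/\Sigma \actson Y_1/\Sigma$ conjugate to $\Lambda_0 \wr_I \Gamma \actson X_0^I$. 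The icc hypothesis on $\Gamma$ together with the assumption that every nontrivial $g \in \Gamma$ moves infinitely many $i \in I$ will force $\Sigma$ to be finite, and tracking compression constants yields $t = |\Sigma| \, [G : G_1]$. The hard part is the coordinatewise rigidity step: proving that $\om|_{\cR_0^{(i)}}$ really only depends on the $i$-th coordinate. This is where the infinite-orbit stabilizer condition enters essentially, and it will require adapting the spectral gap or relative property~(T) techniques of \cite{PV06} to the wreath product setting, since a generic pair in $\cR_0^{(i)}$ only differs at one coordinate but still sees the full correlation structure of $\om$ with the ``frozen'' coordinates $j \neq i$.
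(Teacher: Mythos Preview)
Your outline correctly identifies the opening moves: apply Lemma~\ref{lem.correspondence}, untwist $\om|_{\cR_\Gamma}$ via Popa's cocycle superrigidity, and reduce $\om|_{\cR_0^{(i)}}$ to a cocycle $\om_i : \cR_0 \to G$ depending only on coordinate~$i$. These match steps~1--3 of the paper. But you misidentify the hard step, and the remainder of your argument contains a genuine gap.

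The claim ``type~I of $\cR_\om$ forces $\om_0$ to be injective, hence $\Lambda_0 := \om_0(\cR_0) \subset G$ acts freely on $X_0$ implementing $\cR_0$'' is false. Type~I does give that $\om_0$ has trivial kernel (this is the paper's step~4, using that $\Gamma$ is icc). But trivial kernel does \emph{not} imply that the group $\Lambda$ generated by the essential range of $\om_0$ acts freely on $X_0$: the skew product $(\cR_0)_{\om_0}$ on $X_0 \times \Lambda$ may have a fundamental domain strictly larger than $X_0 \times \{e\}$. Concretely, take $\cR_0 = \cR(\Z/2\Z \actson \{0,1\})$ and $\om_0$ the cocycle with values in $\Z/4\Z$ sending the nontrivial flip to $1$; then $\Lambda = \Z/4\Z$ cannot implement $\cR_0$ freely. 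This is exactly the phenomenon in Remark~\ref{rem.full-description}, and it occurs inside actual stable orbit equivalences covered by the theorem. Relatedly, your $\Sigma := \Ker \delta_1$ lives in $\Lambda_0 \wr_I \Gamma$, not in $G_1$, so ``$G_1/\Sigma$'' is not defined.

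What the paper actually does is set $\Lambda = \langle\text{range}(\om_0)\rangle$, build $\delta : \Lambda \wr_I \Gamma \to G$, and study $\Sigma_2 := \Ker\delta$. Step~5 (using your ``infinitely many moved~$i$'' hypothesis) shows $\Sigma_2 \subset \Lambda^{(I)}$, but this alone says nothing about finiteness. The genuinely hard step is step~9: proving there is a \emph{finite} subgroup $\Lambda_2 < \Lambda$ with $\Sigma_2$ of finite index in $\Lambda_2^{(I)}$. This is done by passing to von Neumann algebras, viewing $L(\cR_0^{(I)})$ as a noncommutative Bernoulli shift $N_0^I$, and invoking cocycle superrigidity for the \emph{noncommutative} Bernoulli action \cite{Pop05a,VV14} to straighten a family of partial isometries. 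Only after this can one set $\Lambda_0 = \Lambda/\Lambda_2$ and $\Sigma = \delta(\Lambda_2^{(I)}) \lhd G_1$, and then (step~11) verify that $\Lambda_0$ acts freely on $X_0$ using that the fundamental domain has \emph{finite} measure~$t$. Your proposal omits this entire mechanism; the coordinatewise rigidity you flag as ``the hard part'' is in fact a short weak-mixing argument (step~2).
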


Theorem \ref{thm.main-wreath} should be compared with \cite[Theorem B]{KV15} in which all group measure space decompositions of $L(\cR_0 \wr_\Gamma (\Gamma \times \Gamma))$ were determined, under the hypothesis that $\cR_0$ is amenable and $\Gamma$ is a torsion free nonelementary hyperbolic group. So on the one hand, the hypotheses of \cite{KV15} are much more strict, and in particular do not allow for amplifications by $t > 0$, but on the other hand, \cite{KV15} describes all W$^*$-equivalent group actions, while we only describe stably orbit equivalent actions.

\begin{proof}[{Proof of Theorem \ref{thm.main-wreath}}]
It is trivial that the second statement implies the first, canonically. So we assume that the first statement holds. Write $\cR = \cR_0 \wr_I \Gamma$ and $(X,\mu) = (X_0,\mu_0)^I$. Since $\cR$ is ergodic, also $G \actson Y$ is ergodic. Since $\mu$ is $\cR$-invariant, we may and will assume that $\eta$ is a $G$-invariant probability measure. We are given a stable orbit equivalence between $\cR$ and $\cR(G \actson Y)$ with compression constant $t$. By Lemma \ref{lem.correspondence}, we find a cocycle $\om : \cR \to G$ such that the skew product equivalence relation $\cR_\om$ on $X \times G$ admits a fundamental domain $D \subset X \times G$ of measure $t$ and the resulting action of $G$ on $(X \times G)/\cR_\om$ by right translation in the second coordinate is isomorphic with $G \actson Y$.

For ease of reference, we number the steps in the proof.

1. By Popa's cocycle superrigidity of the (generalized) Bernoulli action $\Gamma \actson (X,\mu)$, proven in \cite[Theorem 0.1]{Pop05b} under the $w$-rigidity assumption and in \cite[Theorem 1.1]{Pop06a} under the spectral gap rigidity assumption, we may assume that $\om(g \cdot x,x) = \delta(g)$ for all $g \in \Gamma$ and a.e.\ $x \in X$, where $\delta : \Gamma \to G$ is a group homomorphism. Since $\cR_\om$ is of type~I, also its restriction to $X \times \{e\}$ is of type~I, so that $\Ker \delta$ must be a finite normal subgroup of $\Gamma$. Since we assumed that $\Gamma$ is icc, $\Ker \delta = \{e\}$.

2. For every $i \in I$, denote by $\cR_i \subset \cR$ the equivalence relation $\cR_0$ in coordinate $i$. More precisely: $(x,y) \in \cR_i$ if and only if $x_j = y_j$ for every $j \neq i$ and $(x_i,y_i) \in \cR_0$. We prove that $\om(x,y) = \om_i(x_i,y_i)$ for a.e.\ $(x,y) \in \cR_i$, where $\om_i : \cR_0 \to G$ is a cocycle.

Denote by $\cB_i$ the $\sigma$-algebra of Borel subsets of $(X,\mu)$ of the form $\{x \in X \mid x_i \in \cU\}$, where $\cU \subset X_0$ is a Borel set. Fix $i \in I$. Let $\vphi \in [[\cR_i]]$ with the domain and range of $\vphi$ belonging to $\cB_i$. Every $g \in \Stab i$ leaves the domain and the range of $\vphi$ globally invariant and $\vphi(g \cdot x) = g \cdot \vphi(x)$ for all $x \in D(\vphi)$. Define $F : D(\vphi) \to G : F(x) = \om(\vphi(x),x)$. It follows that $F(g \cdot x) = \delta(g) F(x) \delta(g)^{-1}$ for all $g \in \Stab i$ and a.e.\ $x \in D(\vphi)$. Since $\Stab i$ acts with infinite orbits on $I \setminus \{i\}$, the action of $\Stab i$ on $(X_0,\mu_0)^{I \setminus \{i\}}$ is weakly mixing (see e.g.\ \cite[Proposition 2.3]{PV06}) and it follows that $F$ only depends on the coordinate $x_i$. This precisely means that 2 is proven.

3. Denote by $\Lambda_i < G$ the subgroup generated by the essential range of $\om|_{\cR_i}$. Since $\om(x,y) = \om_i(x_i,y_i)$ when $(x,y) \in \cR_i$, the cocycle relation implies that the subgroups $\Lambda_i < G$ commute. Since $\om(g \cdot x , g \cdot y) = \delta(g) \om(x,y) \delta(g)^{-1}$, we get that $\Lambda_{g \cdot i} = \delta(g) \Lambda_i \delta(g)^{-1}$ for all $g \in \Gamma$ and $i \in I$, and we get that $\Lambda_i$ commutes with $\delta(\Stab i)$ for all $i \in I$. Fix $i_0 \in I$ and write $\Lambda = \Lambda_{i_0}$. Define $G_2 = \Lambda \wr_I \Gamma$ and denote, for every $i \in I$, by $\pi_i : \Lambda \to \Lambda^{(I)} \subset G_2$ the embedding as $i$'th direct summand. Since $\Gamma \actson I$ is transitive, the group homomorphism $\delta : \Gamma \to G$ can be uniquely extended to a group homomorphism $\delta : G_2 \to G$ satisfying $\delta(\pi_{i_0}(\lambda)) = \lambda$ for all $\lambda \in \Lambda$. We also find a unique $1$-cocycle $\om_2 : \cR \to G_2$ satisfying
$$\om_2(g \cdot x,y) = g \;\;\text{if $g \in \Gamma$, and}\quad \om_2(x,y) = \pi_{i_0}(\om_{i_0}(x_{i_0},y_{i_0})) \;\;\text{if $(x,y) \in \cR_{i_0}$.}$$
By construction, $\om = \delta \circ \om_2$. Also note that by construction,
\begin{multline}\label{eq.formula-om-2}
\om_2(x,y) = g a \quad\text{with $g \in \Gamma$ and $a \in \Lambda^{(I)}$ if and only if}\\
x_{g \cdot i} = y_i \;\;\text{for all but finitely many $i \in I$ and}\;\; a_i = \om_{i_0}(x_{g \cdot i},y_i) \;\;\text{for all $i \in I$.}
\end{multline}

4. Define the subequivalence relation $\cK \subset \cR$ as the kernel of $\om$. We prove that $\cK$ is trivial a.e. Denote by $\Delta = \{(x,x) \mid x \in X\}$ the trivial equivalence relation on $(X,\mu)$. We have to prove that $\cK = \Delta$, up to measure zero. Since $\cR_\om$ is of type~I, also its restriction to $X \times \{e\}$ is of type~I, meaning that the pmp equivalence relation $\cK$ is of type~I and thus has finite orbits a.e. Since $(g \cdot x, g \cdot y) \in \cK$ whenever $(x,y) \in \cK$ and $g \in \Gamma$, the function $x \mapsto |\cK \cdot x|$ is $\Gamma$-invariant and hence constant a.e. So, there exists an $N \in \N$ such that $|\cK \cdot x| = N$ for a.e.\ $x \in X$. It follows that $\cK \subset \cR$ is a subset of finite measure, so that $1_\cK \in L^2(\cR)$.

Defining the measure preserving transformations $\si_g : \cR \to \cR : \si_g(x,y) = (g \cdot x, g \cdot y)$, we have that $\si_g(\cK) = \cK$ for all $g \in \Gamma$. So, $1_\cK$ is invariant under the unitary representation $\pi$ of $\Gamma$ on $L^2(\cR)$ defined by $\pi_g(\xi) = \xi \circ \si_g^{-1}$. Denote by $\Delta_0 = \{(x,x) \mid x \in X_0\}$ the diagonal of $\cR_0$. From the definition of $\cR$ as a wreath product, it follows that $\pi$ is unitarily conjugate to the tensor product of the representations
$$\Ad : \Gamma \actson \ell^2(\Gamma) : (\Ad_g \xi)(h) = \xi(g^{-1} h g) \quad\text{and}\quad \pi_1 : \Gamma \actson \bigotimes_{i \in I} (L^2(\cR_0),1_{\Delta_0}) \; ,$$
where $\pi_1$ acts by permuting the tensor factors of the infinite tensor product of the Hilbert space $L^2(\cR_0)$ w.r.t.\ the canonical unit vector $1_{\Delta_0}$. Since $\Gamma$ is icc and since $\Gamma \actson I$ has infinite orbits, the only $\Gamma$-invariant vectors for $\Ad \ot \pi_1$ are the multiples of $\delta_e \ot 1$. It follows that the only $\Gamma$-invariant vectors in $L^2(\cR)$ are the multiples of $1_\Delta$. Since $1_\cK$ is $\Gamma$-invariant, we conclude that $\cK = \Delta$, up to measure zero.

5. In 3, we defined the group homomorphism $\delta : G_2 \to G$. Put $\Sigma_2 = \Ker \delta$. We prove that $\Sigma_2 \subset \Lambda^{(I)}$. Assume that $g a \in \Sigma_2$ with $g \in \Gamma \setminus \{e\}$ and $a \in \Lambda^{(I)}$. Take a finite subset $J \subset I$ such that $a \in \Lambda^{J}$. Since there are infinitely many $i \in I$ with $g \cdot i \neq i$, we can choose $i \in I \setminus J$ such that $g \cdot i \neq i$. Put $j = g \cdot i$. Since $\Sigma_2$ is a normal subgroup of $G_2$, for every $\lambda \in \Lambda$, we have that $\pi_i(\lambda) g a \pi_i(\lambda)^{-1} \in \Sigma_2$. Since $i \not\in J$, we have
$$\pi_i(\lambda) g a \pi_i(\lambda)^{-1} = \pi_i(\lambda) \, \pi_j(\lambda^{-1}) \, g a \; ,$$
so that $\pi_i(\lambda) \, \pi_j(\lambda^{-1}) \in \Sigma_2$ for all $\lambda \in \Lambda$. Denote $\om_0 = \om_{i_0} : \cR_0 \to \Lambda$. Since $\cR_0$ is a nontrivial equivalence relation, we can choose $\vphi \in [[\cR_0]]$ and $\lambda \in \Lambda$ such that the domain $D(\vphi)$ is nonnegligible, $\vphi(x) \neq x$ and $\om_0(\vphi(x),x) = \lambda$ for all $x \in D(\vphi)$. Define $\psi \in [[\cR]]$ with domain consisting of all $x \in X$ such that $x_i \in D(\vphi)$ and $x_j \in R(\vphi)$, and
$$\psi(x)_k = \begin{cases} \vphi(x_i) &\;\;\text{if $k=i$,}\\ \vphi^{-1}(x_j) &\;\;\text{if $k = j$,}\\ x_k &\;\;\text{if $k \not\in \{i,j\}$.}\end{cases}$$
By \eqref{eq.formula-om-2}, $\om_2(\psi(x),x) = \pi_i(\lambda) \, \pi_j(\lambda)^{-1}$ for all $x \in D(\psi)$. Thus, $\om(\psi(x),x) = \delta(\om_2(\psi(x),x)) = e$ for all $x \in D(\psi)$. By 4, we conclude that $\psi(x) = x$ for a.e.\ $x \in D(\psi)$, contradicting our choice of $\vphi$. This contradiction concludes the proof of 5.

6. Define $G_1 = \delta(G_2)$ and define the normal subgroup $H \lhd G_1$ by $H := \delta(\Lambda^{(I)})$. Since by 5, we have $\Sigma_2 \subset \Lambda^{(I)}$, we can view $G_1$ as the semidirect product of $H$ and $\delta(\Gamma)$. Also, by construction, $H$ is the subgroup of $G$ generated by the commuting subgroups $\Lambda_i \subset G$. Denote by $\cS \subset \cR$ the subequivalence relation $\cR_0^{(I)}$. So, $(x,y) \in \cS$ if and only if there exists a finite subset $J \subset I$ such that $x_i = y_i$ for all $i \not\in J$ and $(x_i,y_i) \in \cR_0$ for all $i \in J$. We prove that $\om^{-1}(H) = \cS$ up to measure zero.

Since $\om(\cR_i) \subset \Lambda_i$, we have $\om(\cS) \subset H$. Conversely, assume that $(x,y) \in \cR$ and $\om(x,y) \in H$. Then, $\om_2(x,y) \in \Lambda^{(I)} \, \Sigma_2 = \Lambda^{(I)}$, by 5. By the formula for $\om_2$ in \eqref{eq.formula-om-2}, this means that $(x,y) \in \cS$.

7. Since $\om(\cR) \subset G_1$ and since $\cR_\om$ admits a fundamental domain of finite measure, we have that $G_1 < G$ is of finite index. By Remark \ref{rem.induced}, we get that $G \actson Y$ is induced from $G_1 \actson Y_1$, where the action $G_1 \actson Y_1$ is isomorphic with the action $G_1 \actson (X \times G_1)/\cR_\om$ by right translation in the second variable.

8. We prove that the skew product equivalence relation $\cS_\om$ on $X \times H$ admits a fundamental domain of finite measure and that such a fundamental domain is also a fundamental domain for $\cR_\om$ on $X \times G_1$. Once this statement is proven, we will assume that the fundamental domain $D$ chosen at the beginning of the proof is a subset of $X \times H$ and is, at the same time, a fundamental domain for $\cS_\om$ on $X \times H$.

Since $\cR_\om$ admits a fundamental domain, a fortiori $\cS_\om$ admits a fundamental domain $F \subset X \times H$. We claim that the restriction of $\cR_\om$ to $F$ is trivial. Indeed, if $(x,h), (x',h') \in F$ and $(x,h) \sim_{\cR_\om} (x',h')$, we have $(x,x') \in \cR$ and $h = \om(x,x') h'$, so that $\om(x,x') \in H$. By 6, it follows that $(x,x') \in \cS$, so that $(x,h) \sim_{\cS_\om} (x',h')$. Since $F$ is a fundamental domain for $\cS_\om$, we conclude that $(x,h) = (x',h')$ and the claim is proven. To prove that $F$ is a fundamental domain for $\cR_\om$, it suffices to prove that a.e.\ point $(x,\zeta) \in X \times G_1$ is equivalent (in $\cR_\om$) with a point in $F$. Writing $\zeta = \delta(g) h$ with $g \in \Gamma$ and $h \in H$, we have that $(x,\zeta) \sim_{\cR_\om} (g^{-1} \cdot x, h)$. The latter is equivalent (in $\cS_\om$) with a point in $F$, so that the proof of 7 is complete.

9. We now prove in a rather indirect way that there exists a finite subgroup $\Lambda_2 \subset \Lambda$ such that $\Sigma_2$ is a finite index subgroup of $\Lambda_2^{(I)}$.

Denote by $\cS^\infty$ the amplification of $\cS$ to $X \times H$ with $(x,h) \sim_{\cS^\infty} (x',h')$ if and only if $(x,x') \in \cS$. Identifying $D \cong (X \times H)/\cS_\om \cong (X \times G_1)/\cR_\om \cong Y_1$, the resulting isomorphism $\Delta : D \to Y_1$ is an orbit equivalence between the restriction $\cS_1$ of $\cS^\infty$ to $D$ and $\cR(H \actson Y_1)$, cf.\ Lemma \ref{lem.correspondence}. Consider the action of $\Gamma$ on $X \times H$ given by $g \cdot (x,h) = (g \cdot x, \delta(g) h \delta(g)^{-1})$. Note that each transformation $(x,h) \mapsto g \cdot (x,h)$ is an automorphism of $\cS_\om$. We thus have a well defined transformation $\be_g$ of $Y_1 = (X \times H)/\cS_\om$. When identifying $(X \times H)/\cS_\om = (X \times G_1)/\cR_\om$, the transformation $\be_g$ is induced by right translation with $\delta(g)$. Thus, as a transformation of $Y_1$, we get that $\be_g$ is given by the action of $\delta(g) \in G_1$ on $Y_1$. In particular, $\be_g$ is an automorphism of the orbit equivalence relation $\cR(H \actson Y_1)$. Therefore, the transformations $\gamma_g : D \to D : \gamma_g = \Delta^{-1} \circ \be_g \circ \Delta$ define an action of $\Gamma$ by automorphisms of $\cS_1$.

We have defined the transformation $\be_g$ of $(X \times H)/\cS_\om$ as the quotient of the transformation $(x,h) \mapsto g \cdot (x,h)$. Denoting by $q : X \times H \to D$ the map that sends $(x,h) \in X \times H$ to the unique element in $D$ that is equivalent (in $\cS_\om$) with $(x,h)$, the map $(x,h) \mapsto q(x,h)$ is the concrete identification $D \cong (X \times H)/\cS_\om$. We thus find that $\gamma_g(x,h) = q(g \cdot (x,h))$. Writing $\gamma_g(x,h) = (x',h')$, we find that
$$(x',h') \sim_{\cS_\om} g \cdot (x,h) = (g \cdot x, \delta(g) h \delta(g)^{-1}) \; .$$
Denoting $\al_g : X \to X : x \mapsto g \cdot x$, we find in particular that $x' \sim_\cS g \cdot x$, so that $\gamma_g(x,h) \sim_{\cS^\infty} (\al_g(x),h)$ for all $g \in \Gamma$ and $(x,h) \in D$. We conclude that the transformation $\gamma_g \circ (\al_g^{-1} \times \id)$ with domain $(\al_g \times \id)(D)$ and range $D$ belongs to $[[\cS^\infty]]$. We denote this transformation as $\rho_g$. By construction, $\gamma_g = \rho_g \circ (\al_g \times \id)$. Since both $\gamma_g$ and $\al_g$ are group actions, we have
$$\rho_{gh} = \rho_g \circ \bigl((\al_g \times \id) \circ \rho_h \circ (\al_g^{-1} \ot \id)\bigr)$$
for all $g,h \in \Gamma$.

Recasting all this in a von Neumann algebraic language, we consider the von Neumann algebra $N = L(\cS)$ and we view $p = 1_D$ as a projection of finite trace in $L^\infty(X \times H) \subset N \ovt B(\ell^2(H))$.
The orbit equivalence $\Delta$ between $\cS_1$ and $\cR(H \actson Y_1)$ gives rise to a canonical $*$-isomorphism of the associated von Neumann algebras
$$\theta : L^\infty(Y_1) \rtimes H \to p(N \ovt B(\ell^2(H))) p \; .$$
The automorphism $\be_g$ of $\cR(H \actson Y_1)$ corresponds to the canonical automorphism of $L^\infty(Y_1) \rtimes H$ given by the action of $\delta(g) \in G_1$. Denoting by $(u_h)_{h \in H}$ the canonical unitary operators in $L^\infty(Y_1) \rtimes H$, we have that $\be_g(u_h) = u_{\delta(g)h\delta(g)^{-1}}$. Similarly, $\al_g$ corresponds to the canonical automorphism of $N = L(\cS)$ given by the action of $g \in \Gamma$. Writing $N_0 = L(\cR_0)$, we get that $N$ is the infinite tensor product $N_0^I$ with respect to the canonical trace on $L(\cR_0)$. From this point of view, the action $(\al_g)_{g \in \Gamma}$ is the noncommutative generalized Bernoulli action on $N_0^I$.

The elements $\rho_g \in [[\cS^\infty]]$ provide partial isometries $v_g \in N \ovt B(\ell^2(H))$ with $v_g^* v_g = (\al_g \ot \id)(p)$ and $v_g v_g^* = p$. Also, $v_{gh} = v_g (\al_g \ot \id)(v_h)$ and the automorphism $\gamma_g$ of $p(N \ovt B(\ell^2(H)))p$ is given by $\gamma_g = (\Ad v_g) \circ (\al_g \ot \id)$. We have $\theta \circ \be_g = \gamma_g \circ \theta$.

Also the noncommutative generalized Bernoulli action $\Gamma \actson N_0^I$ satisfies Popa's cocycle superrigidity: see \cite[Corollary 0.4]{Pop05a} for this result under the $w$-rigidity assumption and see \cite[Theorem 7.1]{VV14} for a proof under the spectral gap assumptions. So, there exists an integer $n \in \N$, an element $w \in N \ovt B(\C^n,H)$ and a unitary representation $V : \Gamma \to \cU(\C^n)$ such that $ww^* = p$, $w^* w = 1 \ot 1$ and $v_g = w (1 \ot V(g)) (\al_g \ot \id)(w^*)$ for all $g \in \Gamma$.

Since $w w^* = p$ and $w^* w = 1 \ot 1$, the map $\Ad w^*$ is a $*$-isomorphism from $p(N \ovt B(\ell^2(H))) p$ to $N \ot M_n(\C)$. We can thus define the $*$-isomorphism
$$\Psi : L^\infty(Y_1) \rtimes H \to N \ovt M_n(\C) : \Psi = (\Ad w^*) \circ \theta \; .$$
We get that $\Psi \circ \beta_g = (\al_g \ot \Ad V(g)) \circ \Psi$. Denote by $\Pi_i : N_0 \to N$ the embedding as the $i$'th tensor factor. Since $\Lambda_i$ commutes with $\delta(\Stab i)$, we find for every $\lambda \in \Lambda_i$ that $\Psi(u_\lambda)$ is invariant under $\al_g \ot \Ad V(g)$ for all $g \in \Stab i$. By weak mixing, we conclude that $\Psi(L(\Lambda_i)) \subset \Pi_i(N_0) \ot M_n(\C)$.

Define $\Lambda_2 = \{\lambda \in \Lambda \mid \Psi(u_\lambda) \in 1 \ot M_n(\C)\}$. Since $\Psi$ is a von Neumann algebra embedding, $\Lambda_2$ is a finite group. We prove that $\Sigma_2$ is a finite index subgroup of $\Lambda_2^{(I)}$. Take $a \in \Sigma_2$. Take a finite subset $J = \{i_1,\ldots,i_m\} \subset I$ such that $a \in \Lambda^{J}$. Write $b_k = \delta(\pi_{i_k}(a_{i_k})) \in \Lambda_{i_k}$, so that $e = \delta(a) = b_1 \cdots b_m$. Thus,
$$1 = \Psi(u_e) = \Psi(u_{b_1}) \cdots \Psi(u_{b_m}) \; .$$
By the previous paragraph, each $\Psi(u_{b_k})$ is a unitary in $\Pi_{i_k}(N_0) \ot M_n(\C)$ and the indices $i_k$ are distinct. If such a product of unitaries is equal to $1$, we must have that $\Psi(u_{b_k}) \in 1 \ot M_n(\C)$ for every $k$. Taking $g_k \in \Gamma$ such that $i_k = g_k \cdot i_0$, this means that $b_k \in \delta(g_k) \Lambda_2 \delta(g_k)^{-1}$ and thus $a_{i_k} \in \Lambda_2$ for all $k$. We have thus proven that $\Sigma_2 \subset \Lambda_2^{(I)}$.

Noting that $\delta(\Lambda_2^{(I)}) \subset \{ b \in H \mid \Psi(u_b) \in 1 \ot M_n(\C)\}$, it follows that $\delta(\Lambda_2^{(I)})$ is a finite group. This means that $\Sigma_2 = \Ker \delta$ has finite index in $\Lambda_2^{(I)}$.

10. We prove that $\Lambda_2$ is a normal subgroup of $\Lambda$. Denote by $p_i : \Lambda^{(I)} \to \Lambda$ the projection onto the $i$'th coordinate. Since $\pi_{g \cdot i}(a) = \pi_i(g^{-1} \cdot a)$, since the subgroup $\Sigma_2 \subset \Lambda^{(I)}$ is globally invariant under the action of $\Gamma$ and since $\Gamma \actson I$ is transitive, we have that all $p_i(\Sigma_2)$ are equal and we denote this subgroup of $\Lambda$ as $\Lambda_3$. Since $\Sigma_2$ is normal in $\Lambda^{(I)}$, also $\Lambda_3$ is normal in $\Lambda$. By construction, $\Sigma_2 \subset \Lambda_3^{(I)}$. By 9, we get that $\Lambda_3 \subset \Lambda_2$. If $\Lambda_3$ is a proper subgroup of $\Lambda_2$, then $\Lambda_3^{(I)}$ is of infinite index in $\Lambda_2^{(I)}$, contradicting the fact that $\Sigma_2$ has finite index in $\Lambda_2^{(I)}$. So, $\Lambda_3 = \Lambda_2$ and 10 is proven.

11. Define $\Sigma = \delta(\Lambda_2^{(I)})$. Then $\Sigma$ is a finite normal subgroup of $G_1$. Put $\Lambda_0 = \Lambda/\Lambda_2$. Denote by $\rho : G_1 \to G_1 / \Sigma$ the quotient homomorphism. Also consider the natural quotient homomorphism $\psi : G_2 = \Lambda \wr_I \Gamma \to \Lambda_0 \wr_I \Gamma$. By construction, there is a unique group isomorphism $\zeta : \Lambda_0 \wr_I \Gamma \to G_1/\Sigma$ such that $\zeta \circ \psi = \rho \circ \delta$.

Since $\Sigma$ is a finite group, the action of $\Sigma$ on $(X \times G_1)/\cR_\om$ admits a fundamental domain $D_1$, that we can view as an $\cR_\om$-invariant subset $D_1 \subset X \times G_1$. Since $D \subset X \times G_1$ is a fundamental domain for $\cR_\om$, it follows that the image of $D \cap D_1$ in $X \times G_1/\Sigma$ is a fundamental domain of finite measure for $\cR_{\rho \circ \om}$.

Recall that we denoted $\om_0 = \om_{i_0}$. Denoting by $\psi_0 : \Lambda \to \Lambda_0$ the quotient homomorphism, we define the cocycles $\Om_0 : \cR_0 \to \Lambda_0 : \Om_0 = \psi_0 \circ \om_0$ and
$$\Om : \cR \to \Lambda_0 \wr_I \Gamma : \Om = \psi \circ \om_2 = \zeta^{-1} \circ \rho \circ \om \; .$$
Write $G_0 = \Lambda_0 \wr_I \Gamma$. We have seen above that $\cR_{\rho \circ \om}$ admits a fundamental domain of finite measure. So also the skew product equivalence relation $\cR_\Om$ on $X \times G_0$ admits a fundamental domain of finite measure. By construction, the action $G_1/\Sigma \actson Y_1/\Sigma$ is conjugate, under $\zeta$, with the action of $G_0$ on $(X \times G_0)/\cR_\Om$ by right translation in the second variable.

Since $\Om = \psi \circ \om_2$, it follows from \eqref{eq.formula-om-2} that $\Om(x,y) = g a$ with $g \in \Gamma$ and $a \in \Lambda_0^{(I)}$ iff $x_{g \cdot i} = y_i$ for all but finitely many $i \in I$ and $a_i = \Om_0(x_{g \cdot i} , y_i)$ for all $i \in I$. Since $\cR_\Om$ is of type~I, also its restriction to $X \times \{e\}$ is of type~I, meaning that the kernel of $\Om$ is of type~I. But the kernel of $\Om$ is the pmp equivalence relation on $(X,\mu)$ that is given as the infinite product of the kernel of $\Om_0$. This can only be of type~I if it is trivial.

So, $\Om_0$ has trivial kernel. We claim that the skew product $(\cR_0)_{\Om_0}$ on $X_0 \times \Lambda_0$ is of type~I. Since $\cR_\Om$ is of type~I, also the subequivalence relation $(\cR_{i_0})_\Om$ and its restriction to $X \times \pi_{i_0}(\Lambda_0)$ are of type~I. Whenever $\cU \subset X \times \pi_{i_0}(\Lambda_0)$ is a Borel set such that the restriction of $(\cR_{i_0})_\Om$ to $\cU$ is trivial, we define for every $x \in X_0^{I \setminus \{i_0\}}$ the Borel set $\cU_x \subset X_0 \times \Lambda_0$ such that $(x',\lambda) \in \cU_x$ iff $(y,\pi_{i_0}(\lambda)) \in \cU$ with $y \in X$ given by $y_{i_0} = x'$ and $y_i = x_i$ if $i \neq i_0$. It follows that the restriction of $(\cR_0)_{\Om_0}$ to $\cU_x$ is trivial. Since this holds for all choices of $\cU$ and $x$, the claim is proven.

Let $Z_0 \subset X_0 \times \Lambda_0$ be a fundamental domain for $(\cR_0)_{\Om_0}$. Since $\Om_0$ has trivial kernel, we may assume that $X_0 \times \{e\} \subset Z_0$. Defining $Z \subset X \times \Lambda_0^{(I)}$ as the set of $(x,a)$ such that $(x_i,a_i) \in Z_0$ for all $i \in I$, the explicit form of $\Om$ implies that $Z$ is a fundamental domain of $\cR_\Om$. So, $Z$ must have finite measure. If $Z_0 \setminus (X_0 \times \{e\})$ has positive measure, then $Z$ has infinite measure. We have thus proven that $X_0 \times \{e\}$ is a fundamental domain of $(\cR_0)_{\Om_0}$. This precisely means that we have found an essentially free pmp action $\Lambda_0 \actson X_0$ such that $\cR_0 = \cR(\Lambda_0 \actson X_0)$ and such that $\Om_0(\lambda \cdot x , x) = \lambda$ for all $x \in X_0$, $\lambda \in \Lambda_0$.

Then, by construction, the action of $G_0$ on $(X \times G_0)/\cR_\Om$ is isomorphic to the natural action of $G_0 = \Lambda_0 \wr_I \Gamma$ on $X_0^I$. This concludes the proof of the theorem.
\end{proof}

Theorem \ref{thm.main-wreath} describes exactly which essentially free group actions $G \actson (Y,\eta)$ have an orbit equivalence relation that is isomorphic to $(\cR_0 \wr_I \Gamma)^t$ with $t > 0$ finite. In Remark \ref{rem.full-description} below, we make this description even more explicit. In the next proposition, we provide a description of which infinite measure preserving actions $G \actson (Y,\eta)$ are orbit equivalent with $(\cR_0 \wr_I \Gamma)^\infty$. This description is necessarily more subtle and we need the following notation.

Let $(Z_0,\mu_0)$ be a standard $\sigma$-finite measure space and $X_0 \subset Z_0$ a Borel set with $\mu_0(X_0)=1$. Let $I$ be a countable set and consider the \emph{restricted direct product}
$$Z = \resprod_{i \in I} (Z_0,X_0) = \{x \in Z_0^I \mid x_i \in X_0 \;\;\text{for all but finitely many $i \in I$}\;\} \; .$$
Then the product measure $\mu = \mu_0^I$ is a well defined $\sigma$-finite measure on $Z$.

Whenever $\Gamma \actson I$ is an action of a countable group $\Gamma$ on the countable set $I$ and whenever $\Lambda \actson (Z_0,\mu_0)$ is a measure preserving action, the natural action of $\Lambda \wr_I \Gamma$ on $(Z,\mu)$ is well defined and measure preserving.

\begin{proposition}\label{prop.rigidity-infinite-measure}
Assume that $\Gamma \actson I$ satisfies the assumptions above. Let $\cR_0$ be any countable nontrivial pmp equivalence relation on a standard probability space $(X_0,\mu_0)$.

Let $G \actson (Y,\eta)$ be an essentially free action of a countable group $G$ on a standard $\sigma$-finite measure space $(Y,\eta)$ with $\eta(Y) = +\infty$.

Then $(\cR_0 \wr_I \Gamma)^\infty$ is isomorphic with the orbit equivalence relation $\cR(G \actson Y)$ if and only if the following holds: there exists an essentially free, measure preserving action of a countable group $\Lambda$ on a standard $\sigma$-finite measure space $(Z_0,\mu_0)$, an embedding of $X_0$ as a Borel set of measure $1$ in $Z_0$, a subgroup $\Lambda_2$ of the center of $\Lambda$ and a globally $\Gamma$-invariant subgroup $\Sigma_2 \subset \Lambda_2^{(I)}$ such that
\begin{itemlist}
\item the restriction of the orbit equivalence relation $\cR(\Lambda \actson Z_0)$ to $X_0$ equals $\cR_0$ and $\Lambda \cdot X_0 = Z_0$~;
\item considering the natural action of $G_0 = \Lambda \wr_I \Gamma$ on $(Z,\mu) = \prod'_I (Z_0,X_0)$, we have that $\Sigma_2 \actson (Z,\mu)$ admits a fundamental domain of measure $t \in [1,+\infty]$ containing $X_0^I$~;
\item the action $G \actson Y$ is induced from an action $G_1 \actson Y_1$ that is conjugate with the action $G_0/\Sigma_2 \actson Z/\Sigma_2$~;
\item we have $t \, [G:G_1] = +\infty$.
\end{itemlist}
\end{proposition}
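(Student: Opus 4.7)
For the ``if'' direction, my plan is to verify directly that the described construction produces a stable orbit equivalence. Using $\cR(\Lambda \actson Z_0)|_{X_0} = \cR_0$ and $\Lambda \cdot X_0 = Z_0$, the restriction of $\cR(G_0 \actson Z)$ to $X_0^I$ equals $\cR_0 \wr_I \Gamma$. A fundamental domain $F \subset Z$ for $\Sigma_2$ of measure $t$ containing $X_0^I$ then gives $\cR(G_0/\Sigma_2 \actson Z/\Sigma_2) \cong (\cR_0 \wr_I \Gamma)^t$, and inducing from $G_1$ to $G$ amplifies this by $[G:G_1]$, producing $\cR(G \actson Y) \cong (\cR_0 \wr_I \Gamma)^{t \cdot [G:G_1]} = (\cR_0 \wr_I \Gamma)^\infty$.

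For the ``only if'' direction, the plan is to adapt the proof of Theorem~\ref{thm.main-wreath} to the infinite measure setting. The crucial observation is that steps~1--8 of that proof nowhere use finiteness of the fundamental domain of $\cR_\om$ and carry over verbatim. They produce: an injective $\delta : \Gamma \to G$ with $\om(g \cdot x, x) = \delta(g)$; cocycles $\om_i : \cR_0 \to G$ satisfying $\om|_{\cR_i}(x,y) = \om_i(x_i,y_i)$ and $\om_{g \cdot i} = \delta(g)\om_i\delta(g)^{-1}$; the possibly infinite subgroup $\Lambda := \Lambda_{i_0} < G$ generated by the essential range of $\om_0 := \om_{i_0}$; an extended homomorphism $\delta : G_2 := \Lambda \wr_I \Gamma \to G$ with $\Sigma_2 := \Ker \delta \subset \Lambda^{(I)}$; a cocycle $\om_2 : \cR \to G_2$ with $\om = \delta \circ \om_2$ and trivial kernel on $\cR$; the identity $\om^{-1}(H) = \cS = \cR_0^{(I)}$ with $H = \delta(\Lambda^{(I)})$; and, by Remark~\ref{rem.induced}, that $G \actson Y$ is induced from $G_1 := \delta(G_2) \actson Y_1$, where now $[G:G_1]$ may be infinite.

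The new construction is the space $Z_0$ itself. Since $\om_0$ has trivial kernel on $\cR_0$ (inherited from the triviality of $\Ker \om_2$ on $\cR$) and $(\cR_0)_{\om_0}$ is of type~I (as a restriction of the type~I relation $\cR_\om$), I set $Z_0 := (X_0 \times \Lambda)/(\cR_0)_{\om_0}$ with the essentially free $\Lambda$-action by right translation in the second coordinate and the Borel embedding $X_0 \hookrightarrow Z_0 : x \mapsto [(x,e)]$; the conditions $\Lambda \cdot X_0 = Z_0$ and $\cR(\Lambda \actson Z_0)|_{X_0} = \cR_0$ are immediate. Globally, $Z \cong \resprod_{i \in I}(Z_0,X_0) \cong (X \times \Lambda^{(I)})/\cS_{\om_2|_\cS}$, the $G_2$-action on $Z$ descends through $\delta$ to an action of $G_1$ on $Z/\Sigma_2 \cong Y_1$, and item~(2) of the proposition follows by transferring the fundamental domain $D \subset X \times G_1$ of $\cR_\om$ to a $\Sigma_2$-fundamental domain in $Z$; its measure $t$ automatically satisfies $t \cdot [G:G_1] = \mu(D) = +\infty$.

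The main obstacle is to show $\Sigma_2 \subset \Lambda_2^{(I)}$ for some subgroup $\Lambda_2$ of the \emph{center} of $\Lambda$. This parallels steps~9--10 of the proof of Theorem~\ref{thm.main-wreath}, where $\Lambda_2$ was shown to be finite and normal via Popa's noncommutative cocycle superrigidity \cite{Pop05a,VV14}. In the infinite-measure setting, Popa's theorem still applies but produces an infinite-dimensional representation $V : \Gamma \to \cU(\cH)$; defining $\Lambda_2 := \{\lambda \in \Lambda \mid \Psi(u_\lambda) \in 1 \ot B(\cH)\}$ gives a possibly infinite subgroup with $\Sigma_2 \subset \Lambda_2^{(I)}$. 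The stronger conclusion of centrality in $\Lambda$, rather than only normality, should follow from the essential freeness of $\Lambda \actson Z_0$: for $\lambda \in \Lambda_2$ and $\mu \in \Lambda_j$ with $j \neq i_0$, the operator $\Psi(u_\mu) \in \Pi_j(N_0) \ot B(\cH)$ commutes with $\Psi(u_\lambda) \in 1 \ot B(\cH)$, so $[\lambda,\mu]$ is forced into $\Sigma_2$; its nontrivial action on $Z$ through the essentially free $\Lambda$-action would contradict triviality of $\Ker \om_2$, and the $\Gamma$-invariance of $\Sigma_2$ propagates this commutation to all of $\Lambda$.
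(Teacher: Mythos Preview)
Your ``if'' direction and the reduction via steps~1--5 of Theorem~\ref{thm.main-wreath} are correct and match the paper's approach, as does your construction of $Z_0$ from the skew product $(\cR_0)_{\om_0}$. (A small correction: steps~6--8 do not all carry over verbatim---step~7 explicitly uses finite measure to conclude $[G:G_1] < \infty$---but you implicitly acknowledge this, and the paper in fact stops at step~5.)

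The genuine gap is your argument for the centrality of $\Lambda_2$. Your sketch argues that for $\lambda \in \Lambda_2 \subset \Lambda_{i_0}$ and $\mu \in \Lambda_j$ with $j \neq i_0$, the elements $\Psi(u_\lambda)$ and $\Psi(u_\mu)$ commute, and infers $[\lambda,\mu] \in \Sigma_2$. But $\Lambda_{i_0}$ and $\Lambda_j$ already commute in $G$ by step~3, so $[\lambda,\mu]=e$ trivially; this says nothing about commutation of $\lambda$ with other elements of $\Lambda = \Lambda_{i_0}$ itself, which is what centrality in $\Lambda$ requires. The appeal to ``$\Gamma$-invariance propagates commutation to all of $\Lambda$'' does not repair this. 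Separately, the invocation of noncommutative cocycle superrigidity in infinite dimensions is delicate: in step~9 of Theorem~\ref{thm.main-wreath} the projection $p = 1_D$ has finite trace, which is precisely what produces a finite-dimensional $V$ and the isomorphism $\Psi$ into $N \ot M_n(\C)$; the infinite-trace analogue you invoke would need independent justification.

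The paper bypasses step~9 entirely with a short elementary argument. Define $\Lambda_2 := p_{i_0}(\Sigma_2)$, where $p_i : \Lambda^{(I)} \to \Lambda$ is the coordinate projection; by $\Gamma$-transitivity and $\Gamma$-invariance of $\Sigma_2$ this equals $p_i(\Sigma_2)$ for every $i$, so automatically $\Sigma_2 \subset \Lambda_2^{(I)}$. For centrality, take $a \in \Sigma_2$, $i \in I$, $b \in \Lambda$. Since $\Sigma_2 \lhd G_2$, the commutator $\pi_i(b)\, a\, \pi_i(b)^{-1} a^{-1}$ lies in $\Sigma_2$; but it is supported only in the $i$-th coordinate and equals $\pi_i(b a_i b^{-1} a_i^{-1})$. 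Now $\delta$ restricted to $\pi_{i_0}(\Lambda)$ is the identity by construction (step~3), so $\Sigma_2 \cap \pi_{i_0}(\Lambda) = \{e\}$, and by $\Gamma$-invariance $\Sigma_2 \cap \pi_i(\Lambda) = \{e\}$ for all $i$. Hence $b a_i b^{-1} a_i^{-1} = e$, i.e.\ every $a_i$ is central in $\Lambda$, giving $\Lambda_2 \subset \cZ(\Lambda)$ without any von Neumann algebra machinery.
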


\begin{proof}
In the constructive direction, we first prove that the action $G_0 \actson (Z,\mu)$ is ergodic. Since $\Lambda \cdot X_0 = Z_0$, we have that $\Lambda^{(I)} \cdot X_0^I = Z$. It thus suffices to prove that the restriction of $\cR(G_0 \actson Z)$ to $X_0^I$ is ergodic. But this restriction is, by construction, equal to $\cR_0 \wr_I \Gamma$, which is ergodic because the action of $\Gamma$ on $X_0^I$ is ergodic. Since $X_0^I$ has measure $1$ and since $\Sigma_2 \actson (Z,\mu)$ admits a fundamental domain of measure $t$, we have
$$(\cR_0 \wr_I \Gamma)^t \cong \cR(G_0/\Sigma_2 \actson Z/\Sigma_2) \cong \cR(G_1 \actson Y_1) \; .$$
It follows that $(\cR_0 \wr_I \Gamma)^{t \, [G:G_1]} \cong \cR(G \actson Y)$ and thus, $(\cR_0 \wr_I \Gamma)^\infty \cong \cR(G \actson Y)$.

Conversely, write $(X,\mu) = (X_0,\mu_0)^I$ and $\cR = \cR_0 \wr_I \Gamma$ and assume that $\cR^\infty \cong \cR(G \actson Y)$. By Lemma \ref{lem.correspondence}, we find a cocycle $\om : \cR \to G$ such that the skew product equivalence relation $\cR_\om$ on $X \times G$ has a fundamental domain of infinite measure and $G \actson Y$ is isomorphic with the action of $G$ on $(X \times G)/\cR_\om$ by right translation in the second variable.

Steps 1 to 5 in the proof of Theorem \ref{thm.main-wreath} do not rely on the finiteness of the measure of the fundamental domain of $\cR_\om$. We thus find a countable group $\Lambda$, a cocycle $\om_0 : \cR_0 \to \Lambda$ and a group homomorphism $\delta : \Lambda \wr_I \Gamma \to G$ such that, writing $G_0 = \Lambda \wr_I \Gamma$, we may assume that $\om = \delta \circ \om_2$ where $\om_2 : \cR_0 \wr_I \Gamma \to G_0$ is the canonical cocycle given by $\om_2(x,y) = g a$ iff $x_{g \cdot i} = y_i$ for all but finitely many $i \in I$ and $a_i = \om_0(x_{g \cdot i},y_i)$ for all $i \in I$. Moreover, the kernel of $\om$ is trivial. Writing $\Sigma_2 = \Ker \delta$, by step~5, we also have that $\Sigma_2 \subset \Lambda^{(I)}$ and that $\Sigma_2$ intersects $\pi_i(\Lambda)$ trivially for every $i \in I$, by construction.

Write $G_1 = \delta(G_0)$. Since $\om(\cR) \subset G_1$, by Remark \ref{rem.induced}, the action $G \actson Y$ is induced from $G_1 \actson Y_1$ in such a way that $G_1 \actson Y_1$ is conjugate with the action of $G_1$ on $(X \times G_1)/\cR_\om$. Let $D_1 \subset X \times G_1$ be a fundamental domain for $\cR_\om$. Put $D_2 = (\id \ot \delta)^{-1}(D_1)$. Then $D_2 \subset X \times G_0$ is a fundamental domain for $\cR_{\om_2}$ and we consider the action $G_0 \actson Z = (X \times G_0)/\cR_{\om_2}$. Choosing a subset $J \subset G_0$ such that the restriction $\delta|_J : J \to G_1$ is a bijection, the image of $D_2 \cap (X \times J)$ in $Z$ is a fundamental domain for the action of $\Sigma_2 = \Ker \delta$ and, by construction, $G_0/\Sigma_2 \actson Z/\Sigma_2$ is conjugate with $G_1 \actson Y_1$.

Since $\cR_{\om_2}$ admits a fundamental domain, repeating the argument at the end of the proof of Theorem \ref{thm.main-wreath} yields that also the skew product equivalence relation $(\cR_0)_{\om_0}$ on $X_0 \times \Lambda$ admits a fundamental domain $Z_0 \subset X_0 \times \Lambda$. Since $\om$ has trivial kernel, also $\om_0$ has trivial kernel and we may choose the fundamental domain $Z_0$ such that $X_0 \times \{e\} \subset Z_0$. We identify $Z_0 = (X_0 \times \Lambda)/\cR_{\om_0}$ and consider the action $\Lambda \actson Z_0$. We view $X_0$ as a subset of $Z_0$. As discussed before the proposition, we have an associated action of $G_0 = \Lambda \wr_I \Gamma$ on the restricted product $Z_1 = \prod'_{I} (Z_0,X_0)$.

Define $Z_2 \subset X \times \Lambda^{(I)}$ by
$$Z_2 = \{(x,a) \in X \times \Lambda^{(I)} \mid (x_i,a_i) \in Z_0 \;\;\text{for all $i \in I$}\;\} \; .$$
Then $Z_2$ is a fundamental domain for $\cR_{\om_2}$. Above we could thus have made the choice $Z = Z_2$ and consider the action $G_0 \actson Z_2 = (X \times G_0)/\cR_{\om_2}$. The tautological map $\Psi : Z_2 \to Z_1 : \Psi(x,a)_i = (x_i,a_i)$ is an isomorphism of measure spaces that conjugates the actions $G_0 \actson Z_2$ and $G_0 \actson Z_1$.

Since $\Sigma_2$ is a normal subgroup of $G_0$, as a subgroup of $\Lambda^{(I)}$, we have that $\Sigma_2$ is globally $\Gamma$-invariant. Denoting by $p_i : \Lambda^{(I)} \to \Lambda$ the projection onto the $i$'th direct summand, it follows that the subgroup $p_i(\Sigma_2) \subset \Lambda$ is independent of $i$. We denote this subgroup as $\Lambda_2$. Let $a \in \Sigma_2$ and $i \in I$. Since $\Sigma_2 \subset \Lambda^{(I)}$ is a normal subgroup, we have $\pi_i(b) a \pi_i(b)^{-1} \in \Sigma_2$ for all $b \in \Lambda$ and thus
$$\pi_i(b a_i b^{-1} a_i^{-1}) = \pi_i(b) a \pi_i(b)^{-1} a^{-1} \in \Sigma_2 \; .$$
Since $\Sigma_2 \cap \pi_i(\Lambda) = \{e\}$, we conclude that $a_i$ commutes with every $b \in \Lambda$. So, $\Lambda_2$ is a subgroup of the center of $\Lambda$. By construction, $\Sigma_2 \subset \Lambda_2^{(I)}$.
\end{proof}

\begin{remark}\label{rem.full-description}
Analyzing the proof of Theorem \ref{thm.main-wreath} and using the last paragraph of the proof of Proposition \ref{prop.rigidity-infinite-measure}, one may give the following more precise description of the action $G_1 \actson Y_1$ that appears in the second point of Theorem \ref{thm.main-wreath}.

So, under the assumptions of Theorem \ref{thm.main-wreath}, an essentially free group action $G \actson (Y,\eta)$ has the property that $(\cR_0 \wr_I \Gamma)^t$ is isomorphic with $\cR(G \actson Y)$ if and only if the following holds. There exists a measure preserving action $\Lambda \actson (Z_0,\mu_0)$ on a standard finite measure space containing $X_0 \subset Z_0$ as a Borel subset of measure $1$ such that the restriction of $\cR(\Lambda \actson Z)$ to $X_0$ equals $\cR_0$. There exists a finite, central subgroup $\Lambda_2 \subset \Lambda$ such that $X_0$ is a fundamental domain for $\Lambda_2 \actson Z_0$. There exists a globally $\Gamma$-invariant finite index subgroup $\Sigma_2 \subset \Lambda_2^{(I)}$ such that, writing $G_0 = \Lambda \wr_I \Gamma$ and $Z = \prod_{I}' (Z_0,X_0)$, the action $G \actson Y$ is conjugate to an induction of the action $G_0/\Sigma_2 \actson Z/\Sigma_2$. Also, $t = [\Lambda_2^{(I)}:\Sigma_2] \, [G:G_1]$.

Note that in this more precise description, we indeed have that $\cR_0 = \cR(\Lambda/\Lambda_2 \actson X_0)$ and that $G_0/\Sigma_2$ admits the finite normal subgroup $\Sigma$ given by $\Sigma = \Lambda_2^{(I)}/\Sigma_2$. Also, $G_0/\Lambda_2^{(I)} \cong (\Lambda/\Lambda_2)^{(I)}$ and $Z/\Sigma \cong X_0^I$, naturally.

It is then easy to see that the finite normal subgroup $\Sigma$ can be ``essential'': the following example provides a group action $G \actson Y$ that itself is not induced from a wreath product action, but with $G/\Sigma \actson Y/\Sigma$ being a wreath product action. Let $\Lambda = \Z/4\Z$ with subgroup $\Lambda_2 = 2 \Z / 4 \Z$. Put $Z_0 = \Lambda$ and $X_0 = \{0,1\}$. Renormalize the counting measure $\mu_0$ so that $\mu_0(X_0) = 1$. Define $\Sigma_2 = \{a \in \Lambda_2^{(I)} \mid \sum_{i \in I} a_i = 0\}$. Define $G = (\Lambda^{(I)}/\Sigma_2) \rtimes \Gamma$, with the two element group $\Sigma = \Lambda_2^{(I)}/\Sigma_2$ as normal subgroup. Let $Z = \prod_I' (Z_0,X_0)$. Then, $G \actson Z/\Sigma_2$ is a weakly mixing action and its quotient by $\Sigma$ is the wreath product action of $(\Lambda/\Lambda_2) \wr_I \Gamma$ on $X_0^I$.
\end{remark}

\end{document}